\numberwithin{equation}{section}
\newtheorem{theorem}{Theorem}[section]
\newtheorem{lemma}[theorem]{Lemma}
\newtheorem{definition}[theorem]{Definition}
\newtheorem{remark}[theorem]{Remark}
\newtheorem{proposition}[theorem]{Proposition}
\makeatletter\setlength{\textwidth}{15.0cm}
\begin{document}

\author{Li-Jun Du, Wan-Tong Li\thanks{%
Corresponding author (wtli@lzu.edu.cn).}~  and Jia-Bing Wang\\
%EndAName
School of Mathematics and Statistics, Lanzhou University,\\
Lanzhou, Gansu 730000, People's Republic of China}

\title{\textbf{Asymptotic Behavior of Traveling Fronts and Entire Solutions for a Periodic Bistable Competition-Diffusion System}}
\date{}

\maketitle

\begin{abstract}
This paper is concerned with a time periodic competition-diffusion system
\begin{equation*}
\begin{cases}
{u_t}={u_{xx}}+u(r_1(t)-a_1(t)u-b_1(t)v),\quad t>0,~x\in \mathbb R,\\
{v_t}=d{v_{xx}}+v(r_2(t)-a_2(t)u-b_2(t)v),\quad t>0,~x\in \mathbb R,
\end{cases}
\end{equation*}
where $u(t,x)$ and $v(t,x)$ denote the densities of two competing species,
$d>0$ is some constant, $r_i(t),a_i(t)$ and $b_i(t)$ are $T-$periodic continuous functions.
Under suitable conditions, it has been confirmed by Bao and Wang [\emph{J. Differential Equations} 255 (2013), 2402-2435] that this system admits a periodic traveling front connecting two \textbf{stable} semi-trivial $T-$periodic solutions $(p(t),0)$ and $(0,q(t))$ associated to the corresponding kinetic system. Assume further that the wave speed is non-zero, we investigate the asymptotic behavior of the periodic \textbf{bistable} traveling front at infinity by a dynamical approach combined with the two-sided Laplace transform method. With these asymptotic properties, we then give some key estimates. Finally, by applying super- and subsolutions technique as well as the comparison principle, we establish the existence and various qualitative properties of \emph{entire solutions} defined for all time and whole space.

\noindent\textbf{ Keywords:} Periodic bistable traveling front; Asymptotic behavior;
Comparison principle; Entire solution.

\textbf{AMS Subject Classification (2000)}: 35K57; 37C65; 92D30.
\end{abstract}

\newpage

%\tableofcontents

\section{Introduction and main results}

In this paper, we study the asymptotic behavior of traveling wave fronts and
entire solutions for the following time periodic Lotka-Volterra competition-diffusion system
\begin{equation}\label{uv-eq}
\begin{cases}
{u_t}={u_{xx}}+u(r_1(t)-a_1(t)u-b_1(t)v),\quad t>0,~x\in \mathbb R,\\
{v_t}=d{v_{xx}}+v(r_2(t)-a_2(t)u-b_2(t)v),\quad t>0,~x\in \mathbb R,
\end{cases}
\end{equation}
where $u=u(t,x)$ and $v=v(t,x)$ denote the densities of two competing species
at time $t>0$ and in location $x\in \mathbb R,$
$d>0$ is the relative diffusive coefficient of the two species,
$r_i$, $a_i$ and $b_i$ are $T-$periodic continuous functions of $t$, $a_i$ and $b_i$ are positive in $[0,T]$, and $\overline{r_i}:=\frac{1}{T}\int_0^T{r_i(t)dt}>0$ with $i=1,2$. Usually, system \eqref{uv-eq} is used to describe the evolution of two competing species which live in a
fluctuating environment, exactly, many physical environmental conditions such
as temperature, humidity, the availability of food, water and other
resources usually vary in time with seasonal or daily variations \cite{zhao2003}.
In particular, we plan to investigate system \eqref{uv-eq} in the periodic framework, which is probably the simplest but nonetheless interesting and realistic case.

Time periodic traveling wave solutions of \eqref{uv-eq} connecting
$(0,q(t))$ and $(p(t),0)$ are classical solutions with the form
$
(u(t,x),v(t,x))=(X(t,x-ct),Y(t,x-ct))
$
satisfying
\begin{equation*}
\begin{aligned}
(X(t+T,z),Y(t+T,z))&=(X(t,z),Y(t,z)),\quad(t,x)\in\mathbb R\times\mathbb R,\\
\lim\limits_{z\to-\infty}(X(t,z),Y(t,z))&=(0,q(t)) \text{ uniformly in } t\in\mathbb R,\\
\lim\limits_{z\to+\infty}(X(t,z),Y(t,z))&=(p(t),0) \text{ uniformly in } t\in\mathbb R,
\end{aligned}
\end{equation*}
where $c\in \mathbb{R}$ is the wave speed, $z=x-ct$ is the co-moving frame coordinate,
$(0,q(t))$ and $(p(t),0)$ are $T-$periodic
solutions of the corresponding kinetic system
\begin{equation}\label{uv-oeq}
\begin{cases}
\frac{du}{dt}=u(r_1(t)-a_1(t)u-b_1(t)v),\\
\frac{dv}{dt}=v(r_2(t)-a_2(t)u-b_2(t)v),
\end{cases}
\end{equation}
which are explicitly given by
\begin{equation}\label{p-q(t)}
\begin{cases}
p(t)=\frac{p_0e^{\int_0^tr_1(s)ds}}{1+p_0\int_0^te^{\int_0^sr_1(\tau)d\tau}a_1(s)ds},
~p_0=\frac{e^{\int_0^Tr_1(s)ds}-1}{\int_0^Te^{\int_0^sr_1(\tau)d\tau}a_1(s)ds},\\
q(t)=\frac{q_0e^{\int_0^tr_2(s)ds}}{1+q_0\int_0^te^{\int_0^sr_2(\tau)d\tau}b_2(s)ds},
~q_0=\frac{e^{\int_0^Tr_2(s)ds}-1}{\int_0^Te^{\int_0^sr_2(\tau)d\tau}b_2(s)ds}.
\end{cases}
\end{equation}

For system \eqref{uv-eq} with autonomous nonlinearities,
the dynamical behaviors especially for traveling wave solutions
have been understood very well,
see, e.g., \cite{hosono1989,kan-on1995,tang1980,vuuren1995,guo2011,guo2012}.
Recently, there have been quite a few works focusing on
the nonautonomous Lotka-Volterra competition-diffusion system.
Among others, Zhao and Ruan \cite{zhao2011} established the existence,
uniqueness and stability of time periodic traveling wave fronts for system \eqref{uv-eq}
under \textbf{monostable} assumptions. Later, they extended these results to a class of periodic advection-reaction-diffusion systems in \cite{zhao2014}. With respect to the \textbf{bistable} situation, Bao and Wang \cite{bao2013} studied the existence, uniqueness and stability of time periodic traveling wave fronts, while Bao et al. \cite{bao2015} further considered those properties of time periodic traveling curved fronts in two dimensional space.

Throughout the paper, we always assume that
\begin{description}
\item[(A1)]
     $r_i, a_i, b_i\in C^\theta(\mathbb{R},\mathbb{R})$ with $0<\theta<1.$
     $\overline{r_i}>0, a_i(t)>0$ and $b_i(t)>0$ for any $t\in [0,T].$
     $r_i(t+T)=r_i(t), a_i(t+T)=a_i(t), b_i(t+T)=b_i(t), i=1,2.$
\item[(A2)]
    $\overline{r_1}
    <\mathop{\min}\limits_{t\in [0,T]}\left(\frac{b_1(t)}{b_2(t)}\right)\overline{r_2},
    ~\overline{r_2}
    <\mathop{\min}\limits_{t\in[0,T]}\left(\frac{a_2(t)}{a_1(t)}\right)\overline{r_1}.$
\item[(A3)] $\overline{r_1}+\overline{r_2}
    >\mathop{\max}\limits_{t\in[0,T]}\left(\frac{a_2(t)}{a_1(t)}\right)\overline{r_1}$,~ $\overline{r_1}+\overline{r_2}
    >\mathop{\max}\limits_{t\in[0,T]}\left(\frac{b_1(t)}{b_2(t)}\right)\overline{r_2}.$
\end{description}
Note that (A2) implies that the two semi-trivial
T-periodic solutions $(p(t),0)$ and $(0,q(t))$ are \textbf{stable} in the interior of the
positive quadrant $\mathbb R_+^2=\{(u,v)|~u>0,~v>0\}.$
(A3) might be a technique assumption which ensures that the eigenvalue
problem related to the linearized system of \eqref{uv-oeq} at $(p(t),0)$ and $(0,q(t))$
exactly admits a positive eigenvalue and the corresponding eigenfunction is positive,
which is necessary in establishing the existence of
periodic traveling wave fronts in \cite{bao2013}. Moreover, combining (A2) and \eqref{p-q(t)}, it follows that
\begin{equation*}
\begin{aligned}
\overline{r_1}=\frac{1}{T}\int_0^Ta_1(s)p(s)ds
<\mathop{\min}\limits_{t\in[0,T]}\left(\frac{b_1(t)}{b_2(t)}\right)\overline{r_2}
=\mathop{\min}\limits_{t\in [0,T]}\left(\frac{b_1(t)}{b_2(t)}\right)\frac{1}{T}\int_0^Tb_2(s)q(s)ds,\\
\overline{r_2}=\frac{1}{T}\int_0^Tb_2(s)q(s)ds
<\mathop{\min}\limits_{t\in[0,T]}\left(\frac{a_2(t)}{a_1(t)}\right)\overline{r_1}
=\mathop{\min}\limits_{t\in[0,T]}\left(\frac{a_2(t)}{a_1(t)}\right)\frac{1}{T}\int_0^Ta_1(s)p(s)ds,\\
\end{aligned}
\end{equation*}
which indicates that $\overline{b_1q-a_1p}>0$ and $\overline{a_2p-b_2q}>0.$

Let
$$
u^*(t,x)=\frac{u(t,x)}{p(t)},\quad v^*(t,x)=\frac{q(t)-v(t,x)}{q(t)},
$$
then \eqref{uv-eq} becomes~(omitting $*$ for simplicity)
\begin{equation}\label{uv-trans-eq}
\begin{cases}
u_t=u_{xx}+a_1pu\left[1-u-N_1(t)(1-v)\right],\\
v_t=dv_{xx}+b_2q(1-v)\left[N_2(t)u-v\right],
\end{cases}
\end{equation}
where
$$N_1(t)=\frac{b_1(t)q(t)}{a_1(t)p(t)} \text{ and }
N_2(t)=\frac{a_2(t)p(t)}{b_2(t)q(t)} \text{ for any } t\in\mathbb R,
$$
and the corresponding traveling wave system is
\begin{equation}\label{PQ}
\begin{cases}
P_t=P_{zz}+cP_z+a_1pP[1-P-N_1(t)(1-Q)],\\
Q_t=dQ_{zz}+cQ_z+b_2q(1-Q)[N_2(t)P-Q],\\
(P(t,z),Q(t,z))=(P(t+T,z),Q(t+T,z)),\\
\mathop{\lim }\limits_{z\to-\infty}(P,Q)=(0,0),
\mathop{\lim }\limits_{z\to+\infty}(P,Q)=(1,1).
\end{cases}
\end{equation}

We first state the existence result of periodic traveling wave fronts of \eqref{uv-trans-eq}.

\begin{proposition}{\rm\cite[Theorem 2.5]{bao2013}}
Assume (A1)-(A3). Then there exists $c\in\mathbb R$ such that
\eqref{PQ} admits a solution $(P(t,x-ct),Q(t,x-ct))$
satisfying $(P_z(t,z),Q_z(t,z))>(0,0)$ for any $(t,z)\in\mathbb R^+\times\mathbb R.$
\end{proposition}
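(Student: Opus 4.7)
The plan is to prove the existence of a time-periodic bistable traveling front by exploiting the cooperative structure of the transformed system \eqref{uv-trans-eq}. After the change of variables performed above, the reaction part is monotone on $[0,1]\times[0,1]$: the $v$-derivative of the $u$-reaction equals $a_1(t)p(t)N_1(t)u\geq 0$, and the $u$-derivative of the $v$-reaction equals $b_2(t)q(t)N_2(t)(1-v)\geq 0$. Linearizing at $(0,0)$ and at $(1,1)$ produces $2\times 2$ triangular matrices whose diagonal entries have negative time-averages thanks to (A2) and the positivity of the coefficients, so both end-states are linearly stable and the problem is genuinely bistable.

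First I would set up the principal periodic-parabolic eigenvalue problems at each end-state in the moving frame. Hypothesis (A3) is used precisely to ensure that the associated eigenvalue is positive with a positive eigenfunction, supplying the exponential decay rate at $z=-\infty$ and exponential approach at $z=+\infty$ needed both to construct super- and subsolutions and to obtain later a priori tail estimates.

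Next I would build the front by an approximation/homotopy argument. One natural route is to replace the nonlinearity by a monostable perturbation depending on a small parameter $\varepsilon$, invoke the existing theory of monostable time-periodic fronts from Zhao and Ruan \cite{zhao2011} to produce approximate fronts $(P_\varepsilon,Q_\varepsilon)$ with speeds $c_\varepsilon$, normalize by, say, $P_\varepsilon(0,0)=\tfrac{1}{2}$, and then pass to the limit $\varepsilon\to 0^+$. Parabolic Schauder estimates in the moving frame supply compactness, while the monotonicity $(P_z,Q_z)>(0,0)$ is preserved in the limit via a sliding argument combined with the strong maximum principle for cooperative systems. An equivalent route is to fix a large interval $(-L,L)$, solve the resulting boundary value problem by monotone iteration between an explicit super-sub-solution pair built from the positive eigenfunctions of the first step, and then let $L\to\infty$.

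The main obstacle is twofold. First, unlike the monostable case, the wave speed $c$ in the bistable regime is uniquely selected (up to translation), so one must show that the sequence $\{c_\varepsilon\}$ (or $\{c_L\}$) stays in a compact subset of $\mathbb R$; this requires a priori bounds obtained by comparison with one-species monostable sub-problems and by using the exponential rates furnished by the eigenvalue analysis. Second, after extraction of a convergent subsequence, one must exclude the possibility that the limiting profile degenerates to an intermediate periodic equilibrium rather than connecting $(0,0)$ to $(1,1)$. This is where bistability is really used: any such intermediate $T$-periodic equilibrium is ruled out by a Liouville-type argument based on the stability of both endpoints, the strict monotonicity of the profile, and the normalization at $z=0$.
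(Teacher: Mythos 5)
First, note that the paper you are working against does not prove this proposition at all: it is quoted verbatim from Bao and Wang \cite{bao2013}, where existence of the periodic bistable front for the cooperative system \eqref{uv-trans-eq} is obtained by a dynamical-systems route — working with the period (Poincar\'e) map of the monotone semiflow generated by the cooperative system and invoking the abstract theory of bistable traveling waves for such semiflows, with (A2) giving stability of the two end states and (A3) guaranteeing that the linearized periodic eigenvalue problems at $(p(t),0)$ and $(0,q(t))$ have a positive principal eigenvalue with positive eigenfunction (these eigenfunctions are what enter the sub/supersolution constructions). Your proposal — monostable $\varepsilon$-regularization or finite-interval monotone iteration plus compactness — is therefore a genuinely different route, and in principle such approximation schemes can work; but as written it has concrete gaps that are precisely the hard points of the bistable existence problem.

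The main gap is the selection and control of the speed. If you perturb the bistable nonlinearity into a monostable one, the resulting periodic monostable problem (as in \cite{zhao2011}) admits fronts for a whole half-line of speeds $[c^*_\varepsilon,+\infty)$, so ``the speed $c_\varepsilon$'' is not well defined; you must choose (say) the minimal speed, and then both the two-sided uniform bound on $\{c_\varepsilon\}$ and the identification of the limit require more than comparison with one-species subproblems. Likewise, in the finite-interval route the unknown $c$ must be determined simultaneously with the profile (normalization plus a continuation/shooting argument in $c$); your sketch does not say how $c_L$ is defined, and monotone iteration between eigenfunction-based sub/supersolutions by itself only produces a solution between them, not a connection of $(0,0)$ to $(1,1)$. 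The second gap is the exclusion of degenerate limits: under (A2) the kinetic system has an intermediate (unstable) positive $T$-periodic coexistence state, and the limit profile could a priori connect an end state to that intermediate state (or be a stacked pair of such fronts); ruling this out is not a soft ``Liouville-type argument based on stability of both endpoints'' but requires a counter-propagation/speed-comparison condition for the two monostable subproblems determined by the intermediate state — this is exactly where (A2)–(A3) must be used quantitatively, and your sketch never makes that connection. Finally, a smaller inaccuracy: (A3) does not ``supply the exponential decay rate'' at the stable limiting states; it concerns the principal eigenvalue of the linearization of the kinetic system at the semitrivial states, and the exact decay at the stable states is a separate, harder issue (indeed it is the subject of Theorems \ref{+infinity} and \ref{-infinity} of the present paper, where only perturbed exponents are available a priori).
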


\begin{remark}\rm
Observe that, though the existence of periodic traveling wave fronts has been
established in \cite{bao2013}, the sign of the wave speed $c$ remains an open problem.
In fact, it is not easy to determine the sign of $c$,
which is important to decide which species becomes dominant and eventually
occupies the whole domain. Therefore, it remains an interesting problem
to study the sign of wave speed of the bistable traveling wave fronts.
Particularly, when $c=0$, the propagation shall be failure and there
occurs standing waves of \eqref{uv-trans-eq}, which makes it very difficult to construct sub- and supersolutions for \eqref{uv-trans-eq}.
In the rest of this paper, we always assume that $c\neq0$.
\end{remark}

It is well known that the asymptotic behavior of the
traveling wave solution is of great importance in investigating
further properties of the traveling wave solution such as the uniqueness and
stability (see, e.g., \cite{volpert2000,likun2012,likun2016}),
since this often determines the choice of perturbation space.
On the other hand, the asymptotic behavior of traveling waves
is also crucial in constructing appropriate sub- and supersolutions,
which enables us to establish some other new types of entire solutions (see, e.g., \cite{morita2009,wu2010,lv2010,LZZ2015}).
In other words, it is very necessary and important to study the asymptotic behavior
of traveling wave fronts near the limiting states.
In the homogeneous case,
asymptotic behavior of the traveling wave solution for reaction-diffusion
equations can usually be obtained by the standard asymptotic theory
( see, e.g., \cite{Leung2008,morita2009,lv2010}) or by using various versions of the Ikehara's Theorem (see, e.g., \cite{carr2004,chen2006,guo2012,zhang2012}).
In the nonhomogeneous case, this subject becomes more complex and difficult. In particular, by establishing exponential lower and upper bounds and using the comparison argument,
Hamel \cite{hamel2008} obtained the exponential behavior of the traveling wave front for a reaction-advection-diffusion equation with a general \textbf{monostable} nonlinearity
in periodic excitable media as it approaches the \textbf{unstable} limiting state, while
Zhao \cite{zhao2009} considered a scalar \textbf{bistable} reaction-diffusion equation in
infinite cylinders and obtained the exact exponential decay rates of time periodic traveling wave fronts near the \textbf{stable} limiting states by applying the partial Fourier transform method.

It should be mentioned that, for the periodic \textbf{monostable} traveling wave solution $(P(t,z),Q(t,z))$ of system \eqref{uv-trans-eq},
Zhao and Ruan \cite{zhao2011} established the exact exponential decay rates as it approaches its \textbf{unstable} limiting state.
Very recently, the authors of the current paper \cite{du2016} further obtained
the exact exponential decay rates as it approaches
its \textbf{stable} limiting state under a technical condition
which ensures that the eigenvalue related to the linearized $u$-equation
is smaller than that related to the linearized $v$-equation,
which leads to the same decay rate for $P(t,z)$ and $Q(t,z)$ as $z\to+\infty.$
In the present paper, we continue to study the exact exponential decay rates
of periodic \textbf{bistable} traveling fronts of system \eqref{uv-eq}
as they approach their \textbf{stable} limiting states for the other two cases,
that is, the eigenvalue related to the linearized $u$-equation
is equal or greater than that related to the linearized $v$-equation.
Compared with the autonomous Lotka-Volterra competition system, the time dependence
of the coefficients causes substantial technical difficulties,
and one cannot use the standard asymptotic theory or the Ikehara's theorem to
study the exponential decay rates of a periodic traveling wave front,
and hence different techniques have to be
utilized to address this issue.

Although the traveling wave solution is of great significance in characterizing the dynamics of reaction-diffusion equations, there might be other interesting patterns.
More precisely, a new type of entire solution which behaves as a combination of traveling wave fronts as $t\to-\infty$ has been observed in various reaction-diffusion problems,
we refer to the earlier and original work of
Hamel and Nadirashvili \cite{hamel1999,hamel2001} and Yagisita \cite{yagisita2003},
see also \cite{fukao2004,chen2005,guo2005,morita2006,li2008,wang2009} for equations with and without delays, and \cite{li2010,sun2011} with nonlocal dispersal.
In regard to systems, Morita and Tachibana \cite{morita2009} first established the existence of entire solutions for a homogeneous Lotka-Volterra competition-diffusion system, and similar results were showed by Guo and Wu \cite{wu2010} for the discrete version and
Li et al. \cite{LZZ2015} for the nonlocal dispersal version.
Note that all these works mainly concerned with space/time homogeneous equations.
Recently, many researchers devoted to the study of entire solutions for space/time periodic equations, see, e.g., \cite{bu2016,LWZ2016,liu2012}.
In particular, Du et al. \cite{du2016} established the
invasion entire solutions for system \eqref{uv-eq} with \textbf{monostable} structure.
In this paper, we will study the existence and various properties
of entire solutions for system \eqref{uv-eq} with \textbf{bistable} structure.

In order to employ the basic idea developed in \cite{morita2009} to
establish the existence of entire solutions,
that is, constructing a pair of appropriate sub- and supersolutions
and using the comparison argument,
we need some estimates which are concerned with the asymptotic behavior of
periodic traveling wave fronts.
One of the main difficulties arises in obtaining the exact exponential
decay rate of the periodic traveling wave front
as it approaches its limiting states.
Inspired by \cite{zhao2011},
we apply the two-sided Laplace transform method
to obtain the exact exponential decay rate of the periodic
traveling front as it approaches its \textbf{stable} limiting states,
which is essentially based on the a priori exponential estimates of
periodic traveling wave tails at infinity.
Here we would like to emphasize that, unlike the a priori exponential estimates of
periodic traveling wave tails at the \textbf{unstable} limiting state                                                                                                             characterized by the principle eigenvalue associated with the linearized system
(see \cite[Lemma $3.3$]{zhao2011}),
the a priori exponential estimates at the \textbf{stable} limiting
state can only be characterized by
a perturbation of the corresponding principle eigenvalues
${\nu_{i,\epsilon}^\pm}~(i=2,3)$ with some small $\epsilon>0$ and $\nu_i^\pm~(i=1,4)$
(see Lemmas \ref{general-estimate-+} and \ref{general-estimate--}),
which causes additional difficulties and
makes it vastly more complicated to use the two-sided Laplace transform method.

Our first main results, about the exact exponential decay rates of the
periodic traveling wave front of system \eqref{uv-trans-eq}
as it tends to its limiting states, are stated as follows.

\begin{theorem}\label{+infinity}
Assume (A1)-(A3).
Let $(P(t,z),Q(t,z))$ be a traveling wave
solution of \eqref{uv-trans-eq} with $c\neq0,$ then
\begin{align*}
&\mathop{\lim}\limits_{z\to+\infty}\frac{1-Q(t,z)}{{k_1}{e^{\nu_2z}}\phi_2(t)}=1,
~\mathop{\lim}\limits_{z\to+\infty}\frac{{Q_z}(t,z)}{{k_1}{e^{\nu_2z}}\phi_2(t)}=-\nu_2
\text{ uniformly in } t\in \mathbb R,\\
&\mathop{\lim}\limits_{z\to+\infty}\frac{1-P(t,z)}{{k_1}{e^{\nu_2z}}\tilde{\phi}_1(t)}=1,
~\mathop{\lim}\limits_{z\to+\infty}\frac{{P_z}(t,z)}{{k_1}{e^{\nu_2z}}\tilde{\phi}_1(t)}
=-\nu_2\text{ uniformly in } t\in \mathbb R, \text{ if } \nu_1<\nu_2,\\
&\mathop{\lim}\limits_{z\to+\infty}\frac{1-P(t,z)}
{{\vartheta_1\left|{z}\right|}{e^{\nu_1z}}\phi_1(t)}=1,
~\mathop{\lim}\limits_{z\to+\infty}\frac{{P_z}(t,z)}
{{\vartheta_1\left|{z}\right|}{e^{\nu_1z}}\phi_1(t)}
=-\nu_1\text{ uniformly in } t\in \mathbb R, \text{ if } \nu_1=\nu_2,\\
&\mathop{\lim}\limits_{z\to+\infty}\frac{1-P(t,z)}{{k_2}{e^{\nu_1z}}\phi_1(t)}=1,
~\mathop{\lim}\limits_{z\to+\infty}\frac{{P_z}(t,z)}{{k_2}{e^{\nu_1z}}\phi_1(t)}=-\nu_1
 \text{ uniformly in } t\in\mathbb R, \text{ if } \nu_1>\nu_2,
\end{align*}
where
$k_i>0,~\nu_i<0~(i=1,2)$ and $\vartheta_1=\vartheta_1(k_1,\nu_1,c)>0 $
are some constants,
$\phi_i(t)~(i=1,2)$ and $\tilde{\phi}_1(t)$ are some positive $T-$periodic functions
in $\mathbb R.$
\end{theorem}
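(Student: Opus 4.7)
My plan is to reduce the asymptotic analysis to a linear problem near the stable state $(P,Q)=(1,1)$ and then extract the exact decay rates by a two-sided Laplace transform, as suggested in the introduction. Setting $\tilde P=1-P$ and $\tilde Q=1-Q$, the linearization of \eqref{PQ} at $(\tilde P,\tilde Q)=(0,0)$ is triangular: the $\tilde Q$-component satisfies
\begin{equation*}
\tilde Q_t=d\tilde Q_{zz}+c\tilde Q_z-(a_2(t)p(t)-b_2(t)q(t))\tilde Q,
\end{equation*}
while $\tilde P$ is driven by $\tilde Q$ through
\begin{equation*}
\tilde P_t=\tilde P_{zz}+c\tilde P_z-a_1(t)p(t)\tilde P+b_1(t)q(t)\tilde Q.
\end{equation*}
Plugging in ansatz solutions $e^{\nu z}\phi(t)$ with $T$-periodic $\phi$ and using $\overline{a_2p-b_2q}>0$ and $\overline{a_1p}>0$, I identify $\nu_2<0$ and $\nu_1<0$ as the unique negative roots of $d\nu^2+c\nu-\overline{a_2p-b_2q}=0$ and $\nu^2+c\nu-\overline{a_1p}=0$, respectively; the $T$-periodic eigenfunctions $\phi_2(t),\phi_1(t)$ then solve explicit first-order linear ODEs, and the function $\tilde\phi_1(t)$ arising in the case $\nu_1<\nu_2$ is obtained by solving the inhomogeneous periodic problem
\begin{equation*}
\tilde\phi_1'(t)+[a_1(t)p(t)-\nu_2^2-c\nu_2]\tilde\phi_1(t)=b_1(t)q(t)\phi_2(t).
\end{equation*}

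The first substantial step is to promote the a priori estimates of Lemmas \ref{general-estimate-+} and \ref{general-estimate--} (formulated in terms of the perturbed principal eigenvalues $\nu_{i,\epsilon}^{\pm}$ and $\nu_i^{\pm}$) to two-sided exponential bounds on $1-P(t,z)$, $1-Q(t,z)$ and their $z$-derivatives valid for $z$ large. This guarantees that, for $s$ in a vertical strip lying strictly to the right of $-\nu_2$ and $-\nu_1$, the two-sided Laplace transforms
\begin{equation*}
\mathcal U(s,t)=\int_{\mathbb R}(1-P(t,z))e^{-sz}\,dz,\qquad \mathcal V(s,t)=\int_{\mathbb R}(1-Q(t,z))e^{-sz}\,dz
\end{equation*}
converge absolutely and depend analytically on $s$.

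The core of the proof is the pole analysis of $\mathcal U,\mathcal V$. Transforming the full traveling wave system, the quadratic corrections produce Laplace transforms that are analytic in a strictly larger half-plane, so the leading singularities of $\mathcal U,\mathcal V$ come only from the linearized operators. The $T$-periodic ODE satisfied by $\mathcal V(s,\cdot)$ has a Floquet obstruction to periodicity precisely when $ds^2-cs-\overline{a_2p-b_2q}=0$, yielding a simple pole at $s=-\nu_2$ with residue proportional to $\phi_2(t)$; this gives the claimed asymptotics of $1-Q$ and, by differentiation, of $Q_z$. For $\mathcal U$ the candidate poles are $-\nu_1$ (from the homogeneous $\tilde P$-operator) and $-\nu_2$ (inherited through the forcing $b_1q\tilde Q$), and their interplay produces exactly the three regimes in the theorem: when $\nu_1>\nu_2$ the right-most singularity is a simple pole at $-\nu_1$ with residue $\propto\phi_1(t)$; when $\nu_1<\nu_2$ it is a simple pole at $-\nu_2$ with residue $\propto\tilde\phi_1(t)$; and when $\nu_1=\nu_2$ the two coalesce into a double pole which, via the identity $\mathcal L^{-1}[(s-s_0)^{-2}]=ze^{s_0 z}$, produces the extra factor $|z|$ in the asymptotics, with $\vartheta_1$ determined by the second-order Laurent coefficient.

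The hardest step is expected to be the inverse Laplace passage at a stable limiting state. Because the a priori estimates are only sharp up to perturbations $\epsilon$ of the principal eigenvalues, one has to meromorphically continue $\mathcal U,\mathcal V$ across the lines $\text{Re}\,s=-\nu_i$ while maintaining enough decay on vertical contours to allow shifting of the Bromwich contour and a residue/Tauberian argument uniform in $t\in[0,T]$. The resonant case $\nu_1=\nu_2$ is especially delicate, since the homogeneous and inhomogeneous contributions merge; careful bookkeeping of the double pole, together with the periodic eigenfunctions, will be required to pin down the coefficient $\vartheta_1=\vartheta_1(k_1,\nu_1,c)$ and to obtain convergence uniformly in $t\in\mathbb R$.
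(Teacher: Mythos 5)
Your setup matches the paper's: reduce to the cooperative system in $(1-P,1-Q)$ near $(0,0)$, read off $\nu_1,\nu_2$ as the negative roots of $\nu^2+c\nu-\overline{a_1p}=0$ and $d\nu^2+c\nu-\overline{a_2p-b_2q}=0$, and obtain $\tilde\phi_1$ from the inhomogeneous $T$-periodic ODE. Your transform strategy, however, coincides with what the paper actually does only in part: the $Q$-asymptotics are indeed obtained by the two-sided Laplace transform (quoted from \cite{du2016}), and the case $\nu_1>\nu_2$ is handled by transforming the first-order system in $z$ with the operator $\mathcal A$ on $T$-periodic functions, isolating $\nu_1$ as a simple pole of the resolvent and shifting the contour. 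For the other two cases the paper is far more elementary than your pole analysis: it writes down exact solutions of the linearized inhomogeneous equation --- $k_1e^{\nu_2z}\tilde\phi_1(t)$ when $\nu_1<\nu_2$, and the resonant profile $e^{\nu_1z}\phi_1(t)\bigl(\vartheta_1|z|+\phi^*(t)\bigr)$ when $\nu_1=\nu_2$, where $\vartheta_1=-(2\nu_1+c)^{-1}\overline{k_1b_1q\phi_2\phi_1^{-1}}$ is forced by requiring $\phi^*$ to be $T$-periodic --- and then traps the error by sub/supersolutions of the form $\pm\bigl(Ae^{\nu_1z}-Be^{(\nu_1-\varepsilon)z}\bigr)\phi_1(t)$ and the maximum principle, sidestepping exactly the meromorphic continuation and double-pole bookkeeping you flag as the hardest step.

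As a proof, your proposal has genuine gaps. First, nothing in it shows the leading coefficients are nonzero: the transform could be regular at the claimed right-most singularity, in which case $1-P$ would decay strictly faster and the stated limits would fail. In the paper this is precisely where the a priori lower bound of Lemma \ref{general-estimate-+} is used to force $k_2>0$ when $\nu_1>\nu_2$, and in the resonant case the fact that the two singularities really merge into a second-order one, with $\vartheta_1>0$, is a solvability (zero-average) condition for a periodic-coefficient ODE in $t$ that must be checked, not assumed; you never make either connection. Second, the inversion step is not mere contour shifting in this time-periodic, function-valued setting: one needs the spectral isolation $\Theta_{\varepsilon^\prime}\cap\sigma(\mathcal A)=\{\nu_1\}$ and simplicity of the pole of $(\lambda I-\mathcal A)^{-1}$, resolvent decay of order $1/|\eta|$ on vertical lines together with $W^{1,1}$-in-$z$ bounds on the forcing to make the shifted integral converge, and then interior parabolic estimates plus Sobolev embedding to upgrade $L^2$-type control of the remainder to the uniform-in-$t$ limits claimed in the theorem; you name this as ``the hardest step'' but supply no argument. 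Finally, a bookkeeping error: since $1-P,1-Q\to1$ as $z\to-\infty$, your full-line transforms with kernel $e^{-sz}$ converge only in the strip roughly $\max\{\nu_1,\nu_2\}<\operatorname{Re}s<0$ (or you must cut off at $-\infty$, as the paper does with $\chi$), and the relevant singularities sit at $s=\nu_i<0$, not at $-\nu_i$; as written, your strip and pole locations are inconsistent with your own convention, and in the case $\nu_1>\nu_2$ the pole you should single out is the right-most one, $s=\nu_1$.
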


\begin{theorem}\label{-infinity}
Assume (A1)-(A3).
Let $(P(t,z),Q(t,z))$ be a traveling wave
solution of \eqref{uv-trans-eq} with $c\neq0,$ then
\begin{align*}
&\mathop{\lim}\limits_{z\to-\infty}\frac{P(t,z)}{{k_3}{e^{\nu_3z}}\psi_1(t)}=1,
~\mathop{\lim}\limits_{z\to-\infty}\frac{{P_z}(t,z)}{{k_3}{e^{\nu_3z}}\psi_1(t)}=\nu_3
\text{ uniformly in } t\in \mathbb R,\\
&\mathop{\lim}\limits_{z\to-\infty}\frac{Q(t,z)}{{k_3}{e^{\nu_3z}}\tilde{\psi}_2(t)}=1,
~\mathop{\lim}\limits_{z\to-\infty}\frac{{Q_z}(t,z)}{{k_3}{e^{\nu_3z}}\tilde{\psi}_2(t)}
=\nu_3\text{ uniformly in } t\in \mathbb R, \text{ if } \nu_4>\nu_3,\\
&\mathop{\lim}\limits_{z\to-\infty}\frac{Q(t,z)}{{\vartheta_2|z|}{e^{\nu_4z}}\psi_2(t)}=1,
~\mathop{\lim}\limits_{z\to-\infty}\frac{{Q_z}(t,z)}{{\vartheta_2|z|}{e^{\nu_4z}}\psi_2(t)}
=\nu_4\text{ uniformly in } t\in \mathbb R, \text{ if } \nu_4=\nu_3,\\
&\mathop{\lim}\limits_{z\to-\infty}\frac{Q(t,z)}{{k_4}{e^{\nu_4z}}\psi_2(t)}=1,
~\mathop{\lim}\limits_{z\to-\infty}\frac{{Q_z}(t,z)}{{k_4}{e^{\nu_4z}}\psi_2(t)}=\nu_4
~\text{ uniformly in } t\in \mathbb R, \text{ if } \nu_4<\nu_3,
\end{align*}
where $k_i>0,~\nu_i>0~(i=3,4)$ and
$\vartheta_2=\vartheta_2(k_3,\nu_4,c)>0$ are some constants,
$\psi_i(t)~(i=1,2)$ and $\tilde{\psi}_1(t)$ are some positive $T-$periodic functions
in $\mathbb R.$
\end{theorem}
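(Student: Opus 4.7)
The plan is to prove Theorem \ref{-infinity} in a manner parallel to Theorem \ref{+infinity}, exploiting the symmetric roles played by the two stable limiting states of the transformed system \eqref{uv-trans-eq}. First, I would linearize the traveling wave system \eqref{PQ} at $(P,Q)=(0,0)$. The $P$-equation decouples at first order, and searching for Floquet-type solutions of the form $e^{\nu z}\psi_1(t)$ with $T$-periodic $\psi_1$ yields a characteristic relation whose positive root is the claimed $\nu_3$, with a positive eigenfunction $\psi_1(t)$ whose averaged coefficient is controlled by $\overline{a_1 p - b_1 q}<0$. The $Q$-equation's homogeneous linearization produces a second characteristic relation (involving $d$ and $\overline{b_2 q}=\overline{r_2}$) whose positive root is $\nu_4$ with eigenfunction $\psi_2(t)$; but $Q$ is forced at first order by the term $b_2(t)q(t)N_2(t)P$, so the true decay rate of $Q$ will be dictated by the competition between the forced mode at rate $\nu_3$ and the homogeneous mode at rate $\nu_4$, producing exactly the three cases in the statement.

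Next, I would establish a priori exponential bounds of the form $P(t,z),Q(t,z)=O(e^{(\nu_\ast-\epsilon)z})$ as $z\to-\infty$ for arbitrarily small $\epsilon>0$, by the maximum-principle/sliding-type construction encoded in Lemma \ref{general-estimate--}. Because $(0,0)$ is a stable state, the bounds cannot be pinned directly to $\nu_3,\nu_4$ but only to perturbed Floquet exponents $\nu_{i,\epsilon}^{\pm}$; this is the analytical step flagged in the introduction and it is where (A1)--(A3) are used in their sharp form, together with the positivity information $\overline{b_1 q-a_1 p}>0$ and $\overline{a_2 p-b_2 q}>0$. Matching estimates for $P_z,Q_z$ then follow from standard parabolic regularity applied to the equations in a shifted moving frame.

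With the exponential bounds in hand, the two-sided Laplace transforms $\widehat{P}(t,\lambda)=\int_{-\infty}^{+\infty}e^{-\lambda z}P(t,z)\,dz$ and $\widehat{Q}(t,\lambda)$ are well defined and analytic on a suitable vertical strip. Substituting them into \eqref{PQ} yields linear, $T$-periodic-in-$t$, first-order ODEs with $\lambda$-dependent coefficients, whose solvability condition determines meromorphic extensions of $\widehat{P},\widehat{Q}$ to $\mathrm{Re}\,\lambda<\nu_3$ and $\mathrm{Re}\,\lambda<\min\{\nu_3,\nu_4\}$ respectively. The rightmost singularity of $\widehat{P}$ is a simple pole at $\lambda=\nu_3$. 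For $\widehat{Q}$ the rightmost singularity is (i) a simple pole at $\nu_3$ coming from the forcing when $\nu_4>\nu_3$, (ii) a simple pole at $\nu_4$ coming from the homogeneous operator when $\nu_4<\nu_3$, or (iii) a double pole at $\nu_3=\nu_4$ in the resonant case. Shifting the inversion contour past the dominant pole and computing residues yields the exact asymptotics; the constants $k_3,k_4,\vartheta_2$ and the positive $T$-periodic functions $\psi_i,\tilde\psi_2$ are then read off from the residue formulas, with $\vartheta_2=\vartheta_2(k_3,\nu_4,c)$ arising precisely as the second-order residue coefficient in case (iii). Positivity of $k_3,k_4$ follows from $P,Q\geq 0$ and the sign of the leading Laurent coefficient.

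The main obstacle will be the resonant case (iii): in a time-periodic Floquet setting, the coincidence $\nu_3=\nu_4$ propagates into a resonance in the $T$-periodic Green's kernel associated with the $\lambda$-dependent ODE for $\widehat{Q}$, and extracting the prefactor $\vartheta_2$ uniformly in $t\in\mathbb{R}$ requires a careful Laurent expansion of the fundamental solution together with a uniform-in-$t$ version of the inverse Laplace formula, which is substantially more delicate than in the autonomous case. Once the leading-order asymptotics for $P$ and $Q$ are secured, the corresponding statements for $P_z,Q_z$ follow either by differentiating the inversion integral inside the strip of analyticity or by repeating the same Laplace-transform analysis on the equations satisfied by $P_z,Q_z$, yielding the claimed limits $\nu_3$ and $\nu_4$ for the logarithmic derivatives.
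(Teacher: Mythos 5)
Your overall architecture is the right one: the paper proves Theorem \ref{-infinity} precisely by rerunning, at $z\to-\infty$, the three-case analysis it performed at $z\to+\infty$ (Proposition \ref{1<2}, Theorems \ref{1>2} and \ref{1=2}), starting from the perturbed-exponent a priori bounds of Lemma \ref{general-estimate--}; and your identification of the decoupled $P$-mode at rate $\nu_3$, of the $Q$-equation as forced by $a_2(t)p(t)P$ against a homogeneous rate $\nu_4$, and of the resulting trichotomy is exactly what drives the statement. The genuine divergence is in how the forced-dominant and resonant cases are closed. You propose residue calculus on meromorphic continuations of the transforms in all three cases (simple pole at $\nu_3$ from the forcing if $\nu_4>\nu_3$, simple pole at $\nu_4$ if $\nu_4<\nu_3$, double pole if $\nu_3=\nu_4$). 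The paper uses the operator-valued Laplace-transform/contour-shift machinery only for the decoupled component and for the case where the homogeneous rate dominates (the analogue of Theorem \ref{1>2}); in the forced-dominant case it instead writes down the explicit $T$-periodic particular solution $\tilde\psi_2$ of the forced periodic ODE and squeezes $Q-k_3e^{\nu_3 z}\tilde\psi_2$ between barriers $\pm K e^{(\nu_3-\varepsilon)z}$ by the maximum principle (as in Proposition \ref{1<2}); and in the resonant case it avoids the transform altogether by constructing the secular solution $e^{\nu_4 z}\psi_2(t)\left(\vartheta_2|z|+\psi^*(t)\right)$ — this is where $\vartheta_2=-(2\nu_4+c)^{-1}\overline{\varrho_2(t)}$ comes from — and comparing the error with $\pm\left(Ae^{\nu_4 z}-Be^{(\nu_4-\varepsilon)z}\right)\psi_2(t)$, exactly as in Theorem \ref{1=2}. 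Your route is methodologically uniform; the paper's buys elementary comparison arguments, explicit formulas for $\tilde\psi_2,\psi^*,\vartheta_2$, and avoidance of any higher-order pole analysis.

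Where your proposal falls short of a proof: first, the resonant case rests on a double-pole residue computation for a $\lambda$-dependent periodic problem in which the forcing transform and the periodic solvability condition degenerate at the same point, while the a priori bounds only give $Q=O(e^{\nu_4^- z})$ with $\nu_4^-<\nu_4$; establishing that the shifted-contour remainder is $o(|z|e^{\nu_4 z})$ uniformly in $t$ is precisely the delicate step the paper's barrier construction is designed to bypass, and you acknowledge it as an obstacle rather than resolve it. Second, one cannot literally substitute the transforms into \eqref{PQ}: the quadratic remainders (and the fact that $P,Q\to1$ as $z\to+\infty$) must first be packaged as a rapidly decaying forcing, e.g. via the cutoff $\chi$ as in Theorem \ref{1>2}, and the $Q$-analysis must feed in the already-established $P$-asymptotics; you gesture at this but it is where Lemma \ref{general-estimate--} actually enters. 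Third, positivity of $k_3,k_4$ does not follow from $P,Q\geq0$ alone (that yields only $k\geq0$); as in the end of the proof of Theorem \ref{1>2}, one must play the remainder estimate $O(e^{(\nu+\varepsilon')z})$ against the a priori lower bounds of Lemma \ref{general-estimate--}, with $\epsilon$ chosen small, to exclude $k=0$.
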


With these asymptotic properties, we further construct a pair of appropriate sub- and supersolutions, and then establish the existence of entire solutions which behave as
two periodic traveling wave fronts propagating from both sides of $x$-axis.
In addition, for the traveling wave front $(P,Q)$ of \eqref{uv-trans-eq},
if the wave speed $c<0,$ we further assume that
\begin{description}
\item[(C1)] There exists a positive number $\eta_0$ such that $\frac{P(t,z)}{Q(t,z)}\geq \eta_0$ for any $(t,z)\in\mathbb R\times(-\infty,0],$
\end{description}
and if $c>0,$ we assume that
\begin{description}
\item[(C2)] There exists a positive number $\eta_1$ such that $\frac{1-Q(t,z)}{1-P(t,z)}\geq \eta_1$ for any $(t,z)\in\mathbb R\times[0,+\infty).$
\end{description}
A similar assumption to (C1) or (C2) has appeared in various papers to study entire solutions for Lotka-Volterra competition systems (see, e.g., \cite{morita2009,wu2010,LZZ2015}), which is technical but crucial in constructing sub- and supersolutions. Thankfully, it follows from Theorems \ref{-infinity} and \ref{+infinity} that (C1) and (C2) are valid provided that $\nu_3<\nu_4$ and $\nu_1<\nu_2$, respectively.

Hereafter, we denote
$\bm {u}=(u_1,u_2)\in C_b(\mathbb{R}^2,\mathbb{R}^2),$
the set of all bounded and uniformly continuous functions
from $\mathbb{R}^2$ into $\mathbb{R}^2$.
Moreover, we write $\bm{u}=\bm{v}$ if
$u_1=v_1$ and $u_2=v_2$ for any $(t,x)\in\mathbb R\times\mathbb R,$
$\bm{u}+\bm{v}=(u_1+v_1,u_2+v_2),$
$\left|\bm {u}-\bm {v}\right|=\left|u_1-v_1\right|+\left|u_2-v_2\right|$
and $\bm{u}=a$ if $u_1=a$ and $u_2=a$ for any constant $a.$
The other relations such as $\bm{u}<\bm{v},$ $\bm{u}\leq\bm{v},$
$\bm{u}<a,$ $\bm{u}\leq a,$ ``max'', ``min'', ``sup'' and ``inf''
are similarly to be understood componentwise.
Particularly, denote by $\bm{0}=(0,0)$ and $\bm{1}=(1,1).$

Our another main results, regarding the existence and
some qualitative properties of entire solutions of system \eqref{uv-trans-eq},
are summarized in the next two theorems.

\begin{theorem}\label{ES}
Assume (A1)-(A3).
Let $\bm\Phi(t,z)=(P(t,z),Q(t,z))$ be a traveling wave solution of \eqref{uv-trans-eq}
satisfying (C1) with $c<0.$
Then for any given constants $\theta_1,\theta_2\in\mathbb R,$
system \eqref{uv-trans-eq} admits an entire solution
$\bm{W}_{\theta_1,\theta_2}(t,x)
=(U_{\theta_1,\theta_2}(t,x),V_{\theta_1,\theta_2}(t,x))$
satisfying $\bm{0}<\bm{W}_{\theta_1,\theta_2}(t,x)<\bm{1}$
and
\begin{description}
\item[(i)]
$\bm{W}_{\theta_1,\theta_2}(t+T,x)=\bm{W}_{\theta_1,\theta_2}(t,x)$
or
$\bm{W}_{\theta_1,\theta_2}(t+T,x)>\bm{W}_{\theta_1,\theta_2}(t,x)$
for all $(t,x)\in\mathbb R\times\mathbb R.$
\item[(ii)]
\begin{equation}\label{UV-w-PQ}
\begin{aligned}
\lim_{t\to-\infty}
&\left\{\sup_{x\geq 0}
{\left|\bm{W}_{\theta_1,\theta_2}(t,x)-\bm\Phi(t,x-ct+\theta_1)\right|}\right .\\
&+\left .\sup_{x\leq0}
{\left|\bm{W}_{\theta_1,\theta_2}(t,x)-\bm\Phi(t,-x-ct+\theta_2)\right|}
\right\}
=0.
\end{aligned}
\end{equation}

\item[(iii)]
$\mathop{\lim}\limits_{k\to+\infty}\mathop{\sup}\limits_{(t,x)\in[0,T]\times\mathbb R}
\left|\bm{W}_{\theta_1,\theta_2}(t+kT,x)-\bm{1}\right|=0.$

\item[(iv)]
$\mathop{\lim}\limits_{k\to-\infty}\mathop{\sup}\limits_{(t,x)\in[0,T]\times[a,b]}
\left|\bm{W}_{\theta_1,\theta_2}(t+kT,x)\right|=0$ for any $a,b\in\mathbb{R}$ with $a<b$.

\item[(v)] $\mathop{\lim}\limits_{|x|\to+\infty}\mathop{\sup}\limits_{t\in[t_0,+\infty)}
\left|\bm{W}_{\theta_1,\theta_2}(t,x)-\bm{1}\right|=0$
for any $t_0\in\mathbb R.$

\item[(vi)]
In the sense of locally in $(t,x)\in\mathbb R\times\mathbb R,$
\begin{equation*}
\bm{W}_{\theta_1,\theta_2}(t,x)
\text{ converges to }
\left\{{\begin{array}{*{20}{c}}
\begin{aligned}
&\bm\Phi(t,x-ct+\theta_1) ~\text{ as } \theta_2\to-\infty,\\
&\bm\Phi(t,-x-ct+\theta_2) ~\text{ as } \theta_1\to-\infty, \\
&\bm{0} ~\text{ as } \theta_1\to-\infty \text{ and }\theta_2\to-\infty,\\
&\bm{1} ~\text{ as } \theta_1\to+\infty \text{ or }\theta_2\to+\infty.
\end{aligned}
\end{array}} \right.
\end{equation*}

\item[(vii)]
$\bm{W}_{\theta_1,\theta_2}(t,x)$ is monotone increasing w.r.t.
$\theta_1$ and $\theta_2$ for any $(t,x)\in \mathbb R\times\mathbb R.$

\item[(viii)]
$\bm{W}_{\theta,\theta}(t,x)=\bm{W}_{\theta,\theta}(t,-x)$
for any $\theta\in\mathbb R$ and $(t,x)\in\mathbb R\times\mathbb R.$

\item[(ix)]
For any $\theta_1^*,\theta_2^*\in\mathbb R$ satisfying
$\frac{\theta_1^*-\theta_1+\theta_2^*-\theta_2}{-2cT}\in\mathbb Z,$
there exists $(t_0,x_0)\in\mathbb R\times\mathbb R$ such that
$\bm{W}_{\theta_1^*,\theta_2^*}(\cdot,\cdot)=\bm{W}_{\theta_1,\theta_2}(\cdot+t_0,\cdot+x_0)$
on $\mathbb R\times\mathbb R.$

\end{description}
\end{theorem}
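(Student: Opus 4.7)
The plan is the classical three-step Hamel-Nadirashvili scheme adapted to cooperative systems: build sub- and super-solutions out of two shifted copies of the traveling front $\bm{\Phi}$, solve the Cauchy problem forward from a sequence of initial times $t_n\to-\infty$, and extract a locally convergent subsequence via parabolic compactness. The change of variables $(u^{*},v^{*})=(u/p,(q-v)/q)$ converts the competition system \eqref{uv-eq} into the cooperative form \eqref{uv-trans-eq} with stable equilibria $\bm{0}$ and $\bm{1}$, so the componentwise comparison principle is available throughout.

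For the sub- and super-solutions I would take
\[
\bm{W}^{-}(t,x):=\max\bigl\{\bm{\Phi}(t,x-ct+\theta_1),\,\bm{\Phi}(t,-x-ct+\theta_2)\bigr\},
\]
\[
\bm{W}^{+}(t,x):=\min\bigl\{\bm{\Phi}(t,x-ct+\theta_1)+\bm{\Phi}(t,-x-ct+\theta_2),\,\bm{1}\bigr\}.
\]
Since each constituent is an exact solution and the system is cooperative, $\bm{W}^{-}$ is a sub-solution in the usual weak sense. Writing $(P_i,Q_i)=\bm{\Phi}(t,z_i)$ for the two constituents, a direct computation shows that on the set $\{\bm{W}^{+}<\bm{1}\}$ the super-solution inequality for the $u$-equation reduces to
\[
2P_1P_2\ge N_1(t)\bigl(P_1Q_2+P_2Q_1\bigr),
\]
with an analogous inequality for the $v$-equation in terms of $1-P_i$ and $1-Q_i$. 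These are precisely the pointwise bounds $P/Q\ge N_1(t)$ on the $(0,0)$-tail (what (C1), together with the uniform estimates of Theorem \ref{-infinity}, supplies) and $(1-Q)/(1-P)\ge N_2(t)$ on the $(1,1)$-tail (supplied by Theorem \ref{+infinity}). On the capped region $\bm{W}^{+}\equiv\bm{1}$ the super-solution property is trivial because $\bm{1}$ is stationary. Setting $t_n=-nT$ and letting $\bm{W}_n$ solve \eqref{uv-trans-eq} on $[t_n,\infty)\times\mathbb{R}$ with $\bm{W}_n(t_n,\cdot)=\bm{W}^{-}(t_n,\cdot)$, comparison yields $\bm{W}^{-}\le\bm{W}_n\le\bm{W}^{+}$; interior Schauder estimates give $C^{1,2}_{\mathrm{loc}}$-precompactness, and a diagonal subsequence converges to an entire solution $\bm{W}_{\theta_1,\theta_2}$ still sandwiched between $\bm{W}^{-}$ and $\bm{W}^{+}$. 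The asymptotics \eqref{UV-w-PQ} in (ii) follow immediately, because on $x\ge 0$ the second constituent vanishes uniformly as $t\to-\infty$ and symmetrically on $x\le 0$.

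The remaining items fall out from the sandwich and the known behavior of $\bm{\Phi}$. Items (iii) and (v) use that both constituents tend to $\bm{1}$ uniformly as $t\to+\infty$ or as $|x|\to\infty$, forcing $\bm{W}^{-}\to\bm{1}$; (iv) is the mirror statement for $\bm{W}^{+}$ on bounded intervals as $t\to-\infty$. Property (vi) is obtained by repeating the construction with $\theta_1$ or $\theta_2$ sent to $\pm\infty$: the sandwich collapses onto a single shifted front, or onto $\bm{0}$ / $\bm{1}$, and uniqueness of the Cauchy problem identifies the limit. Monotonicity (vii) is inherited from monotonicity of $\bm{\Phi}$ in $z$: increasing $\theta_i$ raises $\bm{W}^{-}(t_n,\cdot)$ and $\bm{W}^{+}$ pointwise, and comparison propagates the order forward. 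For (i), $T$-periodicity of the coefficients makes $\bm{W}_{\theta_1,\theta_2}(\cdot+T,\cdot)$ another entire solution lying in the same sandwich, and the strong maximum principle applied to their difference gives the stated dichotomy. Property (viii) is a consequence of the $x\mapsto -x$ invariance of $\bm{W}^{\pm}$ when $\theta_1=\theta_2$, which is respected by the whole approximation scheme. Finally, (ix) is a phase-translation argument: the quantization $(\theta_1^{*}-\theta_1+\theta_2^{*}-\theta_2)/(-2cT)\in\mathbb{Z}$ is exactly what allows a joint shift of the two wave phases to be absorbed into an $(x,t)$-translation compatible with the $T$-period, giving the explicit $(t_0,x_0)$.

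The hardest part is the super-solution verification in the overlap zone, where both $(P_i,Q_i)$ are of unit order and the cross term $2P_1P_2-N_1(P_1Q_2+P_2Q_1)$ must be controlled uniformly in $t$; this is exactly where (C1) enters through $P/Q\ge N_1$ on the $(0,0)$-tail, supplemented by the exponential decay estimates of Theorems \ref{+infinity}--\ref{-infinity} on the $(1,1)$-tail. A secondary subtlety is ensuring that the limit $\bm{W}_{\theta_1,\theta_2}$ inherits enough uniqueness-type structure (the monotonicity in $\theta_i$, the symmetry, and the translation classification (ix)); this is resolved by choosing the approximants $\bm{W}_n$ so that each of these structural features is built in and therefore passes to the limit.
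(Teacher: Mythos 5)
Your subsolution (the maximum of two shifted fronts) and the overall scheme (solve from $t=-nT$, compare, extract a limit by parabolic estimates) match the paper, but your supersolution is where the argument breaks. For $\bm{W}^{+}=\min\{\bm\Phi_1+\bm\Phi_2,\bm 1\}$ with \emph{fixed} shifts, the computation you sketch is correct as far as it goes: since each shifted front is an exact solution, on the uncapped set
\begin{equation*}
\mathcal F_1=a_1p\bigl[2P_1P_2-N_1(t)(P_1Q_2+P_2Q_1)\bigr],\qquad
\mathcal F_2=b_2q\bigl[N_2(t)(P_1Q_2+P_2Q_1)-2Q_1Q_2\bigr],
\end{equation*}
and nonnegativity of these would require the pointwise inequalities $P\geq N_1(t)Q$ and $N_2(t)P\geq Q$ along the whole profile. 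These fail in general: as $z\to+\infty$ one has $P,Q\to1$, so $P/Q\to1$, while (A2) forces $N_1$ and $N_2$ to exceed $1$ at least on average; concretely, in the region where one front is near $(1,1)$ and the other is in its left tail, $\mathcal F_1\geq0$ would need roughly $(2-N_1)P_2\geq N_1Q_2$, which is impossible whenever $N_1(t)\geq2$ and otherwise demands a ratio bound far stronger than (C1), which only gives $P/Q\geq\eta_0$ on $z\leq0$ for \emph{some} (possibly small) $\eta_0$. So the interaction terms are not signed, and no capping at $\bm 1$ rescues this, because the failure occurs strictly below $\bm 1$. This is exactly the difficulty the paper's construction is designed to handle: the supersolution in Lemma \ref{super-solu} is the product-corrected profile $\bm\Phi_1+\bm\Phi_2-\bm\Phi_1\bm\Phi_2$ with \emph{time-dependent, accelerated} shifts $j_1(t),j_2(t)$ solving $j_i'(t)=-c+Ke^{\nu_3 j_1(t)}$; the extra drift produces the positive terms $Ke^{\nu_3 j_1}[(1-P_2)P_{1,z}+(1-P_1)P_{2,z}]$ and $Ke^{\nu_3 j_1}[(1-Q_2)Q_{1,z}+(1-Q_1)Q_{2,z}]$, which dominate the unsigned remainders $H_1,H_2$ thanks to the ratio estimates of Lemmas \ref{H1-estimate} and \ref{H2-estimate} (which in turn rest on Lemma \ref{basic-estimate}, i.e.\ on (C1) together with Theorems \ref{+infinity}--\ref{-infinity}). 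Without this mechanism your sandwich does not exist, and (ii)--(v) do not follow.

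Two secondary gaps: first, item (vi) is not just ``the sandwich collapses''; the two-sided bound only gives $\bm\Phi(t,x-ct+\omega_1)\leq\bm W\leq\bm\Phi(t,x+\tilde j_1)+\bm\Phi(t,-x+\tilde j_2)$, and to identify the limit as $\omega_2\to-\infty$ the paper runs a stability-type argument (Theorem \ref{convegence}) using the Bao--Wang perturbed-front sub/supersolutions $\bm\Phi(t,x-ct+\xi^\pm\pm\sigma_1\delta(1-e^{-\beta_1t}))\pm\delta\bm p\,e^{-\beta_1t}$ for the auxiliary system \eqref{auxi-sys}; moreover the whole construction requires $\omega_i\leq\varpi<0$, so arbitrary $\theta_i\in\mathbb R$ (including the limit $\theta_i\to+\infty$ in (vi)) must be reached by the time-translation reduction $\omega_i=\theta_i+cn^*T$ used in the paper's proof of Theorem \ref{ES}, which your proposal omits. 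Second, for (i) the strong maximum principle only converts an \emph{ordering} into the dichotomy; the ordering $\bm W(t+T,x)\geq\bm W(t,x)$ does not follow from both functions lying in the same sandwich, but from the monotone approximation step $\bm w^n(t+T,x)\geq\bm w^n(t,x)$, itself a comparison argument using $\bm{\underline w}(t+T,x)>\bm{\underline w}(t,x)$ (which uses $c<0$ and monotonicity of the front). Your treatments of (vii)--(ix) are essentially the paper's arguments and are fine once the sandwich is repaired.
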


\begin{theorem}\label{ES-c>0}
Assume (A1)-(A3).
Let $\bm\Psi(t,z)=(P(t,z),Q(t,z))$ be a traveling wave solution of \eqref{uv-trans-eq}
satisfying (C2) with $c>0.$
Then for any given constants $\theta_1,\theta_2 \in \mathbb R,$
system \eqref{uv-trans-eq} admits an entire solution
$\bm{\widetilde{W}}_{\theta_1,\theta_2}(t,x)
=(\widetilde{U}_{\theta_1,\theta_2}(t,x),\widetilde{V}_{\theta_1,\theta_2}(t,x))$
satisfying $\bm{0}<\bm{\widetilde{W}}_{\theta_1,\theta_2}(t,x)<\bm{1}$
and
\begin{description}
\item[(i)]
$\bm{\widetilde{W}}_{\theta_1,\theta_2}(t+T,x)=\bm{\widetilde{W}}_{\theta_1,\theta_2}(t,x)$
or
$\bm{\widetilde{W}}_{\theta_1,\theta_2}(t+T,x)<\bm{\widetilde{W}}_{\theta_1,\theta_2}(t,x)$
for all $(t,x)\in\mathbb R\times\mathbb R.$

\item[(ii)]
\begin{equation*}
\begin{split}
\lim_{t\to-\infty}&\left\{\sup_{x\geq 0}
{\left|\bm{\widetilde{W}}_{\theta_1,\theta_2}(t,x)-\bm\Psi(t,-x-ct+\theta_1)\right|} \right .\\
&+\left .\sup_{x\leq 0}
{\left|\bm{\widetilde{W}}_{\theta_1,\theta_2}(t,x)-\bm\Psi(t,x-ct+\theta_2)\right|}
\right\}
=0.
\end{split}
\end{equation*}

\item[(iii)]
$\mathop{\lim}\limits_{k\to+\infty}\mathop{\sup}\limits_{(t,x)\in[0,T]\times\mathbb R}
\left|\bm{\widetilde{W}}_{\theta_1,\theta_2}(t+kT,x)\right|=0.$

\item[(iv)]
$\mathop{\lim}\limits_{k\to-\infty}\mathop{\sup}\limits_{(t,x)\in[0,T]\times[a,b]}
\left|\bm{\widetilde{W}}_{\theta_1,\theta_2}(t+kT,x)-\bm{1}\right|=0$ for any  $a,b\in\mathbb{R}$ with $a<b$.

\item[(v)] $\mathop{\lim}\limits_{|x|\to+\infty}\mathop{\sup}\limits_{t\in[t_0,+\infty)}
\left|\bm{\widetilde{W}}_{\theta_1,\theta_2}(t,x)\right|=0$
for any $t_0\in\mathbb R.$

\item[(vi)]
In the sense of locally in $(t,x)\in\mathbb R\times\mathbb R,$
\begin{equation*}
\bm{\widetilde{W}}_{\theta_1,\theta_2}(t,x)
\text{ converges to }
\left\{{\begin{array}{*{20}{c}}
\begin{aligned}
&\bm\Psi(t,-x-ct+\theta_1) ~\text{ as }\theta_2\to+\infty,\\
&\bm\Psi(t,x-ct+\theta_2) ~\text{ as } \theta_1\to+\infty, \\
&\bm{1} ~\text{ as }\theta_1\to+\infty \text{ and } \theta_2\to+\infty,\\
&\bm{0} ~\text{ as }\theta_1\to-\infty \text{ or } \theta_2\to-\infty.
\end{aligned}
\end{array}} \right.
\end{equation*}
\end{description}

Moreover,
the assertions \rm\textbf{(vii)}-\rm\textbf{(ix)} in Theorem \ref{ES-c>0} are valid.
\end{theorem}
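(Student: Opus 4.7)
The strategy mirrors that of Theorem \ref{ES}, with the geometric roles of $\bm 0$ and $\bm 1$, of the sign of $c$, and of (C1) vs (C2) interchanged throughout. The key structural observation that makes the whole program run is that the transformed system \eqref{uv-trans-eq} is cooperative on the invariant rectangle $[0,1]\times[0,1]$, since $\partial_v f_1=a_1(t)p(t)N_1(t)u\geq 0$ and $\partial_u f_2=b_2(t)q(t)N_2(t)(1-v)\geq 0$; both the parabolic comparison principle and the classical sub-/supersolution method are therefore available.

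For $c>0$ the entire solution is expected to look, as $t\to-\infty$, like a pulse in the $\bm 1$-state bounded by the two inward-moving fronts $\bm\Psi(t,-x-ct+\theta_1)$ on $x\geq 0$ and $\bm\Psi(t,x-ct+\theta_2)$ on $x\leq 0$, each of which is itself a solution of \eqref{uv-trans-eq}. I would introduce a nondecreasing $C^1$ phase correction $\rho:\mathbb R\to[0,\rho_\infty]$ with $\rho(-\infty)=0$ and denote by $\bm\Psi_j^{\pm}(t,x)$ the same fronts evaluated at phase arguments shifted by $\pm\rho(t)$. A natural pair of barriers, read componentwise, is
\begin{equation*}
\bar{\bm{W}}(t,x)=\min\bigl\{\bm\Psi_1^{+},\bm\Psi_2^{+}\bigr\},\qquad
\underline{\bm{W}}(t,x)=\max\bigl\{\bm 0,\,\bm\Psi_1^{-}+\bm\Psi_2^{-}-\bm 1\bigr\},
\end{equation*}
both of which exhibit the right $\bm 1$-in-the-middle and $\bm 0$-on-the-sides profile. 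Verifying that $\bar{\bm W}$ is a supersolution and $\underline{\bm W}$ a subsolution reduces, after the usual cancellations (each $\bm\Psi_j^{\pm}$ is already a solution), to estimating a coupling error in the overlap region $\{ct-\theta_2\ll x\ll -ct+\theta_1\}$ where both fronts are near $\bm 1$. There the sharp expansion from Theorem \ref{+infinity} gives $\bm 1-\bm\Psi(t,z)=O(e^{\nu_2 z})$, with an extra factor $|z|$ in the resonant case $\nu_1=\nu_2$, and hypothesis (C2) controls the $Q$-component of the error by the $P$-component so that the vector estimate reduces to a scalar one. The residual can then be bounded by $Ke^{-\mu c|t|}$ for some $\mu>0$ and absorbed by choosing $\rho$ to satisfy $\rho'(t)\geq Me^{-\mu c|t|}$; since $c>0$, this ODE admits a bounded nondecreasing solution on $\mathbb R$.

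Once the barriers are in hand, I would fix $t_n\to-\infty$, solve the Cauchy problem for \eqref{uv-trans-eq} on $[t_n,+\infty)\times\mathbb R$ with datum $\underline{\bm W}(t_n,\cdot)$, use comparison to trap the resulting solution $\bm W_n$ between $\underline{\bm W}$ and $\bar{\bm W}$, and extract a locally convergent subsequence by parabolic $L^p$ and Schauder estimates. The limit $\bm{\widetilde W}_{\theta_1,\theta_2}$ is the required entire solution, and the squeeze automatically gives (ii). Property (iii) follows from $\bar{\bm W}(t+kT,x)\to\bm 0$ uniformly as $k\to+\infty$ (the inward fronts eventually cover the line with $\bm 0$); (iv) from $\underline{\bm W}(t+kT,x)\to\bm 1$ locally as $k\to-\infty$; (v) from the spatial decay of $\bar{\bm W}$; (vi) by continuous dependence as $\theta_j\to\pm\infty$; (vii) from strict parabolic comparison together with the monotone dependence of $\underline{\bm W},\bar{\bm W}$ on $\theta_j$; (viii) from uniqueness combined with the $x\mapsto -x$ symmetry of the barriers at $\theta_1=\theta_2$; (ix) from the joint shift invariance of the barriers under $(t,x)\mapsto(t+t_0,x+x_0)$ precisely when the resonance $(\theta_1^*-\theta_1)+(\theta_2^*-\theta_2)=-2ct_0$ with $t_0/T\in\mathbb Z$ holds; and (i) by comparing $\bm{\widetilde W}(\cdot+T,\cdot)$ with $\bm{\widetilde W}$, both asymptotic to the same pair of fronts as $t\to-\infty$, and using (ii) to rule out strict inequality in the wrong direction.

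The main technical obstacle is the construction and verification of the barriers: in the periodic nonautonomous system setting the exponential tails from Theorems \ref{+infinity} and \ref{-infinity} carry $T$-periodically modulated eigenfunctions $\phi_i,\tilde\phi_1,\psi_i,\tilde\psi_2$, and the three sub-cases $\nu_1<\nu_2$, $\nu_1=\nu_2$, $\nu_1>\nu_2$ must be handled separately. The resonant case $\nu_1=\nu_2$ is the most delicate: the extra algebraic factor $|z|$ alters the form of the coupling estimate and forces a slightly different ODE for $\rho$, and one must verify that the sub-/supersolution inequalities hold globally in $(t,x)$ and not merely in a neighborhood of the front overlap.
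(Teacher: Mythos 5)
Your overall scheme (barriers built from the two incoming fronts, a time-dependent phase correction, Cauchy problems from $t_n\to-\infty$, parabolic estimates and comparison) is the right skeleton, and your upper barrier $\min\{\bm\Psi_1^{+},\bm\Psi_2^{+}\}$ is fine: each phase-retarded/advanced reflected front is itself a super-solution when $\rho'\geq 0$, and the componentwise minimum of super-solutions is a super-solution for the cooperative system. The genuine gap is your lower barrier $\underline{\bm W}=\max\{\bm 0,\,\bm\Psi_1^{-}+\bm\Psi_2^{-}-\bm 1\}$ together with the claim that the residual is $O(e^{-\mu c|t|})$. Writing $\bar P_i=1-P_i$, $\bar Q_i=1-Q_i$ for the two shifted fronts, a direct computation gives for the $u$-equation
\begin{equation*}
\mathcal F_1\bigl(t,P_1+P_2-1,\,Q_1+Q_2-1\bigr)
= a_1p\bigl[\,2\bar P_1\bar P_2-N_1(\bar P_1\bar Q_2+\bar P_2\bar Q_1)\,\bigr]-\rho'(t)\,(P_{1,z}+P_{2,z}),
\end{equation*}
and an analogous identity with $b_2q\bigl[N_2(\bar P_1\bar Q_2+\bar P_2\bar Q_1)-2\bar Q_1\bar Q_2\bigr]$ for the $v$-equation. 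The dangerous region is not the middle of the overlap but a neighborhood of the interface where $P_1+P_2-1$ changes sign: there one front sits in its $0$-tail ($\bar P_1,\bar Q_1\approx 1$, $P_1\approx \bar P_2$) and the other in its $1$-tail, so the error is of order $\bar P_2$ while $P_{1,z}+P_{2,z}$ is also of order $\bar P_2$ (by the analogues of \eqref{PPz-} and \eqref{PPz+}). Hence the ratio error/derivative is bounded below by a positive constant for every large negative $t$ (the interfaces persist near $x\approx\pm(c|t|+O(1))$), unless one imposes extra sign conditions such as $b_1q(1+\eta_1)\geq 2a_1p$ that (A1)--(A3) and (C2) do not give. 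Your ODE $\rho'\geq Me^{-\mu c|t|}$ therefore cannot absorb it; one would need $\rho'$ bounded below, i.e.\ an unbounded drift, which destroys (ii). The correct lower barrier in the original variables is the componentwise product $\bm\Psi_1\bm\Psi_2$ (equivalently $\bm 1-[(\bm 1-\bm\Psi_1)+(\bm 1-\bm\Psi_2)-(\bm 1-\bm\Psi_1)(\bm 1-\bm\Psi_2)]$), whose error terms carry the extra cancelling factors $(1-P_i)$, $(1-Q_i)$ that are exploited in Lemmas \ref{H1-estimate} and \ref{H2-estimate}; the sum-minus-one does not have them.

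You should also know that the paper does none of this work for $c>0$: it observes that the involution $(u,v,x)\mapsto(1-u,1-v,-x)$ turns \eqref{uv-trans-eq} into the system \eqref{uv-c>0}, which has exactly the same cooperative bistable structure, turns $\bm\Psi$ into an increasing front $\bm{\widetilde\Psi}(t,z)=\bm 1-\bm\Psi(t,-(x+ct))$ of speed $\tilde c=-c<0$, and turns (C2) into (C1); then Theorems \ref{existence} and \ref{convegence} apply verbatim, and $\bm{\widetilde W}_{\theta_1,\theta_2}=\bm 1-\bm W$ yields all assertions (i)--(ix) with the inequalities and limits reversed. If you want a direct construction instead, you must redo the barrier with the product form (and re-prove the analogues of Lemmas \ref{basic-estimate}--\ref{H2-estimate} using the $z\to+\infty$ asymptotics of Theorem \ref{+infinity} and (C2)); but the reduction by symmetry is both shorter and avoids the case distinction $\nu_1<\nu_2$, $\nu_1=\nu_2$, $\nu_1>\nu_2$ you were worried about.
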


\begin{remark}{\rm
Notice that the entire solutions obtained in Theorems \ref{ES} and \ref{ES-c>0} are ``\emph{annihilating-front}" type, which behave as two periodic \textbf{bistable} traveling  fronts approaching each other from both sides of the $x-$axis as $t\rightarrow-\infty$ and annihilating as time increases. It is worthy to mention that Morita and Ninomiya \cite{morita2006} have constructed another two types of ``\emph{merging-front}" entire solutions for some standard autonomous \textbf{bistable} reaction-diffusion equations. One behaves as two \textbf{monostable} fronts approaching each other from both sides of the $x-$axis and merging and converging
to a single \textbf{bistable} front while the other behaves as a \textbf{monostable} front merging with a \textbf{bistable} front and one chases another from the same side of $x-$axis.
Hence it is also interesting to explore these kinds of ``\emph{merging-front}" entire solutions for the nonautonomous \textbf{bistable} system \eqref{uv-trans-eq}, which left as our further consideration.}
\end{remark}

\begin{remark}{\rm
Recently, there are many results on \textbf{nonlocal dispersal} equations, we refer to \cite{amrt2010,bates, bls2016,KLS2010,li2010,LWZ2016,LZZ2015,sun2011,WR2015}. Naturally, it is interesting and meaningful to consider the nonlocal version of the time periodic Lotka-Volterra competition-diffusion system \eqref{uv-trans-eq}:
\begin{equation}\label{dlw}
\begin{cases}
{u_t}=J*u(x,t)-u(x,t)+u(r_1(t)-a_1(t)u-b_1(t)v),\quad t>0,~x\in \mathbb R,\\
{v_t}=d(J*v(x,t)-v(x,t))+v(r_2(t)-a_2(t)u-b_2(t)v),\quad t>0,~x\in \mathbb R,
\end{cases}
\end{equation}
where the nonlocal dispersal operator is defined by
$$
(\mathcal{D}u)(x,t)=(J*u)(x,t)-u(x,t)=\int_{\mathbb{R}}J(x-y)[u(y,t)-u(x,t)]dy.
$$
We leave it to the interested readers.
}
\end{remark}

The rest of the paper is organized as follows.
In Section 2, we investigate the exact exponential decay rates of
the periodic traveling wave front of \eqref{uv-trans-eq} as it approaches its limiting states.
Section 3 is devoted to a pair of sub- and supersolutions for constructing entire solutions.
In Section 4, we establish the existence and some qualitative properties
of entire solutions by a comparing argument.

\section{Asymptotic behavior of periodic traveling fronts}

In this section, we study the asymptotic behavior of
periodic traveling wave fronts near the limiting states.
We first consider the case of $z\to+\infty.$

Let $u^\star(t,x)=1-u(t,x)$ and $v^\star(t,x)=1-v(t,x),$ then
\eqref{uv-trans-eq} is transformed into the following cooperative system
(omitting $\star$ for simplicity)
\begin{equation}\label{uv-1-0}
\begin{cases}
u_t=u_{xx}+g(t,u,v),\\
v_t=dv_{xx}+h(t,u,v),
\end{cases}
\end{equation}
where
\begin{equation*}
\begin{cases}
g(t,u,v)=-(1-u)[a_1(t)p(t)u-b_1(t)q(t)v],\\
h(t,u,v)=-v[a_2(t)p(t)(1-u)-b_2(t)q(t)(1-v)].
\end{cases}
\end{equation*}
The corresponding traveling wave solution $(U(t,z),V(t,z))$ satisfies
\begin{equation}\label{UV}
\begin{cases}
U_t=U_{zz}+cU_z+g(t,U,V),\\
V_t=dV_{zz}+cV_z+h(t,U,V),\\
(U(t,z),V(t,z))=(U(t+T,z),V(t+T,z)),\\
\mathop{\lim}\limits_{z\to -\infty }(U,V)=(1,1),
\mathop{\lim}\limits_{z\to +\infty }(U,V)=(0,0).
\end{cases}
\end{equation}

Denote
$$\kappa_1=-\overline{g_u(t,0,0)}=\overline{a_1p},
~\nu_1=\frac{-c-\sqrt{c^2+4\kappa_1}}{2}<0,
~\phi_1(t)=e^{\int_0^t{g_u(s,0,0)ds}+\kappa_1t},
$$
and
$$\kappa_2=-\overline{h_v(t,0,0)}=\overline{a_2p-b_2q},
~\nu_2=\frac{-c-\sqrt{c^2+4d\kappa_2}}{2d}<0,
~\phi_2(t)=e^{\int_0^t{h_v(s,0,0)ds}+\kappa_2t}.$$
Due to $g(t,0,0)=h(t,0,0)=g(t,1,1)=h(t,1,1)=0,$ system \eqref{UV} can be written as
\begin{equation*}
\begin{cases}
U_t=U_{zz}+cU_z+U\int_0^1{g_u(t,\tau U,\tau V)}d\tau+V\int_0^1{g_v(t,\tau U,\tau V)}d\tau,\\
V_t=dV_{zz}+cV_z+U\int_0^1{h_u(t,\tau U,\tau V)}d\tau+V\int_0^1{h_v(t,\tau U,\tau V)}d\tau.
\end{cases}
\end{equation*}
Since $(U(\cdot,z),V(\cdot,z))$ is periodic in $t\in\mathbb R,$
and $(U,V)(t,z)$ is positive and bounded for any $(t,x)\in\mathbb R\times\mathbb R,$
the Harnack inequality for cooperative parabolic systems (see \cite{foldes2009,zhao2011,bao2013}) implies that there exists a positive constant $N$ such that
\begin{equation}\label{har-estimate}
(U(t,z),V(t,z))\leq N(U(t^\prime,z),V(t^\prime,z))
\text{ for any } z,~t,~t^\prime\in\mathbb{R}.
\end{equation}

\indent
We first give the a priori exponential estimates of periodic traveling wave fronts
of system \eqref{uv-1-0} as $z\to+\infty$ as follows.

\begin{lemma}\label{general-estimate-+}
Assume (A1)-(A3). Let $(U(t,z),V(t,z))$ be a solution of \eqref{UV}.
Then for any $\epsilon\in\left(0,\min\left\{1,\frac{\kappa_2}{C_1^+}\right\}\right),$
there exist positive constants $K_i,~K_i^\prime~(i=1,2)$ such that
\begin{equation}\label{general-estimate-UV+}
{K_1}{e^{\nu_1^-z}}\leq U(t,z)\leq K_1^\prime{e^{\nu_1^+z}},
\quad {K_2}{e^{\nu_{2,\epsilon}^-z}}\leq V(t,z)
\leq K_2^\prime{e^{\nu_{2,\epsilon}^+z}}
\end{equation}
for any $(t,z)\in\mathbb{R}\times[0,+\infty)$,
where
$$
\nu_{2,\epsilon}^\pm=\frac{-c-\sqrt{c^2+4d({\kappa_2}\mp{C_1^\pm}\epsilon)}}{2d},~~
0>\nu_1^+>\max\{\nu_1,\nu_{2,\epsilon}^+\},~~
\nu_1^-<\nu_1,
$$
and
$${C_1^+}=\mathop{\max}\limits_{[0,T]}{a_2}(t)p(t), ~~{C_1^-}=\mathop{\max}\limits_{[0,T]}{b_2}(t)q(t).
$$
\end{lemma}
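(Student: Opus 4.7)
The plan is to exploit the uniform convergence $(U(t,z),V(t,z))\to(0,0)$ as $z\to+\infty$ to treat system \eqref{UV} on a large right half-line as a perturbation of the linearisation at $(0,0)$, and then to build explicit exponential super- and subsolutions of the form $Ke^{\nu z}\phi(t)$ with $\phi$ positive and $T$-periodic. Requiring $\phi$ to be periodic forces $\nu$ to be the root of a quadratic with an $\epsilon$-correction, which is exactly where the exponents $\nu_{2,\epsilon}^\pm$ (and the admissible range for $\nu_1^\pm$) come from.

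First, fix $\epsilon\in(0,\min\{1,\kappa_2/C_1^+\})$ and, using the boundary conditions in \eqref{UV}, pick $z_0>0$ so that $0\le U(t,z),V(t,z)\le\epsilon$ on $\mathbb{R}\times[z_0,+\infty)$. Since
\begin{equation*}
h(t,U,V)=h_v(t,0,0)V+a_2(t)p(t)UV-b_2(t)q(t)V^2,
\end{equation*}
the linearisation of the $V$-equation at $(0,0)$ is decoupled from $U$. Dropping $-b_2qV^2\le0$ and bounding $a_2pU\le C_1^+\epsilon$ for the upper estimate, and dropping $a_2pUV\ge0$ and bounding $b_2qV\le C_1^-\epsilon$ for the lower one, on $[z_0,+\infty)$ we get
\begin{equation*}
(h_v(t,0,0)-C_1^-\epsilon)V\;\le\;V_t-dV_{zz}-cV_z\;\le\;(h_v(t,0,0)+C_1^+\epsilon)V.
\end{equation*}
Define $\overline{V}(t,z)=K_2'e^{\nu_{2,\epsilon}^+z}\phi_{\epsilon,+}(t)$, where $\phi_{\epsilon,+}$ is the positive $T$-periodic solution of $\phi'=(d\nu^2+c\nu+h_v(t,0,0)+C_1^+\epsilon)\phi$ at $\nu=\nu_{2,\epsilon}^+$. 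Periodicity of $\phi_{\epsilon,+}$ is equivalent to the mean of the coefficient vanishing, i.e.\ $d\nu^2+c\nu-\kappa_2+C_1^+\epsilon=0$, whose negative root is precisely $\nu_{2,\epsilon}^+$. A symmetric construction yields $\underline{V}=K_2e^{\nu_{2,\epsilon}^-z}\phi_{\epsilon,-}(t)$. Since $V(\cdot,z_0)>0$ by the strong maximum principle, we may choose $K_2$ small and $K_2'$ large so that $\underline{V}\le V\le\overline{V}$ at $z=z_0$; a parabolic comparison argument on $(\mathbb{R}/T\mathbb{Z})\times[z_0,+\infty)$ then gives the claimed two-sided estimate for $V$, with the extension to $z\in[0,+\infty)$ handled by absorbing a bounded factor into $K_2,K_2'$.

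For $U$ we use
\begin{equation*}
g(t,U,V)=g_u(t,0,0)U+b_1(t)q(t)V+a_1(t)p(t)U^2-b_1(t)q(t)UV,
\end{equation*}
and treat $b_1qV$ as nonnegative forcing, bounded above by $K_2'b_1q\phi_{\epsilon,+}e^{\nu_{2,\epsilon}^+z}$ from the previous step. We pick $\nu_1^+$ strictly in $(\max\{\nu_1,\nu_{2,\epsilon}^+\},0)$, small enough (in $\epsilon$) that the corresponding mean-value quadratic $(\nu_1^+)^2+c\nu_1^+-\kappa_1(1-\epsilon)$ remains negative; then $\overline{U}=K_1'e^{\nu_1^+z}\tilde\phi_+(t)$ with $\tilde\phi_+$ the associated periodic factor serves as a supersolution, the slack from the negative quadratic absorbing the forcing, and the strict gap $\nu_1^+>\nu_{2,\epsilon}^+$ making the resulting inequality uniform over $[z_0,+\infty)$. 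For the lower bound we discard $b_1qV\ge0$ and bound the remaining nonlinear terms by $-C\epsilon U$, producing a subsolution $\underline{U}=K_1e^{\nu_1^-z}\tilde\phi_-(t)$ with $\nu_1^-<\nu_1$ the negative root of the corresponding perturbed quadratic. A final comparison concludes the proof.

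The delicate point is the comparison argument on the unbounded half-line, where both the solution and the candidate super-/subsolution tend to zero at $+\infty$, so that the classical parabolic maximum principle needs its supremum to be actually attained. The standard remedy is to exploit $T$-periodicity in $t$ and truncate at a large $z_1$ where the difference is below half its supremum; the problem then reduces to a compact cylinder on which the weak/strong maximum principle for linear parabolic inequalities applies directly. A secondary technicality is checking that the admissible interval $(\max\{\nu_1,\nu_{2,\epsilon}^+\},0)$ for $\nu_1^+$ is nonempty, which follows because the positive root of the perturbed $U$-quadratic is strictly positive and $\epsilon$ can be taken arbitrarily small.
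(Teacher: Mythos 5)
Your proof is essentially correct, but it takes a genuinely different route from the paper's. You work directly with the parabolic problem on the half-cylinder, building separated super- and subsolutions of the form $Ke^{\nu z}\phi(t)$ whose positive $T$-periodic factor $\phi$ is pinned down by the zero-mean (Floquet-type) condition, which is indeed exactly where the perturbed exponents $\nu_{2,\epsilon}^{\pm}$ come from; the comparison is then run in the $t$-periodic cylinder, where periodicity plus decay as $z\to+\infty$ guarantees that a would-be negative infimum is attained, and normalizing the difference by the periodic factor turns the zeroth-order coefficient into the positive constant $\kappa_2\mp C_1^{\pm}\epsilon$ (respectively $\kappa_1$-type constants for $U$), which is what makes the contradiction at the interior extremum work — this is worth writing out, since $h_v(t,0,0)+C_1^{+}\epsilon$ itself has no pointwise sign. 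The paper instead averages out time first: it introduces the weighted means $\hat u(z)=\int_0^T U(t,z)\phi_1^{-1}(t)\,dt$, $\hat v(z)=\int_0^T V(t,z)\phi_2^{-1}(t)\,dt$, which satisfy the scalar ODE system \eqref{uv-hat} with constant principal part, compares them with pure exponentials $\rho e^{\nu z}$ via the elliptic maximum principle on $[M_\epsilon,+\infty)$, and then uses the Harnack inequality \eqref{har-estimate} to convert the bounds on the time-averages into the pointwise-in-$t$ bounds \eqref{general-estimate-UV+}. Your route dispenses with the Harnack inequality and yields pointwise bounds directly, at the price of the more delicate parabolic comparison on an unbounded periodic cylinder; the paper's route keeps the comparison at the level of one-dimensional ODEs but needs Harnack to return from averages to pointwise estimates.

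One point you should tighten: the exponents $\nu_1^{+}\in(\max\{\nu_1,\nu_{2,\epsilon}^{+}\},0)$ and $\nu_1^{-}<\nu_1$ must be allowed to be \emph{arbitrary} in those ranges (this arbitrariness is used later, e.g.\ in the proof of Theorem \ref{1>2} where $\nu_1^{-}$ is taken $\geq\gamma$ and $\nu_1^{+}$ is taken close to $\nu_1$). As written, your $U$-estimates use the same fixed $\epsilon$ to control the quadratic terms, so your upper bound only covers $\nu_1^{+}$ above the negative root of $\nu^2+c\nu-\kappa_1(1-\epsilon)$, and your lower bound is stated for the single exponent given by the root of the $\epsilon$-perturbed quadratic rather than for every $\nu_1^{-}<\nu_1$. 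The fix is the one the paper implements through its choices of $M'$ and $M''$: for the $U$-estimates, choose the smallness threshold (equivalently, move the truncation point $z_0$ farther out, using $U,V\to0$ uniformly) \emph{after} fixing $\nu_1^{\pm}$, so the perturbation is dominated by the slack $(\nu_1^{+})^2+c\nu_1^{+}-\kappa_1<0$, respectively $(\nu_1^{-})^2+c\nu_1^{-}-\kappa_1>0$; this does not disturb the $V$-exponents $\nu_{2,\epsilon}^{\pm}$, which are the only place the lemma's $\epsilon$ appears.
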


\begin{proof}
Note that
$\frac{U(t,z)}{\phi_1(t)}$ and
$\frac{V(t,z)}{\phi_2(t)}$ are $T-$periodic in $t$
for any $z\in \mathbb R$, define
$$
\hat{u}(z)=\int_0^T{\frac{U(t,z)}{\phi_1(t)}dt},
\quad\hat{v}(z)=\int_0^T{\frac{V(t,z)}{\phi_2(t)}}dt
\text{ for any } z\in\mathbb{R}.
$$
Then direct calculations yield that
\begin{equation}\label{uv-hat}
\hspace{-0.7em}
\begin{cases}
\hat{u}_{zz}+c\hat{u}_z-\kappa_1\hat{u}+\int_0^T{\frac{b_1(t)q(t)V(t,z)}
{\phi_1(t)}dt}+\int_0^T{\frac{a_1(t)p(t){U^2}(t,z)-b_1(t)q(t)U(t,z)V(t,z)}{\phi_1(t)}dt}=0,\\
d\hat{v}_{zz}+c\hat{v}_z-\kappa_2\hat{v}+\int_0^T{\frac{a_2(t)p(t)U(t,z)V(t,z)
-b_2(t)q(t){V^2}(t,z)}{\phi_2(t)}dt}=0.
\end{cases}
\end{equation}
Since $\mathop{\lim}\limits_{z\to+\infty}(U(t,z),V(t,z))=(0,0)$ uniformly in $t\in\mathbb{R},$
for any $0<\epsilon<\min\left\{1,\frac{\kappa_2}{C_1^+}\right\},$
we can choose $M_\epsilon\gg1$ such that
$$(0,0)<(U(t,z),V(t,z))\leq(\epsilon,\epsilon)\text{ for any } (t,z)\in [0,T]\times [M_\epsilon,+\infty).
$$

Let ${V^+}(z)=\rho{e^{\nu_{2,\epsilon}^+z}}$
with $\nu_{2,\epsilon}^+=\frac{-c-\sqrt{{c^2}+4d({\kappa_2}-{C_1^+}\epsilon)}}{2d}<0$
and $\rho$ some positive constant,
then ${V^+}(z)$ is a solution of the linear equation
\begin{equation}\label{upper-eq}
d{v_{zz}}+c{v_z}-{\kappa_2}v +{C_1^+}\epsilon v=0.
\end{equation}
As $\hat{v}$ is bounded, we can choose $\rho>0$ large enough such that
$\hat{v}(M_\epsilon)\leq\rho{e^{\nu_{2,\epsilon}^+M_\epsilon}}$. In addition,
it follows from the second equation of \eqref{uv-hat} that
\begin{equation*}
\begin{aligned}
0&=d\hat{v}_{zz}+c\hat{v}_z-{\kappa_2}\hat{v}
+\int_0^T{\frac{a_2(t)p(t)U(t,z)V(t,z)-b_2(t)q(t){V^2}(t,z)}{\phi_2(t)}dt} \\
& \leq d\hat{v}_{zz}+ c\hat{v}_z-{\kappa_2}\hat{v}+\int_0^T{\frac{a_2(t)p(t)U(t,z)V(t,z)}{\phi_2(t)}dt}\\
&\leq d\hat{v}_{zz}+ c\hat{v}_z-{\kappa_2}\hat{v}+{C_1^+}\epsilon\hat{v}
\end{aligned}
\end{equation*}
for any $z\in [M_\epsilon,+\infty)$, and hence $\hat{v}(z)$ is a subsolution of \eqref{upper-eq} in $[M_\epsilon,+\infty).$
The maximum principle further yields that
$\hat{v}(z)\leq\rho{e^{\nu_{2,\epsilon}^+z}}$ for any $z\in [M_\epsilon,+\infty )$
by virtue of
$\mathop{\lim}\limits_{z\to+\infty}\hat{v}(z)=\mathop{\lim}\limits_{z\to+\infty}{V^+}(z)=0,$
which together with the Harnack inequality \eqref{har-estimate} and the definition of
$\hat{v}(z)$ implies that
there exists some $K_2^\prime>0$ such that
for any $(t,z)\in \mathbb{R}\times [0,+\infty),$
there hold $V(t,z)\leq{K_2^\prime}{e^{\nu_{2,\epsilon}^+z}}.$
Similarly, in view of
$$\int_0^T{\frac{{b_2(t)q(t){V^2}(t,z)}}{\phi_2(t)}dt}\leq{C_1^-}\epsilon\hat{v}
\text{ for any } z\in[M_\epsilon,+\infty),
$$
we can prove that
$V(t,z)\geq{K_2}{e^{\nu_{2,\epsilon}^-z}}$ for any
$(t,z)\in\mathbb{R}\times[0,+\infty)$
and some $K_2>0.$

We now consider $U(t,z).$
Note that $V(t,z)\leq{K_2^\prime}{e^{\nu_{2,\epsilon}^+z}}$
for any $(t,z)\in \mathbb{R}\times [0,+\infty),$
the Harnack inequality \eqref{har-estimate} implies that
there exist some $M_1,~M_2>0$ such that
\begin{equation*}
\int_0^T{\frac{b_1(t)q(t)V(t,z)}{\phi_1(t)}dt}\leq {M_1}{e^{\nu_{2,\epsilon}^+z}},
~~\int_0^T{\frac{a_1(t)p(t)U^2(t,z)}{\phi_1(t)}dt}\leq {M_2}(\hat{u}(z))^2
\end{equation*}
for any $z\in [0,+\infty)$. For any $0>\nu_1^+>\max\{\nu_1,\nu_{2,\epsilon}^+\},$ let
$L:=-{(\nu_1^+)^2}-c\nu_1^++\kappa_1>0.$
Then there exists some $M^\prime>0$ such that
${M_2}\hat{u}(z)\leq\frac{1}{2}\min\{L,\kappa_1\}$
for any $z\in[M^\prime,+\infty)$
since $\mathop{\lim}\limits_{z\to+\infty}\hat{u}(z)=0.$
It follows from the first equation of \eqref{uv-hat} that
\begin{equation*}
\begin{aligned}
0&=\hat{u}_{zz}+c\hat{u}_z-\kappa_1\hat{u}+\int_0^T{\frac{b_1(t)q(t)V(t,z)}{\phi_1(t)}dt}\\
&\quad+\int_0^T{\frac{a_1(t)p(t){U^2}(t,z)-b_1(t)q(t)U(t,z)V(t,z)}{\phi_1(t)}dt}\\
&\leq\hat{u}_{zz}+c\hat{u}_z-\kappa_1\hat{u}
+{M_1}{e^{\nu_{2,\epsilon}^+z}}+\frac{L}{2}\hat{u}
\end{aligned}
\end{equation*}
for any $z\in [M^\prime,+\infty),$
that is, $\hat{u}$ is a subsolution of equation
\begin{equation}\label{sup-eq}
-{u_{zz}}-c{u_z}+\kappa_1 u-\frac{L}{2}u-{M_1}{e^{\nu_{2,\epsilon}^+z}}=0
\text{ for any }z\in [M^\prime,+\infty).
\end{equation}
Let ${U^+}(z)=\delta{e^{\nu_1^+z}}$ with
$\delta\geq\frac{{2{M_1}}}{L}$ large enough such that
$\hat{u}(M^\prime)\leq\delta{e^{\nu_1^+M^\prime}},$ then
\begin{equation*}
\begin{aligned}
-&{U^+_{zz}}-c{U^+_z}+\kappa_1 U^+-\frac{L}{2}U^+-{M_1}{e^{\nu_{2,\epsilon}^+z}}\\
=&\left[-{(\nu_1^+)^2}-c\nu_1^++\kappa_1-\frac{L}{2}\right]\delta{e^{\nu_1^+z}}
-M_1e^{\nu_{2,\epsilon}^+z}\\
=&\frac{L}{2}\delta{e^{\nu_1^+z}}-M_1e^{\nu_{2,\epsilon}^+z}\geq 0
\end{aligned}
\end{equation*}
for any $z\in[M^\prime,+\infty),$
which implies that ${U^+}(z)$ is a supersolution of \eqref{sup-eq}.
The maximum principle then shows that
$\hat{u}(z)\leq \delta{e^{\nu_1^+z}}$ for any $z\in [M^\prime,+\infty).$
On the other hand, there exists ${M_3}>0$ such that
$$
\int_0^T {\frac{b_1(t)q(t)U(t,z)V(t,z)}{\phi_1(t)}dt}\leq {M_3}{e^{\nu_{2,\epsilon}^+z}}\hat{u} \text{ for any }  z\in [0,+\infty),
$$
we then see
\begin{equation*}
\begin{aligned}
0&=\hat{u}_{zz}+ c\hat{u}_z-\kappa_1\hat{u}+\int_0^T {\frac{b_1(t)q(t)V(t,z)}{\phi_1(t)}dt}\\
&\quad+\int_0^T{\frac{a_1(t)p(t){U^2}(t,z)-b_1(t)q(t)U(t,z)V(t,z)}{\phi_1(t)}dt}\\
&\geq\hat{u}_{zz}+c\hat{u}_z-\kappa_1\hat{u}-{M_3}{e^{\nu_{2,\epsilon}^+z}}\hat{u}
\end{aligned}
\end{equation*}
for any $z\in[0,+\infty),$
that is, $\hat{u}(z)$ is a supersolution of the following equation
\begin{equation}\label{super-eq}
{U_{zz}}+cU_z-\kappa_1U-{M_3}{e^{\nu_{2,\epsilon}^+z}}U=0 \text{ for any }  z\in [0,+\infty).
\end{equation}
Since ${(\nu_1^-)^2}+c\nu_1^--\kappa_1>0$ for any $\nu_1^-<\nu_1<0,$
we can choose $M^{\prime\prime}\geq M^\prime$ such that
$$
{e^{\nu_{2,\epsilon}^+M^{\prime\prime}}}\leq\frac{{(\nu_1^-)^2}+c\nu_1^--\kappa_1}{M_3}.
$$
Let ${U^-}(z)=\beta{e^{\nu_1^-z}}$ with $\beta$ satisfying $\beta{e^{\nu_1^-M^{\prime\prime}}}\leq\hat{u}(M^{\prime\prime}),$
then
\begin{equation*}
U^-_{zz}+cU^-_z-\kappa_1U^--{M_3}{e^{\nu_{2,\epsilon}^+z}}U^-
\geq\left[{(\nu_1^-)^2}+c\nu_1^--\kappa_1-{M_3}{e^{\nu_{2,\epsilon}^+M^{\prime\prime}}}\right]
\beta{e^{\nu_1^-z}}\geq 0
\end{equation*}
for any $z\in [M^{\prime\prime},+\infty),$
that is, ${U^-}(z)$ is a subsolution of \eqref{super-eq}.
Again using the maximum principle, we have
$\hat{u}(z)\geq \beta{e^{\nu_1^-z}}$
for any $z\in [M^{\prime\prime},+\infty).$
The same argument as above shows that there exist some $K_1,~K_1^\prime>0$ such that
${K_1}{e^{\nu_1^-z}}\leq U(t,z)\leq K_1^\prime{e^{\nu_1^+z}}$ for any
$(t,z)\in\mathbb R\times[0,+\infty).$ This ends all the proof.
\end{proof}

Denote
$$\kappa_3=\overline{b_1q-a_1p},
~\nu_3=\frac{-c+\sqrt{c^2+4\kappa_3}}{2}>0
\text{ and }
\kappa_4=\overline{b_2q},
~\nu_4=\frac{-c+\sqrt{c^2+4d\kappa_4}}{2d}>0.$$
Similarly, we can prove the a priori exponential estimates of
periodic traveling wave fronts of \eqref{uv-1-0} as $z\to-\infty$
as follows.

\begin{lemma}\label{general-estimate--}
Assume (A1)-(A3). Let $(U(t,z),V(t,z))$ be a solution of \eqref{UV}.
Then for any $\epsilon\in\left(0,\min\left\{1,\frac{\kappa_3}{C_2^+}\right\}\right),$
there exist positive constants $K_i,~K_i^\prime~(i=3,4)$ such that
\begin{equation}\label{general-estimate-UV-}
{K_3}{e^{\nu_{3,\epsilon}^-z}}\leq 1-U(t,z)\leq K_3^\prime{e^{\nu_{3,\epsilon}^+z}},
\quad {K_4}{e^{\nu_4^+z}}\leq 1-V(t,z)\leq K_4^\prime{e^{\nu_4^-z}},
\end{equation}
for any $(t,z)\in\mathbb{R}\times(-\infty,0],$ where
$$\nu_{3,\epsilon}^\pm=\frac{-c+\sqrt{c^2+4({\kappa_3}\mp{C_2^\pm}\epsilon)}}{2},
~ 0<\nu_4^-<\min\{\nu_4,\nu_{3,\epsilon}^+\},
~ \nu_4^+>\nu_4,
$$
and
$$
{C_2^+}=\mathop{\max}\limits_{[0,T]}{b_1}(t)q(t), ~
{C_2^-}=\mathop{\max}\limits_{[0,T]}{a_1}(t)p(t).
$$
\end{lemma}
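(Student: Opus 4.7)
The plan is to mirror the proof of Lemma \ref{general-estimate-+} by centering on the opposite limiting state $(1,1)$. I introduce $\tilde U(t,z)=1-U(t,z)$ and $\tilde V(t,z)=1-V(t,z)$, turning the limit $(U,V)\to(1,1)$ as $z\to-\infty$ into $(\tilde U,\tilde V)\to(0,0)$. Expanding $g$ and $h$ around $(1,1)$ (using $g_u(t,1,1)=a_1p-b_1q$, $g_v(t,1,1)=0$, $h_u(t,1,1)=a_2p$, $h_v(t,1,1)=-b_2q$), the linearized $\tilde U$-equation decouples from $\tilde V$ at leading order and has ``decay coefficient'' $(b_1q-a_1p)$ with positive average $\kappa_3$, while the $\tilde V$-equation is coupled to $\tilde U$ via the linear source $a_2(t)p(t)\tilde U$. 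Thus the roles of the two equations are swapped compared with the $(0,0)$-centered proof: here $\tilde U$ plays the role that $V$ played before.

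I then set $\psi_1(t)=\exp\bigl(\int_0^t(a_1(s)p(s)-b_1(s)q(s))ds+\kappa_3 t\bigr)$ and $\psi_2(t)=\exp\bigl(-\int_0^t b_2(s)q(s)ds+\kappa_4 t\bigr)$, both $T$-periodic by the definitions of $\kappa_3,\kappa_4$. Define $\hat u(z)=\int_0^T\tilde U(t,z)/\psi_1(t)\,dt$ and $\hat v(z)=\int_0^T\tilde V(t,z)/\psi_2(t)\,dt$. Dividing the transformed equations by the respective weight, integrating over $[0,T]$ in $t$, and using time-periodicity to integrate the $\tilde U_t$ and $\tilde V_t$ terms by parts, the boundary terms drop and the identities $\psi_1'/\psi_1=a_1p-b_1q+\kappa_3$, $\psi_2'/\psi_2=-b_2q+\kappa_4$ convert the systems into
\begin{equation*}
\hat u_{zz}+c\hat u_z-\kappa_3\hat u-\int_0^T\tfrac{a_1p\,\tilde U^2}{\psi_1}dt+\int_0^T\tfrac{b_1q\,\tilde U\tilde V}{\psi_1}dt=0,
\end{equation*}
\begin{equation*}
d\hat v_{zz}+c\hat v_z-\kappa_4\hat v+\int_0^T\tfrac{a_2p\,\tilde U}{\psi_2}dt-\int_0^T\tfrac{a_2p\,\tilde U\tilde V}{\psi_2}dt+\int_0^T\tfrac{b_2q\,\tilde V^2}{\psi_2}dt=0.
\end{equation*}

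Next I handle $\hat u$ first, because its equation is quadratically, not linearly, coupled to $\tilde V$. For any $\epsilon\in(0,\min\{1,\kappa_3/C_2^+\})$ choose $M_\epsilon$ so that $(\tilde U,\tilde V)\leq(\epsilon,\epsilon)$ on $(-\infty,M_\epsilon]$; then the two quadratic integrals in the $\hat u$-equation are bounded above by $C_2^-\epsilon\hat u$ and $C_2^+\epsilon\hat u$ respectively. Exactly as in Lemma \ref{general-estimate-+}, this makes $\hat u$ a subsolution of $u_{zz}+cu_z-(\kappa_3-C_2^+\epsilon)u=0$ and a supersolution of $u_{zz}+cu_z-(\kappa_3+C_2^-\epsilon)u=0$, whose positive characteristic roots are precisely $\nu_{3,\epsilon}^+$ and $\nu_{3,\epsilon}^-$. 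Comparing $\hat u$ with the explicit exponential solutions $Ke^{\nu_{3,\epsilon}^\pm z}$ via the maximum principle on $(-\infty,M_\epsilon]$ (using $\hat u\to0$ at $-\infty$ and choosing the prefactors so the inequality holds at $z=M_\epsilon$), I get the two-sided bound on $\hat u$, and the Harnack inequality \eqref{har-estimate} together with the $T$-periodicity of $\tilde U/\psi_1$ transfers it to the pointwise bound $K_3 e^{\nu_{3,\epsilon}^- z}\leq 1-U(t,z)\leq K_3' e^{\nu_{3,\epsilon}^+ z}$ on $\mathbb R\times(-\infty,0]$.

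Finally I bound $\hat v$. The upper bound on $\tilde U$ just obtained yields $\int_0^T(a_2p\,\tilde U)/\psi_2\,dt\leq M_4 e^{\nu_{3,\epsilon}^+ z}$, and the remaining quadratic integrals are $O(\epsilon\hat v)$ by Harnack. For the upper bound on $\hat v$, fix $\nu_4^-\in(0,\min\{\nu_4,\nu_{3,\epsilon}^+\})$, so that $\tilde L:=-d(\nu_4^-)^2-c\nu_4^-+\kappa_4>0$. Absorbing the small $O(\hat v)$ nonlinearity into $\tilde L/2$ (valid for $z$ sufficiently negative since $\hat v\to 0$), $\hat v$ becomes a subsolution of the inhomogeneous equation $dv_{zz}+cv_z-(\kappa_4-\tilde L/2)v+M_4 e^{\nu_{3,\epsilon}^+ z}=0$; the supersolution $\delta e^{\nu_4^- z}$ with $\delta\geq 2M_4/\tilde L$ dominates the source (since $\nu_4^-\leq\nu_{3,\epsilon}^+$ makes $e^{(\nu_{3,\epsilon}^+-\nu_4^-)z}\leq 1$ for $z\leq 0$), delivering the upper estimate after taking $\delta$ still larger to clear the boundary. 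For the lower bound, fix $\nu_4^+>\nu_4$, making $d(\nu_4^+)^2+c\nu_4^+-\kappa_4>0$; discarding the nonnegative source term and bounding the quadratic nonlinearities by $O(\epsilon\hat v)$, $\hat v$ is a supersolution of a perturbed linear operator for which $\beta e^{\nu_4^+ z}$ is a subsolution, and the maximum principle with an appropriate $\beta$ gives $\hat v\geq\beta e^{\nu_4^+ z}$. Another application of Harnack returns the required pointwise bounds on $1-V$.

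The principal obstacle is the $\hat v$-estimate: its equation is inhomogeneous with a source whose decay rate $\nu_{3,\epsilon}^+$ is controlled only indirectly by the previously established $\tilde U$-estimate, and the sub/supersolution exponents $\nu_4^\pm$ must be selected carefully relative to both $\nu_4$ and $\nu_{3,\epsilon}^+$ so that the source and the nonlinear corrections can simultaneously be absorbed. All other steps are direct analogues of the proof of Lemma \ref{general-estimate-+}, with the roles of the two species swapped.
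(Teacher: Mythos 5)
Your proposal is correct and is essentially the argument the paper intends: the paper gives no separate proof of Lemma \ref{general-estimate--}, stating only that it follows similarly to Lemma \ref{general-estimate-+}, and your mirrored scheme (set $\tilde U=1-U$, $\tilde V=1-V$, observe that the $\tilde U$-equation decouples at linear order and so receives the perturbed exponents $\nu_{3,\epsilon}^{\pm}$ with weights $C_2^{\pm}\epsilon$, then treat the $\tilde V$-equation with its linear source $a_2p\tilde U$ by the inhomogeneous sub/supersolution comparison yielding $\nu_4^{\mp}$) is exactly that role swap. Two small points to tighten: in the lower bound for $\hat v$, bound the cross term $\int_0^T a_2p\tilde U\tilde V/\psi_2\,dt$ by $M e^{\nu_{3,\epsilon}^{+}z}\hat v$ using the already-established estimate on $1-U$ (the analogue of the $M_3e^{\nu_{2,\epsilon}^{+}z}\hat u$ term in Lemma \ref{general-estimate-+}) rather than by $O(\epsilon\hat v)$, so that $\beta e^{\nu_4^{+}z}$ is a subsolution for every $\nu_4^{+}>\nu_4$ once $z$ is sufficiently negative; and the Harnack inequality should be invoked for the cooperative system satisfied by $(1-U,1-V)$ (i.e.\ for $(P,Q)$ of \eqref{PQ}), not literally \eqref{har-estimate}, though the same cited result applies verbatim.
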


\begin{remark}\label{approximation} \rm
The definitions of ${\nu_{i,\epsilon}^\pm}~(i=2,3)$ in Lemmas
\ref{general-estimate-+} and \ref{general-estimate--}
indicate that there exist
${\varepsilon_i^\pm}={\varepsilon_i^\pm}(\epsilon)$ such that for any
$\epsilon>0$ small enough,
there hold $\nu_{i,\epsilon}^\pm=\nu_i\pm{\varepsilon_i^\pm}$ with
${\varepsilon_i^\pm}={\varepsilon_i^\pm}(\epsilon)\to0$ as $\epsilon\to{0^+}.$
Actually, we know from the proof of Lemmas \ref{general-estimate-+} and \ref{general-estimate--}
that the a priori exponential estimates of the periodic traveling wave front
as it approaches the \textbf{stable} limiting state can only be characterized by the perturbations ${\nu_{i,\epsilon}^\pm}~(i=2,3)$ and $\nu_i^\pm~(i=1,4)$ rather than $\nu_i~(i=1,2,3,4),$ which is essentially different from the case that it approaches its \textbf{unstable} limiting states,
since there is no such exponential type sub-super solutions as
in \cite[Lemma 3.3]{zhao2011} that
equipped with $\nu_i~(i=1,2,3,4)$ as the decaying exponent for the a priori exponential estimates.
\end{remark}

\begin{lemma}\label{general-estimate-derives}
Assume (A1)-(A3). Let $(U(t,z),V(t,z))$ be a solution of \eqref{UV}.
Then there exist $C_1,~C_2>0$ such that
\begin{align}
\left|{W(t,z)}\right|+\left|{{W_z}(t,z)}\right|+\left|{{W_{zz}}(t,z)}\right|
\leq C_1{e^{\nu_4^-z}} \text{ for any } (t,z)\in R\times (-\infty,0],
\label{general-estimate-derives--}
\\
\left|{W(t,z)-1}\right|+\left|{{W_z}(t,z)}\right|+\left|{{W_{zz}}(t,z)}\right|
\leq{C_2}{e^{\nu_1^+z}} \text{ for any } (t,z)\in R\times [0,+\infty),
\label{general-estimate-derives-+}
\end{align}
where $W(t,z)=U(t,z)$ or $V(t,z),$
$\nu_1^+$ and $\nu_4^-$ are defined in Lemmas \ref{general-estimate-+} and \ref{general-estimate--}.
\end{lemma}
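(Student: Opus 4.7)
My plan is to bootstrap from the $L^\infty$ exponential tail bounds already established in Lemmas \ref{general-estimate-+} and \ref{general-estimate--} to matching bounds on $W_z$ and $W_{zz}$ via interior parabolic Schauder estimates, using the $T$-periodicity in $t$ to compactify the temporal direction. As a preliminary harmonization of exponents, on $[0,+\infty)$ the inequality $\nu_{2,\epsilon}^+<\nu_1^+<0$ gives $e^{\nu_{2,\epsilon}^+z}\le e^{\nu_1^+z}$ for $z\ge 0$, so Lemma \ref{general-estimate-+} yields the common rate $e^{\nu_1^+z}$ controlling the distance from the limiting state $(0,0)$; similarly, $\nu_4^-<\nu_{3,\epsilon}^+$ together with $z\le 0$ gives $e^{\nu_{3,\epsilon}^+z}\le e^{\nu_4^-z}$, so Lemma \ref{general-estimate--} yields the common rate $e^{\nu_4^-z}$ controlling the distance from $(1,1)$. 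Combined with the elementary bounds $|g(t,U,V)|+|h(t,U,V)|\le L(|U-U_\infty|+|V-V_\infty|)$, which hold because the nonlinearities in \eqref{UV} vanish at both limiting states and are smooth, the same exponential decay transfers to the source terms of \eqref{UV} on each half-line.

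For the derivatives, I would fix any $z_0$ in the relevant half-line and work on the translated cylinder $\Omega_{z_0}=(\mathbb R/T\mathbb Z)\times[z_0-1,z_0+1]$, where $T$-periodicity makes the temporal direction compact. Each component of the traveling wave solves a linear parabolic equation of the form $W_t-\sigma W_{zz}-cW_z=F(t,z)$ with $\sigma\in\{1,d\}$, to which interior parabolic Schauder estimates (e.g.\ Lieberman) apply with a constant $C_S$ and exponent $\alpha\in(0,1)$ independent of $z_0$. To get the correct exponential scaling for the derivatives I would rescale first, setting $\widetilde W(t,\zeta)=e^{-\nu z_0}(W(t,z_0+\zeta)-W_\infty)$ with $\nu\in\{\nu_1^+,\nu_4^-\}$ chosen to match the half-line. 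By the first paragraph, $\|\widetilde W\|_{L^\infty}$ and the rescaled source $\widetilde F(t,\zeta)=e^{-\nu z_0}F(t,z_0+\zeta)$ are both $O(1)$ on the unit cylinder. A standard $L^p\to C^{\alpha/2,\alpha}$ parabolic bootstrap followed by Schauder then yields
\begin{equation*}
\|\widetilde W\|_{C^{1+\alpha/2,\,2+\alpha}((\mathbb R/T\mathbb Z)\times[-1/2,1/2])}\le C
\end{equation*}
with $C$ independent of $z_0$. Unscaling at $\zeta=0$ gives $|W_z(t,z_0)|+|W_{zz}(t,z_0)|\le C'e^{\nu z_0}$ uniformly in $t$, and combining with the $L^\infty$ tail bound delivers the three-term estimates \eqref{general-estimate-derives--} and \eqref{general-estimate-derives-+} after letting $z_0$ range over the respective half-line.

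The main subtlety is precisely the matching between the $L^\infty$ decay and the Hölder decay of the source: a direct application of Schauder to the unscaled $W$ would leave $\|F\|_{C^{\alpha/2,\alpha}}$ of order $1$ instead of $O(e^{\nu z_0})$, which would spoil the exponential rate on the derivatives. The rescaling/bootstrap described above is the only genuinely new ingredient beyond the previous two lemmas; all other steps — the Hölder coefficients from assumption (A1), compactness via $T$-periodicity, and the uniform parabolic Harnack inequality already used for \eqref{har-estimate} — are routine.
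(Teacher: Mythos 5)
Your argument is essentially the one the paper intends: its omitted proof just invokes interior parabolic estimates (following Zhao--Ruan) on top of the tail bounds of Lemmas \ref{general-estimate-+} and \ref{general-estimate--}, which is precisely your rescaled Schauder/bootstrap on unit cylinders, and your exponent comparisons $\nu_{2,\epsilon}^+<\nu_1^+<0$ and $0<\nu_4^-<\nu_{3,\epsilon}^+$ are correct. Note only that you have (rightly) proved the bound with $\left|W-1\right|$ on $(-\infty,0]$ and $\left|W\right|$ on $[0,+\infty)$; the displayed lemma swaps these two terms, evidently a typo, since the wave tends to $(1,1)$ as $z\to-\infty$ and the later applications (e.g.\ in Proposition \ref{1<2}) use exactly your version.
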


\begin{proof}
The proof is similar to that of \cite[Proposition $3.4$]{zhao2011},
using the interior parabolic estimates and the Harnack inequality \eqref{har-estimate},
so we omit it.
\end{proof}

Next we establish the exact exponential decay rate of
periodic traveling wave fronts of \eqref{uv-1-0} near
the limiting state $(0,0).$
Denote $(U(t,z),V(t,z))$ by $(u(t,z),v(t,z))$
for the convenience of writing in the current section.

\begin{proposition}\label{1<2}
Let $(u(t,z),v(t,z))$ be a periodic traveling wave front of \eqref{uv-1-0}.
Then
\begin{equation*}
\mathop{\lim}\limits_{z\to+\infty}\frac{v(t,z)}{{k_1}{e^{\nu_2z}}\phi_2(t)}=1 \text{ and }
~\mathop{\lim}\limits_{z\to+\infty}\frac{v_z(t,z)}{{k_1}{e^{\nu_2z}}\phi_2(t)}=\nu_2
\text{ uniformly in } t\in \mathbb R.
\end{equation*}
If in addition $\nu_1<\nu_2,$ then
\begin{equation*}
\mathop{\lim}\limits_{z\to+\infty}\frac{u(t,z)}{{k_1}{e^{\nu_2z}}\tilde{\phi}_1(t)}=1
\text{ and }
~\mathop{\lim}\limits_{z\to+\infty}\frac{{u_z}{(t,z)}}{{k_1}{e^{\nu_2z}}\tilde{\phi}_1(t)}=\nu_2
\text{ uniformly in } t\in \mathbb R,
\end{equation*}
where $k_1>0$ is some constant and
\begin{equation}\label{phi-1-t}
\begin{cases}
\tilde{\phi}_1(t)=\tilde{\phi}_1(0){e^{\int_0^t{(\upsilon_1-a_1(s)p(s))ds}}}
+\int_0^t{{e^{\int_s^t{(\upsilon_1-a_1(\tau)p(\tau))d\tau}}}b_1(s)q(s)\phi_2(s)ds},\\
\tilde{\phi}_1(0)={\left({1-{e^{\int_0^T{(\upsilon_1-a_1(s)p(s))ds}}}}\right)^{-1}}
\int_0^T{{e^{\int_s^T{(\upsilon_1-a_1(\tau)p(\tau))d\tau}}}b_1(s)q(s)\phi_2(s)ds}
\end{cases}
\end{equation}
with $\upsilon_1=\nu_2^2+c\nu_2.$
\end{proposition}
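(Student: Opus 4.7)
The plan is to reduce to the scalar averaged quantity $\hat{v}(z) := \int_0^T v(t,z)/\phi_2(t)\,dt$, which by the computation leading to \eqref{uv-hat} satisfies the inhomogeneous linear ODE
\begin{equation*}
d\hat{v}_{zz} + c\hat{v}_z - \kappa_2\hat{v} = -N(z),\qquad N(z) := \int_0^T \frac{a_2(t)p(t)\, uv - b_2(t)q(t)\, v^2}{\phi_2(t)}\,dt,
\end{equation*}
with characteristic polynomial $\Lambda(\lambda) := d\lambda^2 + c\lambda - \kappa_2$ whose negative root is precisely $\nu_2$; by the a priori bounds of Lemma \ref{general-estimate-+}, $|N(z)| \le C e^{\mu z}$ for some $\mu < \nu_2$ once $\epsilon$ is taken small. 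Applying the two-sided Laplace transform $L(\lambda) := \int_M^{+\infty} e^{-\lambda z}\hat v(z)\,dz$, which converges for $\mathrm{Re}(\lambda) > \nu_{2,\epsilon}^+$, two integrations by parts yield the algebraic identity $\Lambda(\lambda) L(\lambda) = \Phi(\lambda)$ in which $\Phi$ (the transform of $-N$ plus boundary data at $z = M$) is holomorphic on the strictly larger half-plane $\mathrm{Re}(\lambda) > \mu$. Since $\nu_2$ is a simple zero of $\Lambda$, a contour-shift / Ikehara-type argument, combined with the positivity of $\hat v$, produces $\hat v(z) = K e^{\nu_2 z} + O(e^{(\nu_2 - \delta)z})$ as $z \to +\infty$ for some constant $K > 0$.

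To upgrade this averaged asymptotic to the pointwise-in-$t$ claim I would set $\tilde v(t,z) := v(t,z) e^{-\nu_2 z}/\phi_2(t)$. By Lemma \ref{general-estimate-+} and the Harnack inequality \eqref{har-estimate}, $\tilde v$ is bounded above and below by positive constants, and the twist $\phi_2$ together with the factor $e^{-\nu_2 z}$ is designed so that $\tilde v$ satisfies a parabolic equation whose coefficients converge as $z \to +\infty$ to those of a linear limit problem, the nonlinear remainder vanishing like $e^{\mu z}$. Parabolic Schauder estimates then make $\{\tilde v(\cdot,\cdot+z_n)\}$ precompact in $C^{1,2}_{\mathrm{loc}}$ for any $z_n \to +\infty$; a subsequential limit is a bounded positive $T$-periodic entire solution of the linearised limit equation, which by a rigidity argument must be constant in $(t,z)$. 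Integration in $t$ identifies this constant as $K/T$, so $\tilde v \to k_1 := K/T$ uniformly in $t$, and the simultaneous convergence $\partial_z \tilde v \to 0$ unpacks to $v_z \sim \nu_2 k_1 \phi_2(t) e^{\nu_2 z}$. For the $u$-component under $\nu_1 < \nu_2$, the coupling $b_1 q v \sim k_1 b_1(t)q(t)\phi_2(t) e^{\nu_2 z}$ decays more slowly than the intrinsic $e^{\nu_1 z}$ rate, so the natural ansatz is $u \sim k_1 \tilde\phi_1(t) e^{\nu_2 z}$; substitution into the linearised $u$-equation produces the periodic ODE $\tilde\phi_1' = (\upsilon_1 - a_1 p)\tilde\phi_1 + b_1 q \phi_2$ with $\upsilon_1 = \nu_2^2 + c\nu_2$. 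The $T$-periodicity condition is solvable because $\upsilon_1 - \kappa_1 = (\nu_2-\nu_1)(\nu_2-\nu_1') < 0$ (with $\nu_1' > 0$ the other root of $\lambda^2+c\lambda-\kappa_1 = 0$ and $\nu_1 < \nu_2 < 0 < \nu_1'$), making the monodromy $e^{\int_0^T(\upsilon_1 - a_1p)\,ds}$ lie in $(0,1)$; this is precisely \eqref{phi-1-t}, and the positivity of $b_1 q \phi_2$ makes $\tilde\phi_1 > 0$. An analogous averaged Laplace argument for $\int_0^T u(t,z)/\tilde\phi_1(t)\,dt$, together with the same compactness upgrade for $\tilde u(t,z) := u(t,z)e^{-\nu_2 z}/\tilde\phi_1(t)$, pins the leading constant to the same $k_1$.

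The principal obstacle I anticipate is the upgrade from averaged to pointwise asymptotics: in autonomous problems Ikehara's theorem applies directly to $v(z)$, whereas the time-dependence here forces the indirect route through averages plus parabolic compactness, and one must verify that bounded $T$-periodic entire solutions of the linearised limit problem are constant in $(t,z)$. A secondary subtlety is that Lemma \ref{general-estimate-+} provides the a priori bounds with perturbed exponents $\nu_{2,\epsilon}^\pm$ rather than $\nu_2$ itself: the region of analyticity of $\Phi$ in the Laplace identity must be tracked carefully with $\epsilon > 0$ small, and $\epsilon \to 0$ is taken only at the last step in order to legitimise locating the pole of $L$ exactly at $\nu_2$ and extracting its residue.
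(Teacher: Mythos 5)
Your route is genuinely different from the paper's. For the $v$-component the paper simply quotes the two-sided Laplace transform result of \cite{du2016} (the operator-theoretic argument reproduced here in the proof of Theorem \ref{1>2}), which directly gives $v-k_1e^{\nu_2 z}\phi_2(t)=O(e^{(\nu_2-\varepsilon)z})$ uniformly in $t$; you instead work with the scalar average $\hat v$, extract $\hat v(z)=Ke^{\nu_2 z}+O(e^{(\nu_2-\delta)z})$ from the constant-coefficient ODE, and upgrade to pointwise-in-$t$ asymptotics via \eqref{har-estimate}, parabolic compactness and a Liouville property of the twisted limit equation $\tilde v_t=d\tilde v_{zz}+(2d\nu_2+c)\tilde v_z$. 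That half is workable (for the scalar ODE even variation of constants suffices, and $K>0$ follows from the lower bound of Lemma \ref{general-estimate-+} once $\epsilon$ is small), with one bookkeeping slip: the two-sided bound $\tilde v\asymp1$ does \emph{not} follow from Lemma \ref{general-estimate-+} plus \eqref{har-estimate} alone, since those only give the perturbed exponents $\nu_{2,\epsilon}^{\pm}$; it follows from your averaged asymptotics for $\hat v$ combined with \eqref{har-estimate}.

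The genuine gap is in the $u$-component when $\nu_1<\nu_2$. First, the proposed average $\int_0^T u(t,z)\tilde\phi_1(t)^{-1}dt$ does not satisfy a closed constant-coefficient ODE: because $\tilde\phi_1$ solves an \emph{inhomogeneous} periodic ODE (unlike $\phi_1$), the averaging leaves the terms $\int_0^T b_1q\phi_2\,\tilde\phi_1^{-1}(u\tilde\phi_1^{-1}-v\phi_2^{-1})\,dt$, which are of the same order $e^{\nu_2 z}$ as the quantity being expanded and, worse, resonate with $\nu_2$, which is a root of the characteristic polynomial $\lambda^2+c\lambda-\upsilon_1$; controlling them needs precisely the pointwise statement $u\sim k_1e^{\nu_2 z}\tilde\phi_1(t)$ that you are trying to prove, so the Laplace/contour step is circular. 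Second, the compactness upgrade for $\tilde u=u e^{-\nu_2 z}/\tilde\phi_1$ requires the a priori bound $u=O(e^{\nu_2 z})$ uniformly in $t$, but when $\nu_1<\nu_2$ Lemma \ref{general-estimate-+} only yields $u\le K_1'e^{\nu_1^+z}$ with $\nu_1^+>\nu_{2,\epsilon}^+>\nu_2$, so $\tilde u$ could a priori be unbounded, and the lower bound $K_1e^{\nu_1^-z}$ is far below $e^{\nu_2 z}$. The paper avoids both problems with a direct barrier argument: it sets $\xi=(u-k_1e^{\nu_2 z}\tilde\phi_1)/\phi_1$, checks that the inhomogeneity is $O(e^{(\nu_2-\varepsilon)z})$ using the already-proved $v$-asymptotics together with \eqref{general-estimate-derives-+} (choosing $2\nu_1^+<\nu_2-\varepsilon$), and sandwiches $\xi$ between $\pm K_Qe^{(\nu_2-\varepsilon)z}$ by a maximum-principle comparison, with no two-sided bound on $u$ of order $e^{\nu_2 z}$ ever needed. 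Your scheme could be repaired by averaging against $1/\phi_1$ instead (the $\hat u$-equation in \eqref{uv-hat} is non-resonant, its forcing is $\sim Ce^{\nu_2 z}$, giving $\hat u\asymp e^{\nu_2 z}$ and then, via \eqref{har-estimate}, the missing pointwise bounds), but as written the $u$-step does not go through.
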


\begin{proof}
Consider the linearized system of \eqref{uv-1-0} at $(0,0)$
\begin{equation*}
\begin{cases}
u_t=u_{zz}+cu_z-a_1pu+b_1qv,\\
v_t=dv_{zz}+cv_z+(b_2q-a_2p)v.
\end{cases}
\end{equation*}
Since the $v-$equation is not coupled with $u,$
the exponential behavior of $v$ can be obtained by
employing the two-sided Laplace transform method.
Indeed, according to the proof of \cite[Theorem 2.7]{du2016}, there exist constants
$K^\prime,~K^{\prime\prime}>0$
and $\varepsilon_0>0$ such that for any $0<\varepsilon<\varepsilon_0,$
$$
\eta(t,z):=v(t,z)-{k_1}{e^{\nu_2z}}\phi_2(t)\text{ and } \tilde{\eta}(t,z):=v_z(t,z)-k_1\nu_2{e^{\nu_2z}}\phi_2(t)
$$
satisfy
\begin{equation}\label{v-higer}
\mathop{\sup}\limits_{t\in[0,T]}\left|{\eta (t,z)}\right|\leq{K^\prime}{e^{(\nu_2-{\varepsilon})z}}\text{ and } \mathop{\sup}\limits_{t\in[0,T]}\left|{\tilde{\eta} (t,z)}\right|\leq{K^{\prime\prime}}{e^{(\nu_2-{\varepsilon})z}}
\text{ for any } z\geq 0,
\end{equation}
respectively. Therefore, we have
\begin{equation*}
\mathop{\lim}\limits_{z\to+\infty}\frac{v(t,z)}{{k_1}{e^{\nu_2z}}\phi_2(t)}=1 \text{ and } ~\mathop{\lim}\limits_{z\to+\infty}\frac{v_z(t,z)}{{k_1}{e^{\nu_2z}}\phi_2(t)}=\nu_2
\text{ uniformly in } t\in \mathbb R.
\end{equation*}

Now we study the exponential behavior of $u$ as $z\to+\infty.$
It follows from $\nu_1<\nu_2<0$ that $\nu_2^2+c\nu_2-\kappa_1<0,$
then the equation
$$b_1(t)q(t)\phi_2(t)+[{\nu_2^2}+c\nu_2-a_1(t)p(t)]w-{w_t}=0$$
admits a unique positive periodic solution $\tilde{\phi}_1(t)$ given by \eqref{phi-1-t}.
A direct calculation shows that
$\omega(t,z):={k_1}{e^{\nu_2z}}\tilde{\phi}_1(t)$ satisfies
$$b_1(t)q(t){k_1}{e^{\nu_2z}}\phi_2(t)
-a_1(t)p(t)\omega+{\omega_{zz}}+c{\omega_z}-{\omega_t}=0.$$
Let
$$
\xi(t,z)=\frac{u(t,z)-{k_1}{e^{\nu_2z}}\tilde{\phi}_1(t)}{\phi_1(t)} \text{ and }
\zeta(t,z)=\frac{v(t,z)-{k_1}{e^{\nu_2z}}\phi_2(t)}{\phi_1(t)}
$$
for any $(t,z)\in\mathbb R\times[0,+\infty),$ then
$$
R(t,z)-\kappa_1 \xi+{\xi_{zz}}+c{\xi_z}-{\xi_t}=0 \text{ for any }
(t,z)\in\mathbb R\times[0,+\infty),
$$
where $$R(t,z)=\left[a_1(t)p(t)u^2(t,z)-b_1(t)q(t)u(t,z)v(t,z)\right]
{\phi_1^{-1}(t)}+b_1(t)q(t)\zeta(t,z).$$
Note that
$\mathop{\sup}\limits_{t\in\mathbb R}\left|{\zeta(t,z)}\right|
=O({e^{(\nu_2-\varepsilon)z}})$ as $z\to+\infty.$
In addition, it follows from \eqref{general-estimate-derives-+} that
$$
\mathop{\sup}\limits_{t\in R}\left[a_1(t)p(t)u^2(t,z)-b_1(t)q(t)u(t,z)v(t,z)\right]
=O\left({e^{2\nu_1^+z}}\right)\text { as } z\to+\infty.
$$
Since we can take $0>\nu_1^+>\max\{\nu_1,\nu_{2,\epsilon}^+\}$ small enough such that
$2\nu_1^+<\nu_2-\varepsilon,$
there exist positive constants $M$ and $K_M$ such that
$$
\left|\left[a_1(t)p(t)u^2(t,z)-b_1(t)q(t)u(t,z)v(t,z)\right]{\phi_1^{-1}(t)}\right|+
\left|{b_1(t)q(t)\zeta(t,z)}\right|\leq{K_M}{e^{(\nu_2-{\varepsilon})z}}
$$
for any $(t,z)\in \mathbb R\times[M,+\infty).$

Next we show that
$\mathop{\sup}\limits_{t\in\mathbb R}\left|{\xi(t,z)}\right|=o({e^{{\nu_2}z}})$
as $z\to+\infty.$
In view of \eqref{general-estimate-derives-+}, there holds
$\mathop{\sup}\limits_{t\in\mathbb R}\left|{\xi(t,z)}\right|=O\left({e^{\nu_1^+z}}\right)
\text{ as } z\to+\infty.$
Noting that $\nu_1<\nu_2,$
let $0<\varepsilon<\varepsilon_0$ be small enough such that
$\nu_2-{\varepsilon}>\nu_1$ and hence
$Q:={(\nu_2-{\varepsilon})^2}+c(\nu_2-{\varepsilon})-\kappa_1<0.$
It is easy to verify that
$\pm K{e^{(\nu_2-{\varepsilon})z}}$
satisfy respectively
$$
R(t,z)-\kappa_1\omega+{\omega_{zz}}+c{\omega_z}-{\omega_t}\leq(\geq)~0
\text{ for any } (t,z)\in\mathbb R\times[M,+\infty)
$$
whenever $K\geq\frac{K_M}{\left|Q\right|}.$
Since $\left|{\xi(t,z)}\right|$ is bounded in $(t,z)\in\mathbb R\times{\mathbb R^+},$
there exists
${K_Q}\geq\frac{K_M}{\left|Q\right|}$ such that
$\left|{\xi(t,M)}\right|\leq{K_Q}{e^{(\nu_2-{\varepsilon})M}}$
for any $t\in \mathbb R,$
then
\begin{equation}\label{upper-sub}
-{K_Q}{e^{(\nu_2-{\varepsilon})z}}\leq\xi(t,z)
\leq{K_Q}{e^{(\nu_2-{\varepsilon})z}}
\text{ for any } (t,z)\in \mathbb R\times[M,+\infty).
\end{equation}
Indeed, set
$$
{\omega^\pm}(t,z)=\pm{K_Q}{e^{(\nu_2-{\varepsilon})z}}-\xi(t,z)
\text{ for any } (t,z)\in \mathbb R\times[M,+\infty),
$$
then we obtain
\begin{equation}\label{upper-sub-ineq}
\omega_{zz}^++c\omega_z^+-\omega_t^+-\kappa_1{\omega^+}\leq 0,\\
\quad\omega_{zz}^-+c\omega_z^--\omega_t^--\kappa_1{\omega^-}\geq 0.
\end{equation}
Notice that ${\omega^\pm}(t,z)$ is $T-$ periodic in $t,$
it is sufficient to show that
${\omega^+}(t,z)\geq0$ for any $(t,z)\in(0,2T)\times[M,+\infty),$
while a similar argument holds for ${\omega^-}(t,z)\leq 0.$
Assume on the contrary that
$$
\mathop{\inf}\limits_{(t,z)\in(0,2T)\times [M,+\infty)}{\omega^+}(t,z)<0,
$$
it follows from
$\mathop{\lim}\limits_{z\to+\infty}\mathop{\sup}\limits_{t\in[0,2T]}{\omega^+}(t,z)=0$
that there exists $({t^*},{z^*})\in(0,2T)\times(M,+\infty )$ such that
$${\omega^+}({t^*},{z^*})=\mathop {\inf}\limits_{(t,z)\in(0,2T)\times [M,+\infty)}{\omega^+}(t,z)<0,
$$
and then
$\left[{\omega_{zz}^++c\omega_z^+-\omega_t^+-\kappa_1{\omega^+}}\right]\big|_{({t^*},{z^*})}>0,$
which contradicts to \eqref{upper-sub-ineq}. Hence \eqref{upper-sub} implies that
$$\mathop{\sup}\limits_{t\in\mathbb R}\left|{\xi(t,z)}\right|=o({e^{{\nu_2}z}})
\text{ as } z\to+\infty.
$$
Therefore, we see from the definition of $\xi(t,z)$ that
$$\mathop{\lim}\limits_{z\to +\infty}\frac{u(t,z)}{{k_1}{e^{\nu_2z}}\tilde{\phi}_1(t)}=1
\text{ uniformly in } t\in \mathbb R.$$

The argument for $u_z$ is similar and we only give a brief sketch here. Let
\begin{equation*}
\tilde\xi(t,z)=\frac{u_z(t,z)-{k_1}{\nu_2}{e^{\nu_2z}}\tilde{\phi}_1(t)}{\phi_1(t)} \text{ and }
~\tilde\zeta(t,z)=\frac{v_z(t,z)-{k_1}{\nu_2}{e^{\nu_2z}}\phi_2(t)}{\phi_1(t)}
\end{equation*}
for any $(t,z)\in\mathbb R\times[0,+\infty)$, then
$$\tilde R(t,z)-\kappa_1\tilde\xi+{\tilde\xi_{zz}}+c{\tilde\xi_z}-{\tilde\xi_t}=0
\text{ for any } (t,z)\in\mathbb R\times[0,+\infty),
$$
where
$\tilde R(t,z)=[(2a_1pu-b_1qv)u_z-(b_1qu)v_z]{\phi_1^{-1}}+b_1q\tilde\zeta.$
The same argument as above shows that
$$
\mathop{\sup}\limits_{t\in\mathbb R}\left|{\tilde\xi(t,z)}\right|=o({e^{\nu_2z}})
\text{ as } z\to+\infty,
$$
and hence
$$
\mathop{\lim}\limits_{z\to+\infty}\frac{{u_z}{(t,z)}}{{k_1}{e^{\nu_2z}}\tilde\phi_1(t)}=\nu_2
\text{ uniformly in } t\in\mathbb R.
$$
We now complete all the proof.
\end{proof}

\begin{remark} \rm
Note that $\nu_1<\nu_2$ has nothing to do with the exponential behavior of $v$ as $z\to+\infty$, but it is very important for that of $u$. In fact, in our recent paper \cite{du2016} concerning system \eqref{uv-eq} with \textbf{monostable} structure, we have proposed a sufficient condition:
\begin{description}
\item[(C3)] $b_1(t)q(t)\leq a_1(t)p(t)<5~b_1(t)q(t)$ for any $t\in\mathbb R,$
\end{description}
to ensure that $\nu_1<\nu_2$, see \cite[Theorem 2.7]{du2016}.
\end{remark}

In the following, we discuss the other two cases, that is,
the exponential behaviors of $u$ as $z\to+\infty$ for
$\nu_1>\nu_2$ and for $\nu_1=\nu_2.$
When $\nu_1>\nu_2$, we regard $z$ as the evolution variable and then
employ the Laplace transform method and spectrum analysis to address this issue.

Let $Y=L_T^2\times L_T^2$ with
$$
L_T^2:=\left\{\int_0^T{{{\left|{h(t+s)}\right|}^2}ds<\infty},~h(t+T)=h(t)\right\}
\text{ and }
{\left\|h\right\|_{L_T^2}}={\left(\int_0^T{{{\left|{h(s)}\right|}^2}ds}\right)^{\frac{1}{2}}},
$$
and
$$
H_T^1=\left\{h\in L_T^2,~\mathop{\sup}\limits_{t\in\mathbb R}
\int_0^T|h^\prime(t+s)|^2ds<\infty\right\}.
$$
Define an operator $\mathcal A:D(\mathcal A)\subset Y\rightarrow Y$ as
\begin{equation}\label{operatorA}
\mathcal A =\left({\begin{array}{*{20}{c}}0&I \\{{\partial_t}-{g_u}(t,0,0)}&{-c}\end{array}}\right).
\end{equation}
It is easy to verify that $\mathcal A$ is closed and densely defined
in $D(\mathcal A)=H_T^1\times L_T^2$, $\nu_1\in\sigma(\mathcal A)=\sigma_p(\mathcal A)$ and
\begin{equation*}
{\rm ker}(\nu_1I-\mathcal A)^n={\rm ker}(\nu_1I-\mathcal A)
={\rm span}\left\{{\left({\begin{array}{*{20}{c}}{\phi_1}
\\{\nu_1\phi_1}\end{array}}\right)}\right\}
\text{ for } n=2,3,\cdots,
\end{equation*}
which implies that $\nu_1$ is a simple pole of
${(\lambda I-\mathcal A)^{-1}}$~(see \cite[Remark A.2.4]{lunardi1995}).
Moreover, a similar argument to \cite[Proposition 3.6]{zhao2011} shows that
there exists some $\varepsilon^\prime>0$ such that
${\Theta_{{\varepsilon^\prime}}}\cap\sigma(\mathcal A)=\{\nu_1\},$
where
${\Theta_{{\varepsilon^\prime}}}=\{\lambda\in\mathbb C|\nu_1-{\varepsilon^\prime}
\leq\operatorname{Re}\lambda\leq\nu_1+{\varepsilon^\prime}\}$
is the vertical strip containing the vertical line $Re\lambda=\nu_1.$
Thus,
$\nu_1$ is the only singular point of $\lambda I-\mathcal A$
in $\Theta_{{\varepsilon^\prime}}.$
Then by \cite{lunardi1995}, the Laurent series of $(\lambda I-\mathcal A)^{-1}$ near $\lambda=\nu_1$ is given as
\begin{equation}\label{laurent}
(\lambda I-\mathcal A)^{-1}
=\sum\limits_{n=0}^\infty{{(-1)^n}{{(\lambda-\nu_1)}^n}{S^{n+1}}}
+\frac{P}{\lambda-\nu_1}
+\sum\limits_{n=1}^\infty{{{(\lambda-\nu_1)}^{-n-1}}{D^n}},
\end{equation}
where
$$
P=\frac{1}{2\pi i}\int_\Gamma{{(\lambda I-\mathcal A)}^{-1}}d\lambda
$$
is the spectral projection with
$\Gamma:\left|{\lambda-\nu_1}\right|<\varepsilon^{\prime\prime}$ for some
$\varepsilon^{\prime\prime}>0$ small enough, and
$$
S=\frac{1}{2\pi i}\int_\Gamma\frac{{(\lambda I-\mathcal A)}^{-1}}{\lambda-\nu_1}d\lambda
=\mathop{\lim}\limits_{\lambda\to\nu_1}(I-P){(\lambda I-\mathcal A)}^{-1},
~~D=(\mathcal A-\nu_1I)P.
$$
Since $\nu_1$ is a simple pole of ${(\lambda I-\mathcal A)^{-1}},$
it follows from \cite[Proposition $A.2.2$]{lunardi1995} that
${R(P)}={\rm ker}(\nu_1I-\mathcal A)$ and
hence ${{D^n}=0}$ for any $n\in N^+.$
Then \eqref{laurent} becomes
\begin{equation*}
(\lambda I-\mathcal A)^{-1}
=\sum\limits_{n=0}^\infty{{(-1)^n}{{(\lambda-\nu_1)}^n}{S^{n+1}}}
+\frac{P}{\lambda-\nu_1},
\end{equation*}
with projection $P$ the residue of
$(\lambda I-\mathcal A)^{-1}$ at $\lambda=\nu_1.$

Let
$\lambda=\mu+i\eta\in\rho(\mathcal A)$ with $\mu,~\eta\in\mathbb R,$
denote
$$
S=\left\{{\left(\left.{\begin{array}{*{20}{c}}0\\j \end{array}}\right)\right|j\in L_T^2} \right\}\subset Y
$$
and $(\lambda I-\mathcal A)_S^{-1}$ the
restriction of $(\lambda I-\mathcal A)^{-1}$ to $S,$
then it is not difficult to estimate that
there exist positive constants $C$ and $\varrho$ such that
\begin{equation}\label{resolvent-es}
\left\|{(\lambda I-\mathcal A)_S^{-1}}\right\|\leq\frac{C}{{\left|\eta\right|}}
\text{ for any }
\mu\in[\nu_1-{\varepsilon^\prime},\nu_1+{\varepsilon^\prime}],
~\left|\eta\right|\geq\varrho.
\end{equation}

We now state the exponential behavior of $u$ as $z\to+\infty$
when $\nu_1>\nu_2.$
\begin{theorem}\label{1>2}
Assume (A1)-(A3). Let $(u(t,z),v(t,z))$ be a solution of
\eqref{UV}. If $\nu_1>\nu_2,$ then
\begin{equation*}
\mathop{\lim}\limits_{z\to+\infty}\frac{u(t,z)}{{k_2}{e^{\nu_1z}}\phi_1(t)}=1,
~\mathop{\lim}\limits_{z\to+\infty}\frac{u_z(t,z)}{{k_2}{e^{\nu_1z}}\phi_1(t)}=\nu_1
 \text{ uniformly in } t\in \mathbb R,
\end{equation*}
where $k_2>0$ is some constant.
\end{theorem}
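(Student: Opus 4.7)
The plan is to regard $z$ as the evolution variable for the pair $U(z)=(u(\cdot,z),u_z(\cdot,z))^{\top}\in Y$ and to exploit the spectral picture established immediately before the statement, namely that $\nu_1$ is the only, and simple, singular point of $(\lambda I-\mathcal{A})^{-1}$ inside the strip $\Theta_{\varepsilon^\prime}$. The $u$-equation of \eqref{uv-1-0} rewrites as
\begin{equation*}
U_z=\mathcal{A}U+F(z),\qquad F(z)=\begin{pmatrix}0\\ -b_1(\cdot)q(\cdot)\,v(\cdot,z)-G(\cdot,u(\cdot,z),v(\cdot,z))\end{pmatrix},
\end{equation*}
where $G(t,u,v):=u\bigl(a_1(t)p(t)u-b_1(t)q(t)v\bigr)$ collects the quadratic remainder of $g$ at the origin. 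Combining Proposition \ref{1<2} with Lemmas \ref{general-estimate-+} and \ref{general-estimate-derives}, and choosing $\nu_1^+$ close enough to $\nu_1$ so that $2\nu_1^+<\nu_1$, one gets
\begin{equation*}
\|F(z)\|_{Y}\leq C\,e^{\bar\nu z},\quad z\geq M,
\end{equation*}
for some $\bar\nu<\nu_1$ which can be chosen arbitrarily close to $\max\{\nu_{2},2\nu_1^+\}$.

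The second step is to apply a one-sided Laplace transform $\widehat U(\lambda)=\int_M^{+\infty}e^{-\lambda z}U(z)\,dz$, which is $Y$-valued and holomorphic on $\{\mathrm{Re}\,\lambda>\nu_1^+\}$. The ODE for $U$ yields the algebraic identity
\begin{equation*}
(\lambda I-\mathcal{A})\widehat U(\lambda)=e^{-\lambda M}U(M)+\widehat F(\lambda),
\end{equation*}
and since the estimate above makes $\widehat F(\lambda)$ holomorphic on the larger half-plane $\{\mathrm{Re}\,\lambda>\bar\nu\}$, the product $(\lambda I-\mathcal{A})^{-1}\bigl[e^{-\lambda M}U(M)+\widehat F(\lambda)\bigr]$ admits a meromorphic continuation across $\mathrm{Re}\,\lambda=\nu_1^+$ into a neighbourhood of $\Theta_{\varepsilon^\prime}$ whose only singularity is the simple pole at $\lambda=\nu_1$. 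The Bromwich inversion formula then gives, for any $\alpha>\nu_1^+$ and $z\geq M$,
\begin{equation*}
U(z)=\frac{1}{2\pi i}\int_{\mathrm{Re}\,\lambda=\alpha}e^{\lambda z}(\lambda I-\mathcal{A})^{-1}\bigl[e^{-\lambda M}U(M)+\widehat F(\lambda)\bigr]d\lambda.
\end{equation*}

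The crux of the proof is then to shift this contour leftward from $\mathrm{Re}\,\lambda=\alpha$ to $\mathrm{Re}\,\lambda=\alpha^\prime$ with $\alpha^\prime\in(\bar\nu,\nu_1)$: the horizontal pieces at $|\mathrm{Im}\,\lambda|\to+\infty$ must be shown to vanish thanks to the resolvent bound \eqref{resolvent-es}, supplemented with an integration by parts in $\lambda$ against the oscillatory factor $e^{\lambda z}$ to gain the additional $|\mathrm{Im}\,\lambda|^{-1}$ decay that \eqref{resolvent-es} alone does not deliver; the only contribution collected in the process is the residue of $e^{\lambda z}(\lambda I-\mathcal{A})^{-1}\bigl[\cdots\bigr]$ at the simple pole $\nu_1$. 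Since the spectral projection $P$ maps into $\ker(\nu_1 I-\mathcal{A})=\mathrm{span}\{(\phi_1,\nu_1\phi_1)^{\top}\}$, that residue takes the form $k_2\,e^{\nu_1 z}\bigl(\phi_1(t),\nu_1\phi_1(t)\bigr)^{\top}$ with $k_2\in\mathbb{R}$ computed from $P\bigl[e^{-\nu_1 M}U(M)+\widehat F(\nu_1)\bigr]$, while the remainder integral along $\mathrm{Re}\,\lambda=\alpha^\prime$ is controlled in $Y$ by $C\,e^{\alpha^\prime z}=o(e^{\nu_1 z})$. Reading off the two components of the resulting identity
\begin{equation*}
U(z)=k_2\,e^{\nu_1 z}\bigl(\phi_1(t),\nu_1\phi_1(t)\bigr)^{\top}+o\bigl(e^{\nu_1 z}\bigr)\quad\text{in }Y
\end{equation*}
yields at once both uniform limits stated in the theorem; the positivity $k_2>0$ is obtained at the very end by contradiction, because a non-positive $k_2$ combined with the $o(e^{\nu_1 z})$ remainder would force $u(t,z)\leq0$ for all sufficiently large $z$, in contradiction with $u>0$ on $\mathbb{R}\times[M,+\infty)$.

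The principal obstacle will be the rigorous execution of this contour shift in the Banach-space setting: closing the horizontal legs of the Bromwich contour, and in particular securing enough decay at large $|\mathrm{Im}\,\lambda|$, requires combining the strip estimate \eqref{resolvent-es} with a careful asymptotic analysis of $\widehat F(\lambda)$ along vertical lines, exactly in the spirit of the treatment used for the $v$-component in Proposition \ref{1<2}, so that Cauchy's residue theorem can be legitimately applied to extract the $\nu_1$-residue despite the absence of a $C_0$-semigroup structure for $\mathcal{A}$.
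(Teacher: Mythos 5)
Your overall strategy — treating $z$ as the evolution variable for $(u,u_z)$, taking a Laplace transform, continuing meromorphically across $\mathrm{Re}\,\lambda=\nu_1^+$ and extracting the residue at the simple pole $\nu_1$ — is the same as the paper's, but your implementation by a one-sided transform on $[M,+\infty)$ runs into an obstacle that the paper's construction is specifically designed to avoid, and your proposal does not resolve it. Both the Bromwich inversion and the leftward contour shift need integrability of the integrand along vertical lines, and the only decay estimate available, \eqref{resolvent-es}, concerns the restriction $(\lambda I-\mathcal A)_S^{-1}$ to the subspace $S=\left\{(0,j)^{\top}:j\in L_T^2\right\}$. Your boundary term $e^{-\lambda M}U(M)$ has a nonzero first component, so it lies outside $S$, and its $Y$-norm is independent of $\mathrm{Im}\,\lambda$; hence neither the convergence of the inversion integral nor the vanishing of the horizontal legs follows from \eqref{resolvent-es}. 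The proposed integration by parts in $\lambda$ gains factors of $z^{-1}$, not decay in $|\mathrm{Im}\,\lambda|$, and its own boundary contributions at $\pm i\infty$ presuppose exactly the decay in question. The paper's device — multiplying $u$ by the cutoff $\chi$ supported in $z\geq -1$ and taking the two-sided transform of $(\chi u,(\chi u)_z)$ — removes the boundary term altogether and places the entire inhomogeneity in $S$ with a $W^{1,1}$ bound in $z$, which yields $\|(\lambda I-\mathcal A)^{-1}G(\lambda)\|_Y\leq C|\mathrm{Im}\,\lambda|^{-2}$ and makes the shift legitimate. Without this (or a genuinely proved decay estimate for the full resolvent acting on data outside $S$), the central step of your argument is not justified.

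Two further steps are incomplete. First, your positivity argument only excludes $k_2<0$: if $k_2=0$, the conclusion $u=o(e^{\nu_1 z})$ is perfectly compatible with $u>0$, so no contradiction arises. The paper rules out $k_2=0$ quantitatively: the remainder $\zeta=u-k_2e^{\nu_1 z}\phi_1$ is shown to be $O(e^{\gamma z})$ with an explicit $\gamma<\nu_1$, and this contradicts the a priori lower bound $u\geq K_1e^{\nu_1^- z}$ from Lemma \ref{general-estimate-+} once $\nu_1^-$ is chosen in $(\gamma,\nu_1)$. Second, your final identity holds in $Y=L_T^2\times L_T^2$, and $L^2$-in-$t$ control does not ``at once'' give the limits uniformly in $t\in\mathbb R$; the paper upgrades the $L^2$ bound on $\zeta$ along the shifted contour to a sup-in-$t$ bound by interior parabolic estimates applied to the equation satisfied by $\zeta$, followed by Sobolev embedding, and then repeats the whole argument for $u_z$ with $\tilde\zeta=u_z-k_2\nu_1e^{\nu_1 z}\phi_1$. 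These steps must be carried out for your proof to reach the stated uniform limits.
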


\begin{proof}
Introduce an auxiliary function
$$
\left\{
{\chi\in C_b^3(R,R)\text{ with~~}
{\begin{gathered}
{ \chi(z)\equiv 1,~z \geq 0;}\hfill\\
{ \chi(z)\equiv 0,~z <-1;} \hfill\\
{ \left|{{\chi^\prime}}\right|+\left|{{\chi^{\prime\prime}}}\right|
+\left|{{\chi^{\prime\prime\prime}}}\right|<\infty
~\text{for any } z\in\mathbb R}
\end{gathered}}}
\right\}.
$$
Setting
$w={u_z},~\breve{u}=\chi u,~\breve{w}={(\chi u)_z},$
then a direct calculation shows that
\begin{equation}\label{wwu}
\breve{w}_z+c\breve{w}=\breve{u}_t-{g_u}(t,0,0)\breve{u}+\chi[{g_u}(t,0,0)u-g(t,u,v)]
 +{\chi^{\prime\prime}}u+2{\chi^\prime}{u_z}+c{\chi^\prime}u.
\end{equation}
Denote
$$
\tilde g(t,z)=\chi[{g_u}(t,0,0)u-g(t,u,v)]
 +{\chi^{\prime\prime}}u+2{\chi^\prime}{u_z}+c{\chi^\prime}u,
 $$
then \eqref{wwu} can be rewritten as the following first order system
\begin{equation}\label{ODEs}
\frac{d}{{dz}}\left({\begin{array}{*{20}{c}}
  {{\breve{u}}}\\{{\breve{w}}}\end{array}}\right)=\mathcal A \left({\begin{array}{*{20}{c}}
  {{\breve{u}}}\\{{\breve{w}}}\end{array}}\right)+\left({\begin{array}{*{20}{c}}0 \\
  {\tilde g(t,z)}\end{array}} \right),
\end{equation}
where $\mathcal A$ is defined in \eqref{operatorA}.

Let
$0<\varepsilon^\prime<\min\left\{-\frac{\nu_1}{3},\frac{3}{4}(\nu_1-\nu_2)\right\}$
be sufficiently small such that
$\Theta_{\varepsilon^\prime}\cap\sigma(\mathcal A)=\{\nu_1\}.$
For this fixed $\varepsilon^\prime>0,$
it follows from $\nu_1>\nu_2$ and Remark \ref{approximation} that
there exists a small enough
$0<\epsilon<\min\left\{1,\frac{\kappa_2}{C_1^+}\right\}$
such that
$\nu_{2,\epsilon}^+=\nu_2+\varepsilon_2^+(\epsilon)<\nu_1,$
where $\nu_{2,\epsilon}^+$ is defined in
Lemma \ref{general-estimate-+}.
Therefore, we can take $\nu_1^+$ with
$0>\nu_1^+>\max\{\nu_1,\nu_{2,\epsilon}^+\}=\nu_1$ such that $0<\varepsilon^+:=\nu_1^+-\nu_1<\frac{1}{2}\varepsilon^\prime.$

In view of \eqref{general-estimate-derives-+}, we see
$\mathop{\sup}_{t\in R}(\left|{{\breve{u}}}\right|
+\left|{{\breve{w}}}\right|+\left|{\breve{u}_z}\right|
+\left|{\breve{w}_z}\right|)=O({e^{\nu_1^+z}})$
as $z\to+\infty.$
Thus for any $Re\lambda\in(\nu_1^+,\nu_1+{\varepsilon^\prime}],$
there holds
$({e^{-\lambda z}}{\breve{u}},{e^{-\lambda z}}{\breve{w}})
\in {W^{1,1}}(\mathbb R,Y)\cap{W^{1,\infty}}(\mathbb R,Y).
$
Now taking the two-sided Laplace transform of \eqref{ODEs} with respect to $z,$
we obtain
\begin{equation}\label{laplace-eq}
\left({\begin{array}{*{20}{c}}
{\int_\mathbb R{{e^{-\lambda s}}{\breve{u}}(\cdot,s)ds}}
\\{\int_\mathbb R{{e^{-\lambda s}}{\breve{w}}(\cdot,s)ds}}
\end{array}}\right)=:\mathcal F(\lambda)={(\lambda I-\mathcal A)^{-1}}\left({\begin{array}{*{20}{c}} 0 \\
{\int_\mathbb R{{e^{-\lambda s}}\tilde g(\cdot,s)ds}}\end{array}}\right),
\end{equation}
where $\nu_1^+<Re\lambda\leq\nu_1+{\varepsilon^\prime}.$
It then follows from Lemma \ref{general-estimate-derives} and Proposition \ref{1<2} that
$$
\left|{{g_u}(t,0,0)u-g(t,u,v)}\right|=O(\left|{v}\right|
+\left|{u}\right|^2)=O(e^{\sigma z})
\text{ as } z\to+\infty,
$$
where $\sigma=\max\{\nu_2,2\nu_1^+\}<\nu_1.$
Recall the definition of $\varepsilon^\prime,$
there is $\nu_1-\frac{4}{3}\varepsilon^\prime\geq\sigma.$ Then
$$
\mathop{\sup}\limits_{t\in\mathbb R}\left({\left|{\tilde g}\right|+\left|{{\tilde g}_z}\right|}\right)=O\left({e^{(\nu_1-\frac{4}{3}\varepsilon^\prime)z}}\right)
\text{ as }z\to+\infty.$$
Therefore,
${\int_\mathbb R{{e^{-\lambda s}}\tilde g(\cdot,s)ds}}$ and
${\int_\mathbb R{{e^{-\lambda s}}\widetilde g_z(\cdot,s)ds}}$
are analytic for $\lambda$ with $Re\lambda\in(\nu_1-\frac{4}{3}\varepsilon^\prime,0).$
Let $\lambda=\mu+i\eta,$ then
$\int_\mathbb R{{e^{-\lambda s}}\tilde g(\cdot,s)ds}
=\int_\mathbb R{{e^{-i\eta s}}\cdot{e^{-\mu s}}\tilde g(\cdot,s)ds}=\hat{f_\mu}(\eta),$
where $\hat{f_\mu}$ is the Fourier transform of
${f_\mu}(s):={e^{-\mu s}}\tilde g(\cdot,s).$
It is easy to see that
$${f_\mu}(s)\in{W^{1,1}}(\mathbb R,L_T^2)\cap{W^{1,\infty}}(\mathbb R,L_T^2)
\text{ for any fixed } \mu\in\left[\nu_1-\varepsilon^\prime,-\frac{1}{2}{\varepsilon^\prime}\right].
$$
Particularly,
${\left\|{{e^{-\mu s}}\tilde g}\right\|_{{W^{1,1}}(\mathbb R,L_T^2)}}$
is uniformly bounded
in $\mu\in[\nu_1-\varepsilon^\prime,-\frac{1}{2}{\varepsilon^\prime}].$
Then there exist $C_1>0$ and $\varrho_1>0$ such that
for any $\left|\eta\right|\geq{\varrho_1},$
$$
{\left\|{\hat{f}(\eta)}\right\|_{L_T^2}}={\left\|{\int_\mathbb R{{e^{-\lambda s}}
\tilde g(\cdot,s)ds}}\right\|_{L_T^2}}
\leq\frac{C_1}{{\left|\eta\right|}}\text{ whenever } \mu\in\left[\nu_1-\varepsilon^\prime,-\frac{1}{2}{\varepsilon^\prime}\right].
$$
Moreover, by \eqref{resolvent-es}, there exist $C_2>0$ and $\varrho_2>0$ such that
\begin{equation}\label{resolvent-ineq}
\left\|{{(\lambda I-\mathcal A)^{-1}}G(\lambda)}\right\|_Y\leq\frac{C_2}{{\left|\eta\right|}^2}
\text{ for any } \left|\eta\right|\geq\varrho_2,
~\mu\in[\nu_1-\varepsilon^\prime,\nu_1+\frac{1}{2}{\varepsilon^\prime}]\setminus\{\nu_1\},
\end{equation}
where
$$
G(\lambda)=\left({\begin{array}{*{20}{c}}0\\{\int_\mathbb R{{e^{-\lambda s}}\tilde g(\cdot,s)ds}}\end{array}}\right).
$$
Thus
$
\mathcal F(\lambda)=\mathcal F(\mu+i\eta)\in{L^1}(\mathbb R,Y)\cap{L^\infty}(\mathbb R,Y)
$
for any $\mu\in\left[\nu_1-\varepsilon^\prime,\nu_1+\frac{1}{2}{\varepsilon^\prime}\right]
\setminus\{\nu_1\}.$

Let $\mu=\nu_1+\frac{1}{2}{\varepsilon^\prime},$
the inverse Laplace transform implies that
\begin{equation}\label{inverse-laplace}
\left({\begin{array}{*{20}{c}}{{u}(\cdot,z)}\\{{w}(\cdot,z)}
\end{array}}\right)=\left({\begin{array}{*{20}{c}}
{{\breve{u}}(\cdot,z)}\\{{\breve{w}}(\cdot,z)}
\end{array}}\right)
=\frac{1}{{2\pi i}}\int_{\mu-i\infty}^{\mu+i\infty}{{e^{\lambda z}}
{{(\lambda I-\mathcal A)}^{-1}}G(\lambda)}d\lambda
\text{ for any } z\geq 0.
\end{equation}
By virtue of \eqref{resolvent-ineq}, there holds that
$$
\mathop{\lim}\limits_{\left|\eta\right|\to\infty}
{\int_{\nu_1-{\varepsilon^\prime}}^\mu{\left\|{{e^{(\tau+i\eta)z}}{{((\tau+i\eta )I
-\mathcal A)}^{-1}}G(\tau+i\eta)}\right\|}_Y}d\tau=0
\text{ for any } z\geq 0.
$$
Therefore, the path of the integral in \eqref{inverse-laplace} can be shifted to
$Re\lambda=\nu_1-\varepsilon^\prime$ such that
\begin{equation}\label{uw-expression}
\begin{aligned}
\left({\begin{array}{*{20}{c}}{{u}(\cdot,z)}\\{{w}(\cdot,z)}\end{array}}\right)
=\frac{1}{{2\pi i}}\int_{\nu_1-\varepsilon^\prime-i\infty}
^{\nu_1-\varepsilon^\prime+i\infty}
{{e^{\lambda z}}{{(\lambda I-\mathcal A)}^{-1}}G(\lambda)}d\lambda
+Res(e^{\lambda z}\mathcal F(\lambda),\nu_1)
\end{aligned}
\end{equation}
for any $z\geq 0,$
where
$
Res(g,\lambda_0):=\frac{1}{{2\pi i}}\int_{\Gamma:\left|{\lambda-\lambda_0}\right|
<{\varepsilon^{\prime\prime}}}{g(\lambda)d\lambda}
$
denotes the residue of $g$ at $\lambda_0.$
Furthermore, with the aid of
$$
{(\lambda I-\mathcal A)^{-1}G(\lambda)}=
\sum\limits_{n=0}^\infty{{(-1)^n}{{(\lambda-\nu_1)}^n}{S^{n+1}}G(\lambda)}
+\frac{PG(\nu_1)}{\lambda-\nu_1}
-\frac{P[G(\nu_1)-G(\lambda)]}{\lambda-\nu_1}
$$
for $\left|{\lambda-\nu_1}\right|<\varepsilon^{\prime\prime}$
with some $\varepsilon^{\prime\prime}$ small enough,
$$
PG\subset{\rm ker}(\nu_1I-\mathcal A)
={\rm span}\left\{{\left({\begin{array}{*{20}{c}}{\phi_1}
\\{\nu_1\phi_1}\end{array}}\right)}\right\}
$$
and $G(\lambda)$ is analytic in
$Re\lambda\in\left(\nu_1-\frac{4}{3}\varepsilon^\prime,0\right),$
the residue theorem implies that
\begin{equation}\label{uw-exact-expression}
\begin{aligned}
\left({\begin{array}{*{20}{c}}{u(t,z)}\\{w(t,z)}\end{array}}\right)=
&\frac{e^{(\nu_1-{\varepsilon^\prime})z}}{2\pi}\int_{-\infty}^{+\infty}
{{e^{i\eta z}}{{((\nu_1-{\varepsilon^\prime}+i\eta)I
-\mathcal A)}^{-1}}G(\nu_1-{\varepsilon^\prime}+i\eta)}d\eta \\
&+{k_2}{e^{\nu_1z}}\left({\begin{array}{*{20}{c}}{\phi_1(t)}\\
{\nu_1\phi_1(t)}\end{array}}\right)
 \text{ for any } z\geq 0,
\end{aligned}
\end{equation}
where $k_2 \geq 0$ is some constant.

Let $\zeta(t,z)=u(t,z)-{k_2}{e^{\nu_1z}}\phi_1(t)$
for any $(t,z)\in\mathbb R\times{\mathbb R^+}.$
It follows from \eqref{uw-exact-expression} and \eqref{resolvent-ineq} that
there exists $C_3>0$ such that
$$
{\left({\int_{z-1}^{z+1}}{{\int_0^{2T}
{{{\left|{\zeta(\tau,s)}\right|}^2}d\tau ds}}}\right)^{\frac{1}{2}}}
\leq{C_3}{e^{(\nu_1-{\varepsilon^\prime})z}}
\text{ for any } z\geq 0.
$$
Note that $\zeta(t,z)$ satisfies
$$
[g(t,u,v)-{g_u}(t,0,0)u]+{g_u}(t,0,0)\zeta+{\zeta_{zz}}+c{\zeta_z}-{\zeta_t}=0
\text{ for any } (t,z)\in R\times{R^+},
$$
and
$$
|g(t,u,v)-{g_u}(t,0,0)u|=O({e^{\sigma z}}) \text{ as } z\to+\infty,
$$
the interior parabolic estimates then yield that
there exists $C_4>0$ independent of $z$ such that
$$
\left({\int_{z-\frac{1}{2}}^{z+\frac{1}{2}}{\int_T^{2T}
{\left({{{\left|{{\zeta_{zz}}(\tau,s)}\right|}^2}+{{\left|{{\zeta_z}(\tau,s)}\right|}^2}
+{{\left|{{\zeta_t}(\tau,s)}\right|}^2}}\right)d\tau ds}}}\right)^{\frac{1}{2}}
\leq{C_4}{e^{\gamma z}}
\text{ for any } z\geq 0,
$$
where
$
\gamma=\max\{\nu_1-\varepsilon^\prime,\sigma\}
=\max\{\nu_1-\varepsilon^\prime,\nu_2\}<\nu_1.
$
It follows from the Sobolev embedding theorem that there exists $C_5>0$ such that
$$
\mathop{\sup}\limits_{t\in[0,T]}\left|{\zeta(t,z)}\right|\leq{C_5}{e^{\gamma z}}
\text{ for any } z\geq 0.
$$
Since $\nu_1^-<\nu_1$ is arbitrary,
we can take some $\nu_1^-$ such that $\nu_1^-\geq\gamma.$ Using
Lemma \ref{general-estimate-+}, we then see $k_2>0.$
Consequently,
$$
\mathop{\lim}\limits_{z\to+\infty}\frac{u(t,z)}{k_2{e^{\nu_1z}}\phi_1(t)}=1
 \text{ uniformly in } t\in \mathbb R.
$$

Similarly, let
$$
\tilde\zeta(t,z)={u_z}(t,z)-{k_2}\nu_1{e^{\nu_1z}}\phi_1(t) \text{ for any } (t,z)\in \mathbb R\times\mathbb R^+.
$$
It then follows from \eqref{resolvent-ineq} that there exits $C_6>0$ such that
$$
{\left({\int_{z-1}^{z+1}}{{\int_0^{2T}
{{{\left|{\tilde\zeta(\tau,s)}\right|}^2}d\tau ds}}}\right)^{\frac{1}{2}}}
\leq{C_6}{e^{(\nu_1-{\varepsilon^\prime})z}}
\text{ for any } z\geq0.
$$
Noting that $\tilde\zeta$ satisfies
$$
[{g_v}(t,u,v){v_z}+({g_u}(t,u,v)-{g_u}(t,0,0)){u_z}]+{g_u}(t,0,0)\tilde\zeta
+{\tilde\zeta_{zz}}+c{\tilde\zeta_z}-{\tilde\zeta_t}=0
$$
for any $(t,z)\in\mathbb R\times{\mathbb R^+}$, where
$$
|{g_v}(t,u,v){v_z}+({g_u}(t,u,v)-{g_u}(t,0,0)){u_z}|=O(e^{\alpha z})
\text{ as } z\to+\infty,
$$
with $\alpha=\max\{\nu_2,\nu_1+\nu_1^+\}<\nu_1.$
The same argument as above shows that there exists $C_7>0$ such that
$$
\mathop{\sup}\limits_{t\in[0,T]}\left|{\tilde\zeta(t,z)}\right|
\leq {C_7}{e^{\beta z}}
\text{ for any } z\geq 0,
$$
where
$\beta=\max\{\alpha,\nu_1-\varepsilon^\prime\}
=\max\{\nu_2,\nu_1-\varepsilon^\prime\}<\nu_1.$
Therefore,
$$
\mathop{\lim}\limits_{z\to+\infty}\frac{u_z(t,z)}{{k_2}{e^{\nu_1z}}\phi_1(t)}
=\nu_1
\text{ uniformly in } t\in \mathbb R.$$
The proof is complete.
\end{proof}

Then we study the exponential behavior of $u$ as $z\to+\infty$ when $\nu_1=\nu_2.$

\begin{theorem}\label{1=2}
Assume (A1)-(A3). Let $(u(t,z),v(t,z))$ be a solution of \eqref{UV}.
If $\nu_1=\nu_2,$ then
\begin{equation}\label{u-uz-rate-1=2}
\mathop{\lim}\limits_{z\to+\infty}\frac{u(t,z)}
{\vartheta_1\left|{z}\right|{e^{\nu_1z}}\phi_1(t)}=1,
~\mathop{\lim}\limits_{z\to+\infty}\frac{u_z(t,z)}
{\vartheta_1\left|{z}\right|{e^{\nu_1z}}\phi_1(t)}=\nu_1\text{ uniformly in } t\in \mathbb R,
\end{equation}
where
$$
\vartheta_1=-(2\nu_1+c)^{-1}\overline{\varrho_1(t)},~ \varrho_1(t)=k_1b_1(t)q(t)\phi_2(t)\phi_1^{-1}(t),
$$
and $k_1$ is defined in Proposition \ref{1<2}.

\end{theorem}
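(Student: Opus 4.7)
The plan is to extend the two-sided Laplace transform argument from the proof of Theorem \ref{1>2} to the resonant setting $\nu_1=\nu_2$. The crucial new phenomenon is that the inhomogeneity $b_1(t)q(t)v(t,z)$ in the linearised $u$-equation now decays at exactly the same exponential rate as the eigenfunction $\phi_1(t)e^{\nu_1 z}$, so the Laplace transform of the source itself acquires a simple pole at $\lambda=\nu_1$; combined with the simple pole of $(\lambda I-\mathcal A)^{-1}$ already exploited in Theorem \ref{1>2}, the product $(\lambda I-\mathcal A)^{-1}G(\lambda)$ now develops a \emph{double} pole at $\nu_1$, whose residue contains an extra factor $z$ and accounts for the $|z|$ in \eqref{u-uz-rate-1=2}. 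Concretely, first reuse the cutoff $\chi$, $\breve u=\chi u,~\breve w=\breve u_z$, and the first-order system $(\breve u,\breve w)_z^T=\mathcal A(\breve u,\breve w)^T+(0,\tilde g)^T$ from the proof of Theorem \ref{1>2}. Apply Proposition \ref{1<2} (whose conclusion on $v$ is independent of the ordering of $\nu_1,\nu_2$) to get $v(t,z)=k_1e^{\nu_1 z}\phi_2(t)+O(e^{(\nu_1-\varepsilon)z})$, choose $\nu_1^+$ from Lemma \ref{general-estimate-+} close enough to $\nu_1$ so that $2\nu_1^+<\nu_1-\varepsilon'$, absorb the quadratic pieces $a_1pu^2-b_1quv$ into the remainder, and conclude
\[
\tilde g(t,z)=-k_1b_1(t)q(t)\phi_2(t)\,e^{\nu_1 z}+O\bigl(e^{(\nu_1-\varepsilon')z}\bigr)\quad\text{as }z\to+\infty.
\]

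Consequently, in a small vertical strip around $\lambda=\nu_1$,
\[
G(\lambda)=-\frac{1}{\lambda-\nu_1}\bigl(0,\,k_1b_1(\cdot)q(\cdot)\phi_2(\cdot)\bigr)^{T}+G_0(\lambda),
\]
with $G_0$ analytic. Multiplying by the Laurent series \eqref{laurent} of $(\lambda I-\mathcal A)^{-1}$ and shifting the inverse Laplace contour from $\mathrm{Re}\,\lambda=\nu_1+\varepsilon'/2$ to $\mathrm{Re}\,\lambda=\nu_1-\varepsilon'$ picks up the double-pole residue
\[
\mathrm{Res}_{\lambda=\nu_1}\bigl\{e^{\lambda z}(\lambda I-\mathcal A)^{-1}G(\lambda)\bigr\}=z\,e^{\nu_1 z}A(t)+e^{\nu_1 z}B(t),\qquad A(t)=-P\bigl(0,k_1b_1q\phi_2\bigr)^{T}.
\]
To identify the constant $\vartheta_1$, compute $A(t)$ explicitly using the adjoint eigenfunction of $\mathcal A$ at $\nu_1$. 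A direct calculation on the formal adjoint $\mathcal A^*\bigl(\phi^*,\psi^*\bigr)^T=\bigl(-\psi^{*\prime}+a_1p\psi^*,\phi^*-c\psi^*\bigr)^T$ yields the adjoint eigenfunction $(\phi^*,\psi^*)(t)=\bigl((\nu_1+c)\phi_1^{-1}(t),\phi_1^{-1}(t)\bigr)$, the normalising pairing $\langle(\phi_1,\nu_1\phi_1),(\phi^*,\psi^*)\rangle=T(2\nu_1+c)$, and hence
\[
P\bigl(0,k_1b_1q\phi_2\bigr)^{T}=\frac{\overline{\varrho_1}}{2\nu_1+c}(\phi_1,\nu_1\phi_1)^{T}=-\vartheta_1(\phi_1,\nu_1\phi_1)^{T},
\]
so the first component of the residue equals $\vartheta_1 ze^{\nu_1 z}\phi_1(t)+O(e^{\nu_1 z})$.

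Finally, the shifted contour integral is controlled in $L^2_T$ by $O(e^{(\nu_1-\varepsilon')z})$ through the resolvent bound \eqref{resolvent-es}, and upgraded to a sup-norm bound by interior parabolic estimates together with the Sobolev embedding exactly as at the end of the proof of Theorem \ref{1>2}. This delivers $u(t,z)=\vartheta_1 ze^{\nu_1 z}\phi_1(t)+O(e^{\nu_1 z})$ uniformly in $t\in\mathbb R$, from which the first limit in \eqref{u-uz-rate-1=2} follows upon division by $\vartheta_1|z|e^{\nu_1 z}\phi_1(t)$; the limit for $u_z$ is obtained from the second component of the residue by the same scheme. The main obstacle is the double-pole residue calculation: because $(\lambda I-\mathcal A)^{-1}$ has only a rank-one pole at $\nu_1$, one must carry two terms of both Laurent expansions and simultaneously track the derivative $\frac{d}{d\lambda}e^{\lambda z}|_{\nu_1}=ze^{\nu_1 z}$; pinning down the \emph{precise} constant $\vartheta_1=-(2\nu_1+c)^{-1}\overline{\varrho_1}$, rather than merely the growth order $|z|e^{\nu_1 z}$, is what forces the adjoint eigenfunction computation above. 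A useful consistency check is the formal ansatz $u(t,z)=W(t,z)\phi_1(t)e^{\nu_1 z}$, which reduces the linearised $u$-equation to $W_t-W_{zz}-(2\nu_1+c)W_z=\varrho_1(t)+o(1)$; imposing that $W$ admits a bounded affine-in-$z$ solution $W\sim\vartheta_1 z+\beta(t)$ and averaging the resulting equation for $\beta$ over one period recovers exactly $\vartheta_1=-\overline{\varrho_1}/(2\nu_1+c)$, in agreement with the residue computation.
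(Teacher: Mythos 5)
Your proposal is correct in substance, but it proves the theorem by a genuinely different route from the paper. The paper's own proof is elementary and PDE-based: it writes down the explicit approximate solution $e^{\nu_1z}\phi_1(t)\bigl(\vartheta_1|z|+\phi^*(t)\bigr)$ of the linearized inhomogeneous equation, where $\phi^*(t)=\int_0^t\bigl[(2\nu_1+c)\vartheta_1+\varrho_1(s)\bigr]ds$ is $T$-periodic precisely because $\vartheta_1=-(2\nu_1+c)^{-1}\overline{\varrho_1}$ kills the mean of the integrand; it then sets $\xi=u-e^{\nu_1z}\phi_1(\vartheta_1|z|+\phi^*)$, uses \eqref{v-higer} and \eqref{general-estimate-derives-+} to bound the forcing, and traps $\xi$ between the explicit barriers $\pm\bigl(Ae^{\nu_1z}-Be^{(\nu_1-\varepsilon)z}\bigr)\phi_1(t)$ by a maximum-principle argument, so that the error is $O(e^{\nu_1z})=o(|z|e^{\nu_1z})$; the same scheme handles $u_z$. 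Your route instead pushes the operator-theoretic machinery of Theorem \ref{1>2} into the resonant case: the source $\tilde g$ now decays exactly like $e^{\nu_1z}$, its transform must be continued meromorphically past $\nu_1$ (note the integral itself diverges for $\operatorname{Re}\lambda\le\nu_1$, so the splitting $G=-(\lambda-\nu_1)^{-1}(0,k_1b_1q\phi_2)^T+G_0$ with a smooth subtracted profile is not optional but the definition of the continuation), the product with \eqref{laurent} has a double pole, and the residue yields the $ze^{\nu_1z}$ term; your adjoint computation $\mathcal A^*(k_1,k_2)^T=(-k_2'+a_1pk_2,\,k_1-ck_2)^T$, eigenfunction $\bigl((\nu_1+c)\phi_1^{-1},\phi_1^{-1}\bigr)$, pairing $T(2\nu_1+c)\neq0$, and projection formula correctly reproduce $\vartheta_1$, and the error on the shifted contour is handled by \eqref{resolvent-es} plus parabolic estimates and Sobolev embedding as in Theorem \ref{1>2}. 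What each buys: the paper's barrier argument avoids all contour-shift and continuation bookkeeping and is shorter, while your spectral/residue argument is more systematic, explains structurally why the $|z|$ factor appears (algebraically simple eigenvalue hit resonantly by the forcing), and would extend to more general resonant situations. Amusingly, your final ``consistency check'' (the ansatz $u=W\phi_1e^{\nu_1z}$ with $W\sim\vartheta_1z+\beta(t)$ and the periodicity-of-$\beta$ averaging condition) is essentially the paper's actual proof mechanism, so you in fact sketched both arguments.
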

\begin{proof}
Define
$$
\phi^*(t)=\int_0^t{\left[(2\nu_1+c)\vartheta_1+k_1b_1(s)q(s)\phi_2(s)\phi_1^{-1}(s)\right]}ds,
$$
then it is easy to see that $\phi^*$ is $T-$periodic in $t\in\mathbb R.$
A direct calculation shows that
$\omega(t,z):=e^{\nu_1z}\phi_1(t)(\vartheta_1\left|{z}\right|+\phi^*(t))$ satisfies
$$
g_u(t,0,0)\omega+k_1g_v(t,0,0)e^{\nu_1z}\phi_2(t)+\omega_{zz}+c\omega_z-\omega_t=0
\text{ for any } z\geq0.
$$
Let
$$
\xi(t,z)=u(t,z)-e^{\nu_1z}\phi_1(t)(\vartheta_1\left|{z}\right|+\phi^*(t)),
~\eta(t,z)=v(t,z)-k_1e^{\nu_1z}\phi_2(t),
$$
then $\xi(t,z)$ satisfies
\begin{equation*}
R(t,z)+g_v(t,0,0)\eta+g_u(t,0,0)\xi+\xi_{zz}+c\xi_z-\xi_t=0,
\end{equation*}
where
$R(t,z)=g(t,u,v)-g_u(t,0,0)u-g_v(t,0,0)v.$
It follows from \eqref{v-higer} that there exist
$K^\prime, \varepsilon^\prime, M_1>0$ such that
$$
\mathop{\sup}\limits_{t\in[0,T]}\left|{\eta (t,z)}\right|\leq{K^\prime}{e^{(\nu_2-{\varepsilon^\prime})z}}
\text{ for any } z\geq M_1.
$$
On the other hand, \eqref{general-estimate-derives-+} implies that
there exist $C_1, M_2>0$ such that
$$
\left|{R(t,z)}\right|=\left|{g(t,u,v)-g_u(t,0,0)u-g_v(t,0,0)v}\right|
\leq C_1e^{2\nu_1^+z}
\text{ for any } (t,z)\in\mathbb R\times[M_2,+\infty).
$$
Set
\begin{align*}
M=&\max\{M_1,M_2\},~\varepsilon\in\left(0,\min\{\varepsilon^\prime,\nu_1-2\nu_1^+\}\right),\\
\rho=&(\nu_1-\varepsilon)^2+c(\nu_1-\varepsilon)-\kappa_1>0,
~B=\frac{\mathop{\max}\limits_{t\in\mathbb R} g_v(t,0,0)K^\prime+C_1}{\rho\mathop{\min}\limits_{t\in\mathbb R}\phi_1(t)},\\
A\geq&\max\left\{Be^{-\varepsilon M},~
\vartheta_1M+B+\mathop{\max}\limits_{t\in\mathbb R}\phi^*(t),~
\frac{1}{e^{\nu_1M}\mathop{\min}\limits_{t\in\mathbb R}\phi_1(t)}
-\vartheta_1M-\mathop{\min}\limits_{t\in\mathbb R}\phi^*(t)+B\right\}.
\end{align*}
Moreover,
$\overline{\omega}(t,z):=(Ae^{\nu_1z}-Be^{(\nu_1-\varepsilon)z})\phi_1(t)$
satisfies
\begin{equation*}
\begin{aligned}
&\overline{\omega}_{zz}+c\overline{\omega}_z-\overline{\omega}_t+g_u(t,0,0)\overline{\omega}
+g_v(t,0,0)\eta+R(t,z)\\
&=-\rho Be^{(\nu_1-\varepsilon)z}\phi_1(t)
+g_v(t,0,0)\eta+R(t,z)\\
&\leq-\rho Be^{(\nu_1-\varepsilon)z}\phi_1(t)
+\mathop{\max}\limits_{t\in\mathbb R}g_v(t,0,0)K^\prime e^{(\nu_1-\varepsilon)z}+C_1e^{2\nu_1^+z}\\
&\leq\left(-\rho B\mathop{\min}\limits_{t\in\mathbb R}\phi_1(t)
+\mathop{\max}\limits_{t\in\mathbb R}
g_v(t,0,0)K^\prime+C_1\right)e^{(\nu_1-\varepsilon^\prime)z}\\
&\leq0 ~\text{ for any } (t,z)\in\mathbb R\times [M,+\infty).
\end{aligned}
\end{equation*}
Similarly, $\underline{\omega}(t,z):=-\overline{\omega}(t,z)
=(Be^{(\nu_1-\varepsilon)z}-Ae^{\nu_1z})\phi_1(t)$
satisfies
\begin{equation*}
\underline{\omega}_{zz}+c\underline{\omega}_z
-\underline{\omega}_t+g_u(t,0,0)\underline{\omega}
+g_v(t,0,0)\eta+R(t,z)\geq 0
\end{equation*}
for any $(t,z)\in\mathbb R\times [M,+\infty).$
Next, we prove that
\begin{equation}\label{sub-super}
(Be^{(\nu_1-\varepsilon)z}-Ae^{\nu_1z})\phi_1(t)
\leq\xi(t,z)
\leq(Ae^{\nu_1z}-Be^{(\nu_1-\varepsilon)z})\phi_1(t)
\end{equation}
for any $(t,z)\in\mathbb R\times[M,+\infty).$
Let
$$
\omega^{\pm}(t,z)=\pm(Ae^{\nu_1z}-Be^{(\nu_1-\varepsilon)z})\phi_1(t)-\xi(t,z).
$$
Since ${\omega^\pm}(t,z)$ is $T-$ periodic in $t,$
it is sufficient to show that
${\omega^+}(t,z)\geq0$ for any $(t,z)\in(0,2T)\times[M,+\infty).$
Note that
\begin{equation}\label{w+}
\omega^+_{zz}+c\omega^+_z-\omega^+_t+g_u(t,0,0)\omega^+\leq 0
\text{ for any }(t,z)\in(0,2T)\times[M,+\infty),
\end{equation}
and
\begin{equation*}
\begin{aligned}
{\omega^+}(t,M)&=(Ae^{\nu_1M}-Be^{(\nu_1-\varepsilon)M})\phi_1(t)-\left[u(t,M)
-e^{\nu_1M}\phi_1(t)(\vartheta_1 M+\phi^*(t))\right]\\
&=\left[A+\vartheta_1M+\phi^*(t)\right]e^{\nu_1M}\phi_1(t)
-Be^{(\nu_1-\varepsilon)M}\phi_1(t)-u(t,M)\\
&\geq\left[A+\vartheta_1M+\phi^*(t)-B\right]e^{\nu_1M}\phi_1(t)-1\\
&\geq 0 ~\text{ for any } t\in(0,2T).
\end{aligned}
\end{equation*}
Assume on the contrary that
$$
\mathop{\inf}\limits_{(t,z)\in(0,2T)\times[M,+\infty)}{\omega^+}(t,z)<0,
$$
due to
$\mathop{\lim}\limits_{z\to+\infty}\mathop{\sup}\limits_{t\in(0,2T)}{\omega^+}(t,z)=0,$
then there exists $({t^*},{z^*})\in(0,2T)\times(M,+\infty )$ such that
$$
{\omega^+}({t^*},{z^*})=\mathop {\inf}\limits_{(t,z)\in(0,2T)\times [M,+\infty)}{\omega^+}(t,z)<0,
$$
and hence
$\left[{\omega_{zz}^++c\omega_z^+
-\omega_t^++g_u(t,0,0){\omega^+}}\right]\big|_{({t^*},{z^*})}>0,$
which contradicts to \eqref{w+}.
We can show $\omega^-(t,z)\leq 0$ by a similar argument.
It then follows from \eqref{sub-super} that
$$
\mathop{\sup}\limits_{t\in\mathbb R}\left|{\xi(t,z)}\right|=o({e^{{\nu_1}z}})
\text{ as } z\to+\infty.
$$
Therefore, by the definition of $\xi(t,z)$, we have
$$
\mathop{\lim}\limits_{z\to+\infty}\frac{u(t,z)}
{{\vartheta_1}\left|{z}\right|{e^{\nu_1z}}\phi_1(t)}=1
\text{ uniformly in } t\in \mathbb R.
$$

The argument for $u_z$ is similar. Let
$$
\tilde\xi(t,z)=u_z(t,z)
-{e^{\nu_1z}}\phi_1(t)\left[\vartheta_1(\nu_1\left|{z}\right|+1)+\nu_1\phi^*(t)\right],~
\tilde\eta(t,z)=v_z(t,z)-\nu_1k_1{e^{\nu_1z}}\phi_2(t).
$$
Then
\begin{equation*}
\tilde R(t,z)+g_v(t,0,0)\tilde\eta+g_u(t,0,0)\tilde\xi
+\tilde\xi_{zz}+c\tilde\xi_z-\tilde\xi_t=0
\text{ for any } (t,z)\in\mathbb R\times[0,+\infty),
\end{equation*}
where
$\tilde R(t,z)
=\left[{g_u(t,u,v)-g_u(t,0,0)}\right]u_z+\left[{g_v(t,u,v)-g_v(t,0,0)}\right]v_z.$
Since \eqref{v-higer} shows that
$$
\mathop{\sup}\limits_{t\in[0,T]}\left|{\tilde\eta (t,z)}\right|\leq{K^{\prime\prime}}{e^{(\nu_2-{\varepsilon^\prime})z}}
\text{ for any } z\geq 0,
$$
and \eqref{general-estimate-derives-+} implies that there exists $C_2>0$ such that
$$
\left|{\tilde R(t,z)}\right|\leq C_2e^{2\nu_1^+z} \text{ for any } (t,z)\in\mathbb R\times[0,+\infty),
$$
a similar argument as above yields that
$\mathop{\sup}\limits_{t\in\mathbb R}\left|{\tilde\xi(t,z)}\right|=o({e^{\nu_1z}})$
as $z\to+\infty,$
and hence
$$
\mathop{\lim}\limits_{z\to+\infty}\frac{u_z{(t,z)}}
{{\vartheta_1}\left|{z}\right|{e^{\nu_1z}}\phi_1(t)}=\nu_1
\text{ uniformly in } t\in\mathbb R.
$$
The proof is complete.
\end{proof}

\begin{proof}[\textbf{Proof of Theorem \ref{+infinity}}]
Based on the equivalence of systems \eqref{UV} and \eqref{PQ}, all the conclusions in Theorem \ref{+infinity} can be easily verified with the help of Proposition \ref{1<2},
Theorems \ref{1>2} and \ref{1=2}.
\end{proof}

\begin{proof}[\textbf{Proof of Theorem \ref{-infinity}}]
As $z\to-\infty$, by a parallel discussion to that for $z\to \infty$ in Proposition \ref{1<2},
Theorems \ref{1>2} and \ref{1=2}, we can prove
\begin{equation*}
\begin{aligned}
&\mathop{\lim}\limits_{z\to-\infty}\frac{P(t,z)}{{k_3}{e^{\nu_3z}}\psi_1(t)}=1,
~\mathop{\lim}\limits_{z\to-\infty}\frac{{P_z}(t,z)}{{k_3}{e^{\nu_3z}}\psi_1(t)}=\nu_3
\text{ uniformly in } t\in \mathbb R.\\
&\mathop{\lim}\limits_{z\to-\infty}\frac{Q(t,z)}{{k_3}{e^{\nu_3z}}\tilde{\psi}_2(t)}=1,
~\mathop{\lim}\limits_{z\to-\infty}\frac{{Q_z}(t,z)}{{k_3}{e^{\nu_3z}}\tilde{\psi}_2(t)}=\nu_3
\text{ uniformly in } t\in \mathbb R, \text{ if } \nu_4>\nu_3,\\
&\mathop{\lim}\limits_{z\to-\infty}\frac{Q(t,z)}{{\vartheta_2|z|}{e^{\nu_4z}}\psi_2(t)}=1,
~\mathop{\lim}\limits_{z\to-\infty}\frac{{Q_z}(t,z)}{{\vartheta_2|z|}{e^{\nu_4z}}\psi_2(t)}=\nu_4
\text{ uniformly in } t\in \mathbb R, \text{ if } \nu_4=\nu_3,\\
&\mathop{\lim}\limits_{z\to-\infty}\frac{Q(t,z)}{{k_4}{e^{\nu_4z}}\psi_2(t)}=1,
~\mathop{\lim}\limits_{z\to-\infty}\frac{{Q_z}(t,z)}{{k_4}{e^{\nu_4z}}\psi_2(t)}=\nu_4
\text{ uniformly in } t\in \mathbb R, \text{ if } \nu_4<\nu_3,
\end{aligned}
\end{equation*}
where $k_i>0~(i=3,4)$ are constants,
$$
\psi_1(t)=e^{\int_0^t{(b_1(s)q(s)-a_1(s)p(s))ds}+\kappa_3t},
~\psi_2(t)=e^{\int_0^t{b_2(s)q(s)ds}+\kappa_4t},
$$
$$
\vartheta_2=-(2\nu_4+c)^{-1}\overline{\varrho_2(t)},~ \varrho_2(t)=k_3a_2(t)p(t)\psi_1(t)\psi_2^{-1}(t),
$$
and
\begin{equation*}
\begin{cases}
\tilde{\psi}_2(t)=\tilde{\psi}_2(0){e^{\int_0^t{(\upsilon_2-b_2(s)q(s))ds}}}
+\int_0^t{{e^{\int_s^t{(\upsilon_2-b_2(\tau)q(\tau))d\tau}}}a_2(s)p(s)\psi_1(s)ds},\\
\tilde{\psi}_2(0)={\left({1-{e^{\int_0^T{(\upsilon_2-b_2(s)q(s))ds}}}}\right)^{-1}}
\int_0^T{{e^{\int_s^T{(\upsilon_2-b_2(\tau)q(\tau))d\tau}}}a_2(s)p(s)\psi_1(s)ds}
\end{cases}
\end{equation*}
with $\upsilon_2=\nu_3^2+c\nu_3$. Because of the limited passage, we omit the details here.
\end{proof}

\section{\bf A pair of sub- and supersolutions}

In this section, we establish a comparison theorem and
construct a pair of sub- and supersolutions for system \eqref{uv-trans-eq}.
Let
\begin{equation*}
\begin{cases}
\mathcal{F}_1(t,u,v)=u_t-u_{xx}-f(t,u,v),\\
\mathcal{F}_2(t,u,v)=v_t-dv_{xx}-l(t,u,v),
\end{cases}
\end{equation*}
where
$f(t,u,v)=a_1pu[1-u-N_1(t)(1-v)]$ and $l(t,u,v)=b_2q(1-v)[N_2(t)u-v].$
Then \eqref{uv-trans-eq} can be written as
\begin{equation}\label{F1F2}
\begin{cases}
\mathcal{F}_1(t,u,v)=0,\\
\mathcal{F}_2(t,u,v)=0.
\end{cases}
\end{equation}

\begin{definition}\label{super-sub-def}
A pair of continuous functions $\bm{w}(t,x)=(u(t,x),v(t,x))$
is said to be a supersolution~(subsolution)
of \eqref{F1F2} in $(t,x)\in\mathbb R^+\times\mathbb R,$
if the required derivatives exist and there hold
\begin{equation*}
\begin{cases}
\mathcal{F}_1(t,u,v)=u_t-u_{xx}-f(t,u,v)\geq 0 ~(\leq0),~~(t,x)\in\mathbb R^+\times\mathbb R,\\
\mathcal{F}_2(t,u,v)=v_t-dv_{xx}-l(t,u,v)\geq 0 ~(\leq0),~~(t,x)\in\mathbb R^+\times\mathbb R.
\end{cases}
\end{equation*}
\end{definition}

By similar arguments as in \cite[Theorem 2.2 and Corollary 2.3]{wang2012},
we first state the following comparison principle for cooperative system \eqref{F1F2}.

\begin{lemma}\label{sub-super method}
\rm(i) Suppose that
$\bm{w}^+(t,x)$ and $\bm{w}^-(t,x)$
are super- and subsolutions of \eqref{F1F2} in
$(t,x)\in\mathbb R^+\times\mathbb R,$ respectively,
$\bm{0}\leq\bm{w}^\pm(t,x)\leq\bm{1}.$
If
$\bm{w}^-(0,x)\leq\bm{w}^+(0,x)$ for any $x\in\mathbb R,$
then
$\bm{w}^-(t,x)\leq\bm{w}^+(t,x)$
for any
$(t,x)\in\mathbb R^+\times\mathbb R.$

\rm(ii) For any $\bm{0}\leq\bm{w}_0\leq\bm{1}$ satisfying
$\bm{w}^-(0,\cdot)\leq\bm{w}_0(\cdot)\leq\bm{w}^+(0,\cdot)$ in $x\in\mathbb R,$
there hold
$\bm{w}^-(t,x)\leq\bm{w}(t,x;\bm{w}_0)\leq\bm{w}^+(t,x)$
and
$\bm{0}\leq\bm{w}(t,x;\bm{w}_0)\leq\bm{1}$
for any $(t,x)\in\mathbb R^+\times\mathbb R,$
where
$\bm{w}(t,x;\bm{w}_0)$ is the unique classical solution of
\eqref{F1F2} with initial value
$\bm{w}(0,x;\bm{w}_0)=\bm{w}_0.$
\end{lemma}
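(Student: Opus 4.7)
The plan is to exploit the cooperative structure of \eqref{F1F2} on the invariant region $[0,1]^2$ and adapt the standard comparison argument for cooperative parabolic systems. First I would verify cooperativity by direct computation:
\begin{equation*}
f_v(t,u,v)=a_1(t)p(t)\,u\,N_1(t)\geq 0\ \text{for}\ u\geq 0,\qquad
l_u(t,u,v)=b_2(t)q(t)(1-v)\,N_2(t)\geq 0\ \text{for}\ v\leq 1.
\end{equation*}
Thus \eqref{F1F2} is cooperative on the physical range $\bm{0}\leq\bm{w}\leq\bm{1}$; this is the structural input that makes a componentwise minimum principle available.

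For part (i), set $\bm{W}=\bm{w}^+-\bm{w}^-=(U,V)$. Subtracting the two differential inequalities and linearizing $f,l$ along the segment joining $\bm{w}^-(t,x)$ and $\bm{w}^+(t,x)$ via the mean value theorem yields
\begin{equation*}
\begin{cases}
U_t-U_{xx}-A_{11}(t,x)U-A_{12}(t,x)V\geq 0,\\
V_t-dV_{xx}-A_{21}(t,x)U-A_{22}(t,x)V\geq 0,
\end{cases}
\end{equation*}
where $A_{12},A_{21}\geq 0$ by cooperativity and all four coefficients are uniformly bounded on $\mathbb{R}^+\times\mathbb{R}$ since $\bm{w}^\pm\in[0,1]^2$ and $r_i,a_i,b_i,p,q$ are continuous and $T$-periodic. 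Choosing $K$ strictly larger than the sum of the $L^\infty$-norms of all four $A_{ij}$, the rescaling $\widetilde U=e^{-Kt}U$, $\widetilde V=e^{-Kt}V$ turns the system into one whose diagonal-minus-$K$ terms dominate the nonnegative off-diagonals, which is the standard setup for a minimum principle.

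The main obstacle is the unbounded spatial domain, which prevents one from simply extracting an interior infimum. I would resolve this by a growing barrier argument: fix an arbitrary horizon $T_0>0$ and $\varepsilon>0$, and introduce the perturbation $\varphi(t,x)=\varepsilon(1+x^2)e^{\gamma t}$ with $\gamma$ so large that
\begin{equation*}
\varphi_t-\varphi_{xx}-(A_{ii}-K)\varphi-A_{ij}\varphi>0 \quad\text{for }i\ne j
\end{equation*}
on $[0,T_0]\times\mathbb{R}$ (possible since $\gamma\varepsilon(1+x^2)e^{\gamma t}$ beats both $2\varepsilon e^{\gamma t}$ and the bounded zero-order terms times $\varphi$). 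Because $\widetilde U,\widetilde V$ are bounded while $\varphi\to+\infty$ as $|x|\to\infty$, any first touching time $t_0$ at which $\min_i\inf_x(\widetilde U_i+\varphi)=0$ must be interior to $(0,T_0]$ and attained at some $(t_0,x_0)$ with a definite index $i_0$. There the standard signs of $\partial_t$, $\partial_{xx}$, the vanishing of $\widetilde U_{i_0}+\varphi$, and the nonnegativity of the off-diagonal coupling applied to the other (still nonnegative) component contradict the strict inequality produced by $\varphi$. Sending $\varepsilon\to 0^+$ and then $T_0\to\infty$ yields $\bm W\geq\bm 0$.

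Part (ii) then follows from standard semilinear parabolic theory: since $f$ and $l$ are locally Lipschitz and $\bm{w}_0\in C_b(\mathbb{R},[0,1]^2)$, the Cauchy problem for \eqref{F1F2} admits a unique classical solution $\bm{w}(t,x;\bm{w}_0)$ on a maximal time interval. Observing that the constants $\bm{0}$ and $\bm{1}$ are equilibria of \eqref{F1F2} (hence both sub- and supersolutions), applying part (i) against these constants gives $\bm{0}\leq\bm{w}(\cdot,\cdot;\bm{w}_0)\leq\bm{1}$ on the existence interval, which rules out blow-up and extends the solution to all $t\geq 0$. Two further applications of (i) with the pairs $(\bm{w}^+,\bm{w}(\cdot,\cdot;\bm{w}_0))$ and $(\bm{w}(\cdot,\cdot;\bm{w}_0),\bm{w}^-)$ then give the sandwiching $\bm{w}^-\leq\bm{w}(\cdot,\cdot;\bm{w}_0)\leq\bm{w}^+$.
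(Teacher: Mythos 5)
Your argument is correct in substance, but it is worth noting that the paper does not actually prove this lemma: it simply states it ``by similar arguments as in [Theorem 2.2 and Corollary 2.3]'' of Wang's paper on monotone bistable systems (reference [38]), so what you have written out is essentially the standard proof that the citation is standing in for. Your part (i) is sound: the cooperativity computation $f_v=a_1pN_1u\geq0$ for $u\geq0$ and $l_u=b_2qN_2(1-v)\geq0$ for $v\leq1$ is exactly what makes the off-diagonal coefficients $A_{12},A_{21}$ nonnegative along the segment joining $\bm{w}^-$ and $\bm{w}^+$ (here you use the hypothesis $\bm{0}\leq\bm{w}^\pm\leq\bm{1}$, which also bounds all $A_{ij}$), and the $e^{-Kt}$ rescaling together with the unbounded barrier $\varepsilon(1+x^2)e^{\gamma t}$ is the usual Phragm\'en--Lindel\"of device for the whole line; the first-touching-point contradiction goes through as you describe.

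The one place you should tighten is part (ii). As written, you apply part (i) with $\bm{w}^+=\bm{w}(t,x;\bm{w}_0)$ (or $\bm{w}^-=\bm{w}(t,x;\bm{w}_0)$) and a constant equilibrium in order to conclude $\bm{0}\leq\bm{w}\leq\bm{1}$, but the hypotheses of part (i) require $\bm{0}\leq\bm{w}^\pm\leq\bm{1}$, which for the solution is precisely what you are trying to prove; moreover the cooperativity you rely on ($f_v\geq0$, $l_u\geq0$) is only available while the solution stays in the region $u\geq0$, $v\leq1$, so the argument is circular if the solution could leave $[0,1]^2$. This is a routine gap to close: either run a first-exit-time continuation argument (apply the comparison on $[0,t^*)$ where $t^*$ is the first time the solution leaves a slightly enlarged region and show $t^*=+\infty$), or replace $f,l$ by a globally cooperative, globally Lipschitz modification outside $[0,1]^2$ --- exactly the device the paper itself uses in the auxiliary system \eqref{auxi-sys} with the terms $L_1\{u\}^-v$ and $L_2\{1-v\}^-(u-1)$ --- prove comparison and invariance of $[0,1]^2$ for the modified system, and then observe that its solutions solve the original one. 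With that repair, the a priori bound does give global existence and the two further applications of (i) yield the sandwich $\bm{w}^-\leq\bm{w}(\cdot,\cdot;\bm{w}_0)\leq\bm{w}^+$ as you claim.
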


The subsequent lemma is a straightforward consequence of Theorems \ref{+infinity} and \ref{-infinity}.

\begin{lemma}\label{basic-estimate}
Let $(P(t,z),Q(t,z))$ be a traveling wave solution of \eqref{uv-trans-eq}
satisfying (C1).
Then there exist positive constants
$M, N, M_1, m_1, \delta_i$ and $\gamma_i~(i=1,2)$
such that
\begin{gather}
Q(t,z)\leq M P(t,z),
\quad t\in \mathbb R ,~z\leq 0 ,\label{QP-}\\
m_1e^{\nu_3z}\leq P(t,z)\leq M_1e^{\nu_3z},
\quad t\in \mathbb R ,~z \leq 0 ,\label{P-}\\
\delta_1P(t,z)\leq P_z(t,z)\leq\delta_2P(t,z),
\quad t\in \mathbb R ,~z \leq 0 ,\label{PPz-}\\
\gamma_1Q(t,z)\leq Q_z(t,z)\leq\gamma_2Q(t,z),
\quad t\in \mathbb R ,~z \leq 0 ,\label{QQz-}\\
1-Q(t,z)\leq N(1-P(t,z)),
\quad t\in \mathbb R ,~z \geq 0 ,\label{QP+}\\
\delta_1(1-P(t,z))\leq P_z(t,z),
\quad t\in \mathbb R ,~z \geq 0 ,\label{PPz+}\\
\gamma_1(1-Q(t,z))\leq Q_z(t,z),
\quad t\in \mathbb R ,~z \geq 0.\label{QQz+}
\end{gather}
\end{lemma}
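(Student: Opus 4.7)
The plan is to read off each inequality from the exact asymptotic expansions in Theorems \ref{+infinity} and \ref{-infinity}, combined with continuity/compactness on the bounded transition region in $z$, the periodicity of the profiles in $t$, the strict monotonicity $(P_z,Q_z)>(0,0)$ from Proposition~1.1, and the pointwise ratio bound (C1). The structure is the same for every inequality: the asymptotic formula fixes the ratio in question to a positive, bounded limit at one infinity, and compactness takes care of the remainder.

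First I would handle the estimates on $(-\infty,0]$. Theorem~\ref{-infinity} gives $P(t,z)/(k_3e^{\nu_3 z}\psi_1(t))\to 1$ and $P_z(t,z)/(k_3e^{\nu_3 z}\psi_1(t))\to \nu_3$ uniformly in $t\in\mathbb{R}$, and $\psi_1$ is a positive $T$-periodic function, so for some $Z_0<0$ we get \eqref{P-} on $(-\infty,Z_0]$ with $m_1,M_1$ close to $k_3\min\psi_1$ and $k_3\max\psi_1$, and $P_z(t,z)/P(t,z)\to\nu_3>0$ on $(-\infty,Z_0]$, which delivers \eqref{PPz-} there. The same argument, case by case ($\nu_4>\nu_3$, $\nu_4=\nu_3$, $\nu_4<\nu_3$), gives $Q_z/Q\to\nu_3$ or $\nu_4$, hence \eqref{QQz-} on $(-\infty,Z_0]$. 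To extend to $[Z_0,0]$ I use that $P,P_z,Q,Q_z$ are continuous, strictly positive, and $T$-periodic in $t$, so attain positive minima and finite maxima on the compact set $[0,T]\times[Z_0,0]$; taking the smallest/largest constants across the two regions finishes \eqref{P-}--\eqref{QQz-}. Inequality \eqref{QP-} is just (C1) rewritten with $M=1/\eta_0$.

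For the estimates on $[0,+\infty)$ I repeat the scheme using Theorem~\ref{+infinity}. That theorem says $1-Q(t,z)\sim k_1e^{\nu_2 z}\phi_2(t)$ and $Q_z(t,z)/(k_1e^{\nu_2 z}\phi_2(t))\to -\nu_2>0$, so $Q_z/(1-Q)\to -\nu_2>0$ uniformly in $t$, which gives \eqref{QQz+} on $[Z_1,+\infty)$ for some $Z_1>0$; exactly the same computation (with the three subcases $\nu_1<\nu_2$, $\nu_1=\nu_2$, $\nu_1>\nu_2$, the middle one producing the extra factor $|z|$ that still gives a positive lower bound on $P_z/(1-P)$ for $z$ large) yields \eqref{PPz+}. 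For \eqref{QP+}, the ratio $(1-Q)/(1-P)$ converges to the positive bounded limit $\phi_2(t)/\tilde\phi_1(t)$ when $\nu_1<\nu_2$, to $0$ (hence bounded) when $\nu_1=\nu_2$, so the ratio admits a finite supremum on $[Z_1,+\infty)$; on the compact slab $[0,T]\times[0,Z_1]$ the quantities $1-P,1-Q$ are continuous and strictly positive, so their ratio is bounded above and the two bounds can be combined to produce $N$. Extension of \eqref{PPz+} and \eqref{QQz+} to $[0,Z_1]$ proceeds as in the left half-line.

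No step is genuinely hard; the only place that requires a little care is \eqref{QP+}, which relies on the compatibility of the two decay rates at $+\infty$ coming from Theorem~\ref{+infinity} (so that $(1-Q)/(1-P)$ does not blow up). In practice one selects the constants $M,N,M_1,m_1,\delta_i,\gamma_i$ by taking the minima and maxima of the explicit limit ratios $\phi_i/\tilde\phi_j$, $\psi_i/\tilde\psi_j$ and the eigenvalues $\nu_i$ against the corresponding compact-set bounds, so that all seven inequalities hold globally on the prescribed half-lines.
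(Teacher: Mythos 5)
Your proposal is correct and matches the paper's approach: the paper states this lemma without proof as a straightforward consequence of Theorems \ref{+infinity} and \ref{-infinity}, and your argument is exactly that deduction, supplemented by (C1) for \eqref{QP-}, the positivity of $(P_z,Q_z)$, $T$-periodicity in $t$, and compactness on the bounded $z$-intervals. The only cosmetic omission is the subcase $\nu_1>\nu_2$ in \eqref{QP+}, where $(1-Q)/(1-P)\sim \bigl(k_1\phi_2/(k_2\phi_1)\bigr)e^{(\nu_2-\nu_1)z}\to 0$ as $z\to+\infty$, so it is handled exactly like your $\nu_1=\nu_2$ case.
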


The next two lemmas give some key estimates which we need to construct appreciate sub- and supersolutions later.

\begin{lemma}\label{H1-estimate}
Let
$(P(t,z),Q(t,z))$ be a solution of \eqref{PQ} satisfying (C1).
Denote
$$P_1=P(t,x+j_1), P_2=P(t,-x+j_2), Q_1=Q(t,x+j_1), Q_2=Q(t,-x+j_2),$$
$$H(t,x)=2P_{1,z}P_{2,z}+b_1q[P_1Q_2(1-P_2)(1-Q_1)+P_2Q_1(1-P_1)(1-Q_2)],$$
where $j_2\leq j_1\leq 0.$
Then there exists some $K_1>0$ such that
\begin{equation}\label{H1}
\frac{H(t,x)}{(1-P_2)P_{1,z}+(1-P_1)P_{2,z}}\leq K_1e^{\nu_3j_1}
\text{ for any } (t,x)\in \mathbb{R}\times\mathbb{R}.
\end{equation}
\end{lemma}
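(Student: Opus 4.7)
The plan is to split the real line in $x$ into three regions determined by the signs of the shifted arguments $z_1 := x + j_1$ and $z_2 := -x + j_2$. Since $z_1 + z_2 = j_1 + j_2 \le 0$, at most one of $z_1, z_2$ can be positive, so the three cases to examine are (I) $j_2 \le x \le -j_1$ with both $z_i \le 0$, (II) $x < j_2$ with $z_1 \le 0$ and $z_2 > 0$, and (III) $x > -j_1$ with $z_1 > 0$ and $z_2 \le 0$. In each region I will apply the matching side of Lemma~\ref{basic-estimate} to bound $H$ from above and $D(t,x) := (1-P_2)P_{1,z} + (1-P_1)P_{2,z}$ from below. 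A basic ingredient I use throughout is the uniform constant $c_0 := \inf_{t \in \mathbb{R}}(1 - P(t,0)) > 0$; by monotonicity of $P(t,\cdot)$ and $T$-periodicity in $t$ this gives $1 - P_i \ge c_0$ whenever $z_i \le 0$, and an analogous statement for $1 - Q_i$.

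In region (I) the $z \le 0$ estimates from Lemma~\ref{basic-estimate} yield $P_{i,z} \le \delta_2 P_i$, $Q_i \le M P_i$ and $P_i \le M_1 e^{\nu_3 z_i}$. Substituting into the definition of $H$ and using $1 - P_i, 1 - Q_i \le 1$ produces $H \le C_1 P_1 P_2$ with $C_1$ built from $\delta_2, M$ and $\sup_t b_1(t)q(t)$, while $P_{i,z} \ge \delta_1 P_i$ and $1 - P_i \ge c_0$ give $D \ge c_0 \delta_1 (P_1 + P_2) \ge 2 c_0 \delta_1 \sqrt{P_1 P_2}$. The ratio is then controlled by a multiple of $\sqrt{P_1 P_2} \le M_1 e^{\nu_3 (z_1 + z_2)/2} = M_1 e^{\nu_3 (j_1 + j_2)/2}$, and the hypothesis $j_2 \le j_1$ forces $(j_1 + j_2)/2 \le j_1$, delivering the required bound $K e^{\nu_3 j_1}$.

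In region (III) the $z_1 \ge 0$ estimates give $1 - Q_1 \le N(1 - P_1)$ and $\delta_1(1 - P_1) \le P_{1,z}$, while the $z_2 \le 0$ estimates apply to $P_2, Q_2$, yielding $Q_2 \le M P_2$ and $P_{2,z} \le \delta_2 P_2$. Pulling out the common factor $(1 - P_1)$ from each of the three summands of $H$ produces $H \le C_2 (1 - P_1) P_2$, while the denominator satisfies $D \ge (1 - P_2) P_{1,z} \ge c_0 \delta_1 (1 - P_1)$. Consequently $H/D \le C_3 P_2 \le C_3 M_1 e^{\nu_3(-x + j_2)}$, and the constraints $x > -j_1$ and $j_2 \le 0$ give $-x + j_2 \le j_1 + j_2 \le j_1$, closing the case. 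The mirror-image argument disposes of region (II), where the roles of $(P_1, Q_1)$ and $(P_2, Q_2)$ are swapped and one uses $x + j_1 \le j_2 + j_1 \le j_1$.

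The main obstacle I anticipate is controlling the cross term $2P_{1,z}P_{2,z}$ inside $H$ when one shift is positive: Lemma~\ref{basic-estimate} only records a lower bound for $P_z$ on $z \ge 0$, whereas the numerator estimates in regions (II) and (III) demand an inequality of the form $P_z(t,z) \le \widetilde{\delta}\,(1 - P(t,z))$ on $z \ge 0$ in order to extract the factor $(1 - P_1)$ (respectively $(1 - P_2)$). Such an upper bound follows from Theorem~\ref{+infinity}, which identifies the limit $P_z / (1 - P) \to -\nu_1$ or $-\nu_2$ as $z \to +\infty$, combined with parabolic regularity to dominate the continuous positive ratio on a compact initial interval; I would record this supplementary estimate as a short preliminary remark before running the case analysis above.
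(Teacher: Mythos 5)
Your argument is correct, and its skeleton (split $\mathbb{R}$ according to the signs of $x+j_1$ and $-x+j_2$, then invoke Lemma \ref{basic-estimate} region by region and use $j_2\leq j_1\leq 0$, $\nu_3>0$ to convert the resulting exponentials into $e^{\nu_3 j_1}$) is the same as the paper's, but the two proofs diverge at the key technical steps. In the interior region you bound the denominator by $c_0\delta_1(P_1+P_2)\geq 2c_0\delta_1\sqrt{P_1P_2}$ and use $\sqrt{P_1P_2}\leq M_1e^{\nu_3(j_1+j_2)/2}\leq M_1e^{\nu_3 j_1}$, which merges the paper's Cases A and B into one computation; the paper instead keeps a single denominator term chosen according to the sign of $x$ ($(1-P_1)P_{2,z}$ for $j_2\leq x\leq 0$, $(1-P_2)P_{1,z}$ for $0\leq x\leq -j_1$) and uses only \eqref{QP-}, \eqref{P-}, \eqref{PPz-}. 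The more substantive difference is in the outer regions, e.g. $x\geq -j_1$: you estimate numerator and denominator separately, and to extract the factor $(1-P_1)$ from the cross term $2P_{1,z}P_{2,z}$ you need the supplementary bound $P_z(t,z)\leq\widetilde\delta\,(1-P(t,z))$ on $z\geq 0$, which is \emph{not} contained in Lemma \ref{basic-estimate}; you correctly flag this, and it is indeed valid (by Theorem \ref{+infinity} the ratio $P_z/(1-P)$ converges uniformly in $t$ to $-\nu_2$ or $-\nu_1$ as $z\to+\infty$, and on a compact $z$-interval it is a continuous positive $T$-periodic-in-$t$ function, hence bounded), so your route closes, but it needs this extra lemma-level input. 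The paper avoids it entirely by dividing the whole numerator by the denominator term $(1-P_2)P_{1,z}$, so that $P_{1,z}$ cancels in the cross term, leaving $2P_{2,z}/(1-P_2)$, which is controlled by the $z\leq 0$ estimates \eqref{PPz-} already recorded; the remaining terms are handled exactly as you do, via \eqref{QP+} and \eqref{PPz+}. In short: your proof is valid once you add the (easily justified) upper bound on $P_z/(1-P)$ for $z\geq 0$, the AM--GM treatment of the interior region is a mild simplification, while the paper's cancellation trick buys economy by staying strictly within the estimates of Lemma \ref{basic-estimate}.
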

\begin{proof}
We divide $x\in \mathbb R$ into four intervals.
\begin{itemize}
\setlength{\itemindent}{1.4em}
\item[Case A:] $j_2\leq x\leq 0.$ Then $x+j_1\leq 0$ and $-x+j_2\leq 0.$
By \eqref{QP-}, \eqref{P-} and \eqref{PPz-},
\begin{align*}
&\frac{H(t,x)}{(1-P_2)P_{1,z}+(1-P_1)P_{2,z}}
\leq\frac{2P_{1,z}P_{2,z}+b_1q(P_1Q_2+P_2Q_1)}{(1-P_1)P_{2,z}}\\
&\leq\frac{2P_{1,z}}{1-P_1}
+b_1q\left[\frac{P_1MP_2}{(1-P_1)\delta_1P_2}+\frac{P_2MP_1}{(1-P_1)\delta_1P_2}\right]\\
&\leq\frac{2\delta_2M_1e^{\nu_3(x+j_1)}}{1-P_1(t,0)}
+b_1q\left[\frac{MM_1e^{\nu_3(x+j_1)}}{(1-P_1(t,0))\delta_1}
+\frac{MM_1e^{\nu_3(x+j_1)}}{(1-P_1(t,0))\delta_1}\right]\\
&\leq\left[\frac{2\delta_2M_1}{1-P_1(t,0)}+
\mathop{\max}\limits_{t\in[0,T]}(b_1q)
\frac{2MM_1}{(1-P_1(t,0))\delta_1}\right]e^{\nu_3j_1},
~t\in\mathbb R.
\end{align*}
\item[Case B:] $0\leq x\leq -j_1.$
Then $x+j_1\leq 0$ and $-x+j_2\leq0.$
Similar to Case A, there holds
\begin{align*}
\hspace{-2em}
\frac{H(t,x)}{(1-P_2)P_{1,z}+(1-P_1)P_{2,z}}
\leq\left[\frac{2\delta_2M_1}{1-P_2(t,0)}+
\mathop{\max}\limits_{t\in[0,T]}(b_1q)
\frac{2MM_1}{(1-P_2(t,0))\delta_1}\right]e^{\nu_3j_1},
~t\in\mathbb R.
\end{align*}
\item[Case C:] $x\geq -j_1.$
Then $x+j_1\geq 0$ and $-x+j_2\leq 0.$
By \eqref{QP-}, \eqref{P-}, \eqref{PPz-}, \eqref{QP+} and \eqref{PPz+},
\hspace{-1.2em}
\begin{align*}
&\frac{H(t,x)}{(1-P_2)P_{1,z}+(1-P_1)P_{2,z}}
\leq\frac{2P_{1,z}P_{2,z}+b_1q[Q_2(1-Q_1)+P_2(1-P_1)]}{(1-P_2)P_{1,z}}\\
&\leq\frac{2\delta_2M_1e^{\nu_3(-x+j_2)}}{1-P_2(t,0)}
+b_1q\left[\frac{MM_1e^{\nu_3(-x+j_2)}N(1-P_1)}{(1-P_2(t,0))\delta_1(1-P_1)}
+\frac{M_1e^{\nu_3(-x+j_2)}(1-P_1)}{(1-P_2(t,0))\delta_1(1-P_1)}\right] \\
&\leq\left[\frac{2\delta_2M_1}{1-P_2(t,0)}+\mathop{\max}\limits_{t\in[0,T]}(b_1q)
\frac{M_1(MN+1)}{(1-P_2(t,0))\delta_1}\right]e^{\nu_3j_1},
~t\in\mathbb R.
\end{align*}
\item[Case D:] $x \leq j_2.$
Then $x+j_1\leq 0$ and $-x+j_2\geq 0.$
Similar to Case C,
\begin{equation*}
\hspace{-2.5em}
\frac{H(t,x)}{(1-P_2)P_{1,z}+(1-P_1)P_{2,z}}
\leq\left[\frac{2\delta_2M_1}{1-P_1(t,0)}+\mathop{\max}\limits_{t\in[0,T]}(b_1q)
\frac{M_1(MN+1)}{(1-P_1(t,0))\delta_1}\right]e^{\nu_3j_1},
~t\in \mathbb R.
\end{equation*}
\end{itemize}

Let
$$
K_1=M_1\mathop{\max}\limits_{\begin{subarray}{l}i=1,2\\t\in[0,T]\end{subarray}}
\left\{{\frac{2\delta_2}{1-P_i(t,0)}+\frac{2C_2^+M}{(1-P_i(t,0))\delta_1},
\frac{2\delta_2}{1-P_i(t,0)}+\frac{C_2^+(MN+1)}{(1-P_i(t,0))\delta_1}}\right\},
$$
where $C_2^+=\mathop{\max}\limits_{t\in[0,T]}b_1(t)q(t).$
Then \eqref{H1} holds and we complete the proof.
\end{proof}

\begin{lemma}\label{H2-estimate}
Let $(P(t,z),Q(t,z))$ be a solution of \eqref{PQ} satisfying (C1).
Denote
$$
Q_1=Q(t,x+j_1), Q_2=Q(t,-x+j_2), \tilde{H}(t,x)=2dQ_{1,z}Q_{2,z}+b_2qQ_1Q_2(1-Q_1)(1-Q_2),
$$
where $j_2\leq j_1\leq 0.$
Then there exists some $K_2>0$ such that
\begin{equation}\label{H2}
\frac{\tilde{H}(t,x)}{(1-Q_2)Q_{1,z}+(1-Q_1)Q_{2,z}}\leq K_2e^{\nu_3j_1}
\text{ for any } (t,x)\in \mathbb{R}\times\mathbb{R}.
\end{equation}
\end{lemma}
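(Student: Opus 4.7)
The plan is to mirror the four-case argument used for Lemma \ref{H1-estimate}, replacing the $P$-estimates from Lemma \ref{basic-estimate} by their $Q$-counterparts. Split $\mathbb{R}$ according to the signs of $x+j_1$ and $-x+j_2$ into the four intervals $[j_2,0]$, $[0,-j_1]$, $[-j_1,+\infty)$ and $(-\infty,j_2]$ (Cases A--D), establish \eqref{H2} with a separate constant in each, then take $K_2$ to be the maximum of these constants.

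In Cases A and B both arguments of $Q$ are nonpositive. Using \eqref{QQz-} I would bound the numerator by $\tilde{H} \leq \bigl(2d\gamma_2^2 + \mathop{\max}\limits_{[0,T]}b_2q\bigr) Q_1 Q_2$, after dropping the trivial factor $(1-Q_1)(1-Q_2) \leq 1$, and bound the denominator from below by discarding one of the two nonnegative summands and applying $Q_z \geq \gamma_1 Q$. After the $Q_1$ or $Q_2$ cancellation the remaining ratio is a constant multiple of $Q(t,\cdot)$ evaluated at a point $\leq j_1$ (namely $x+j_1$ in Case A since $x \leq 0$, and $-x+j_2$ in Case B since $x \geq 0$ and $j_2 \leq j_1$); combining \eqref{QP-} with \eqref{P-} then yields $Q \leq MM_1 e^{\nu_3 j_1}$. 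The factor $1 - Q_i(t,0)$ appearing in the denominator is uniformly positive: $Q$ is $T$-periodic and continuous in $t$ with $Q(t,0) < 1$ (because $Q < 1$ everywhere and $Q_z > 0$), so $\mathop{\min}\limits_{t\in[0,T]}(1-Q(t,0)) > 0$.

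In Case C ($x+j_1 \geq 0$ and $-x+j_2 \leq 0$) I would retain only $(1-Q_2)Q_{1,z}$ in the denominator and use \eqref{QQz+} to bound $Q_{1,z} \geq \gamma_1(1-Q_1)$; this cancels the $(1-Q_1)$ sitting in the cross-term $b_2 q Q_1 Q_2 (1-Q_1)(1-Q_2)$ of $\tilde{H}$. Together with $Q_1 \leq 1$, $1 - Q_2 \leq 1$, and $Q_{2,z} \leq \gamma_2 Q_2$ from \eqref{QQz-}, the ratio reduces to a constant times $Q_2 = Q(t,-x+j_2)$. The condition $x \geq -j_1$ gives $-x+j_2 \leq j_1 + j_2 \leq j_1$, so \eqref{QP-} and \eqref{P-} again supply the $e^{\nu_3 j_1}$ factor. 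Case D is completely symmetric: keep $(1-Q_1)Q_{2,z}$ in the denominator, cancel $(1-Q_2)$ against $Q_{2,z}$ via \eqref{QQz+}, and use $Q_1 = Q(t,x+j_1)$ with $x+j_1 \leq j_2 + j_1 \leq j_1$.

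I do not expect a serious obstacle: the proof is essentially a line-by-line analogue of Lemma \ref{H1-estimate} with the substitutions $P \mapsto Q$, $(\delta_1,\delta_2,M_1) \mapsto (\gamma_1,\gamma_2,MM_1)$, $b_1 \mapsto b_2$, and the extra factor $d$ from $2dQ_{1,z}Q_{2,z}$. The only mildly delicate bookkeeping points are the uniform positivity of $1 - Q(t,0)$ noted above and the case-by-case identification of the ``small'' exponent (always one of $x+j_1$ or $-x+j_2$ that can be majorized by $j_1$); neither introduces any new difficulty beyond those already handled in Lemma \ref{H1-estimate}.
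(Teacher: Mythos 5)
Your proposal is correct and follows essentially the same route as the paper's proof: the same four-case splitting according to the signs of $x+j_1$ and $-x+j_2$, the same use of \eqref{QP-}, \eqref{P-}, \eqref{QQz-} and \eqref{QQz+} to cancel the large factor and extract $e^{\nu_3 j_1}$ from whichever argument is $\leq j_1$, and the same uniform lower bound on $1-Q(t,0)$. The only differences are bookkeeping (e.g.\ bounding both numerator terms against a single denominator summand, yielding $\gamma_2^2$ instead of $\gamma_2$ in a constant), which do not affect the argument.
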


\begin{proof}
We divide $x\in \mathbb R$ into four intervals.
\begin{itemize}
\setlength{\itemindent}{1.3em}
\item[Case A:] $j_2\leq x\leq 0.$
Then $x+j_1\leq 0$ and $-x+j_2\leq 0.$
By \eqref{QP-}, \eqref{P-} and \eqref{QQz-},
\hspace{-2em}
\begin{align*}
&\frac{\tilde{H}(t,x)}{(1-Q_2)Q_{1,z}+(1-Q_1)Q_{2,z}}
\leq\frac{2dQ_{1,z}}{1-Q_1}+\frac{b_2qQ_1Q_2}{Q_{2,z}}\\
&\leq\frac{2d\gamma_2MM_1e^{\nu_3(x+j_1)}}{1-Q_1(t,0)}
+b_2q\frac{MM_1e^{\nu_3(x+j_1)}Q_2}{\gamma_1Q_2}\\
&\leq\left[\frac{2d\gamma_2MM_1}{1-Q_1(t,0)}
+\mathop{\max}\limits_{t\in[0,T]}(b_2q)\frac{MM_1}{\gamma_1}\right]e^{\nu_3j_1},
~t\in \mathbb R.
\end{align*}
\item[Case B:] $0\leq x\leq -j_1.$
Then $x+j_1\leq 0$ and $-x+j_2\leq 0.$
Similar to Case A,
\hspace{-3em}
\begin{align*}
\frac{\tilde{H}(t,x)}{(1-Q_2)Q_{1,z}+(1-Q_1)Q_{2,z}}
\leq\left({\frac{2d\gamma_2MM_1}{1-Q_2(t,0)}
+\mathop{\max}\limits_{t\in[0,T]}(b_2q)\frac{MM_1}{\gamma_1}}\right)e^{\nu_3j_1},
~t\in \mathbb R.
\end{align*}
\item[Case C:] $x\geq -j_1.$
Then $x+j_1\geq 0$ and $-x+j_2\leq 0.$
By \eqref{QP-}, \eqref{P-}, \eqref{QQz-} and \eqref{QQz+},
\begin{align*}
&\frac{\tilde{H}(t,x)}{(1-Q_2)Q_{1,z}+(1-Q_1)Q_{2,z}}
\leq\frac{2dQ_{2,z}}{1-Q_2}+b_2q\frac{Q_2(1-Q_1)}{Q_{1,z}}\\
&\leq\frac{2d\gamma_2MM_1e^{\nu_3(-x+j_2)}}{1-Q_2(t,0)}
+b_2q\frac{MM_1e^{\nu_3(-x+j_2)}(1-Q_1)}{\gamma_1(1-Q_1)}\\
&\leq\left[{\frac{2d\gamma_2MM_1}{1-Q_2(t,0)}
+\mathop{\max}\limits_{t\in[0,T]}(b_2q)\frac{MM_1}{\gamma_1}}\right]e^{\nu_3j_1},
~t\in \mathbb R.
\end{align*}
\item[Case D:] $x\leq j_2.$
Then $x+j_1\leq 0$ and $-x+j_2\geq 0.$
Similar to Case C,
\begin{equation*}
\hspace{-4em}
\frac{\tilde{\tilde{H}}(t,x)}{(1-Q_2)Q_{1,z}+(1-Q_1)Q_{2,z}}
\leq\left[{\frac{2d\gamma_2MM_1}{1-Q_1(t,0)}
+\mathop{\max}\limits_{t\in[0,T]}(b_2q)\frac{MM_1}{\gamma_1}}\right]e^{\nu_3j_1},
~t\in \mathbb R.
\end{equation*}
\end{itemize}

Let
$K_2=MM_1\mathop{\max}\limits_{\begin{subarray}{l}i=1,2\\t\in [0,T]\end{subarray}}
\left\{{\frac{2d\gamma_2}{1-Q_i(t,0)}+\frac{C_1^-}{\gamma_1}}\right\}$
with $C_1^-=\mathop{\max}\limits_{t\in[0,T]}b_2(t)q(t).$
Then \eqref{H2} holds and the proof is complete.
\end{proof}

In order to construct a supersolution of \eqref{F1F2},
we first introduce two functions $j_1(t)$ and $j_2(t).$
Let $K=\max\{K_1,K_2\},$ where $K_1$ and $K_2$ are defined
in Lemmas \ref{H1-estimate} and \ref{H2-estimate}, respectively.
Suppose that $c<0,$ for any $\varrho_1\in(-\infty,0],$ denote
$$
\omega_1(\varrho_1)=\varrho_1-\frac{1}{\nu_3}
\ln\left({1-\frac{K}{c}e^{\nu_3\varrho_1}}\right),
~\varpi=\omega_1(0)<0.
$$
Since $\omega_1(\varrho_1)$ is increasing in $\varrho_1\in(-\infty,0],$ we denote
its inverse function by $\varrho_1=\varrho_1(\omega_1):(-\infty,\varpi]\to(-\infty,0].$
For any $(\omega_1,\omega_2)\in(-\infty,\varpi]^2,$ define
$$
\varrho_2(\omega_1,\omega_2)=\omega_2+\frac{1}{\nu_3}
\ln\left({1-\frac{K}{c}e^{\nu_3 \varrho_1(\omega_1)}}\right).
$$
Consider the following system
\begin{equation}\label{j}
\begin{cases}
\tilde{j_1}^\prime(t;\omega_1)=-c+Ke^{\nu_3\tilde{j_1}(t;\omega_1)},~~t<0,\\
\tilde{j_2}^\prime(t;\omega_1,\omega_2)=-c+Ke^{\nu_3\tilde{j_1}(t;\omega_1)},~~t<0,\\
\tilde{j_1}(0;\omega_1)=\varrho_1(\omega_1),\\
\tilde{j_2}(0;\omega_1,\omega_2)=\varrho_2(\omega_1,\omega_2).
\end{cases}
\end{equation}
Solving \eqref{j} explicitly, we have
\begin{equation*}
\begin{cases}
\tilde{j_1}(t;\omega_1)=\varrho_1(\omega_1)-ct-\frac{1}{\nu_3}\ln\left\{{1-\frac{K}{c}e^{\nu_3 \varrho_1(\omega_1)}(1-e^{-c\nu_3 t})}\right\},\quad t\leq0,\\
\tilde{j_2}(t;\omega_1,\omega_2)=\varrho_2(\omega_1,\omega_2)
-ct-\frac{1}{\nu_3}\ln\left\{{1-\frac{K}{c}e^{\nu_3 \varrho_1(\omega_1)}(1-e^{-c\nu_3 t})}\right\},~t\leq0.
\end{cases}
\end{equation*}
It is easy to see that
$$
\tilde{j_2}(t;\omega_1,\omega_2)-\tilde{j_1}(t;\omega_1)
=\varrho_2(\omega_1,\omega_2)-\varrho_1(\omega_1)=\omega_2-\omega_1,
$$
$$
\tilde{j_1}(t;\omega_1)-(-ct+\omega_1)=
\tilde{j_2}(t;\omega_1,\omega_2)-(-ct+\omega_2)=
-\frac{1}{\nu_3}\ln\left({1-\frac{\varsigma}{1+\varsigma}e^{-c\nu_3 t}}\right)
$$
for $t\leq0,$ where
$\varsigma=-\frac{K}{c}e^{\nu_3\varrho_1(\omega_1)}.$
Moreover,
for any $(\omega_1,\omega_2)\in(-\infty,\varpi]^2,$
let
$$j_1(t)=j_1(t;\omega_1,\omega_2):=\tilde{j_1}(t;\omega_1),
~~j_2(t)=j_2(t;\omega_1,\omega_2):=\tilde{j_2}(t;\omega_1,\omega_2),
\text{ if }\omega_2\leq\omega_1;$$
$$j_1(t)=j_1(t;\omega_1,\omega_2):=\tilde{j_2}(t;\omega_1,\omega_2),
~~j_2(t)=j_2(t;\omega_1,\omega_2):=\tilde{j_1}(t;\omega_1),
\text{ if }\omega_1\leq\omega_2.$$
Then $j_2(t)\leq j_1(t)\leq0$ if $\omega_2\leq\omega_1,$ and
$j_1(t)\leq j_2(t)\leq0$ if $\omega_1\leq\omega_2.$
Therefore, there is a constant $R_0>0$ such that
\begin{equation}
0<j_1(t)-(-ct+\omega_1)=j_2(t)-(-ct+\omega_2)\leq R_0e^{-c\nu_3t},~t\leq0. \label{100}
\end{equation}

\begin{lemma}\label{super-solu}
Let $\bm\Phi(t,z)=(P(t,z),Q(t,z))$ be a traveling wave solution of \eqref{uv-trans-eq}
satisfying (C1) with $c<0.$
Then for any $(\omega_1,\omega_2)\in(-\infty,\varpi]^2,$
the pair $\bm{\overline{W}}=(\overline{U},\overline{V})$ defined by
\begin{equation*}
\bm{\overline{W}}(t,x):=\bm\Phi(t,x+j_1(t))+\bm\Phi(t,-x+j_2(t))
-\bm\Phi(t,x+j_1(t))\bm\Phi(t,-x+j_2(t))
\end{equation*}
is a supersolution of \eqref{F1F2} in
$(t,x)\in(-\infty,0]\times\mathbb R.$
\end{lemma}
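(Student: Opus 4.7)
The plan is to verify directly that $\bm{\overline{W}}(t,x)=(\overline{U},\overline{V})$ satisfies $\mathcal{F}_1(t,\overline{U},\overline{V})\ge 0$ and $\mathcal{F}_2(t,\overline{U},\overline{V})\ge 0$ pointwise on $(-\infty,0]\times\mathbb{R}$, so that Lemma \ref{sub-super method} qualifies it as a supersolution. I would open by introducing the abbreviations $P_i,Q_i$ ($i=1,2$) for the shifted wave profiles as in Lemmas \ref{H1-estimate}–\ref{H2-estimate}, exploiting the product identities $1-\overline{U}=(1-P_1)(1-P_2)$ and $1-\overline{V}=(1-Q_1)(1-Q_2)$. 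By the symmetry of the construction preceding the lemma under $\omega_1\leftrightarrow\omega_2$ combined with $x\mapsto -x$, one may assume $\omega_1\ge\omega_2$, so that $j_1(t)=\tilde{j}_1(t)$ and $j_2(t)=\tilde{j}_2(t)$, and in particular $j_2(t)\le j_1(t)\le 0$ on $t\le 0$, exactly the parameter range in which Lemmas \ref{H1-estimate} and \ref{H2-estimate} apply.

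Next I would compute $\partial_t\overline{U}$, $\partial_x\overline{U}$, $\partial_{xx}\overline{U}$ via the chain rule, noting that $\partial_x P_2=-P_{2,z}$ due to the spatial reflection and that each $\partial_t$ produces a $j_i'(t)$ factor. Substituting the traveling wave equation \eqref{PQ} to eliminate $P_{i,zz},Q_{i,zz}$ and grouping, I expect $\mathcal{F}_1$ to collapse into
\begin{equation*}
(j_1'+c)P_{1,z}(1-P_2)+(j_2'+c)P_{2,z}(1-P_1)-H(t,x)+a_1p\,P_1P_2(1-P_1)(1-P_2)+b_1q\,P_1P_2(1-Q_1)(1-Q_2),
\end{equation*}
with $H$ as in Lemma \ref{H1-estimate}. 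The crucial algebraic step is the identity $(1-Q_1)+(1-Q_2)-(1-Q_1)(1-Q_2)=1-Q_1Q_2$, which allows the $b_1q$ cross pieces arising from $\overline{U}(1-\overline{V})-P_1(1-Q_1)(1-P_2)-P_2(1-Q_2)(1-P_1)$ to reorganize cleanly into $-\bigl[P_1Q_2(1-P_2)(1-Q_1)+P_2Q_1(1-P_1)(1-Q_2)\bigr]+P_1P_2(1-Q_1)(1-Q_2)$, producing the $-H$ term plus a manifestly non-negative residual. An entirely parallel computation gives
\begin{equation*}
\mathcal{F}_2(\overline{U},\overline{V})=(j_1'+c)Q_{1,z}(1-Q_2)+(j_2'+c)Q_{2,z}(1-Q_1)-\tilde{H}(t,x)+a_2p\,P_1P_2(1-Q_1)(1-Q_2),
\end{equation*}
with $\tilde{H}$ as in Lemma \ref{H2-estimate} and non-negative residual.

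The conclusion is then nearly immediate. From the ODE system \eqref{j} together with the choice $j_1=\tilde{j}_1$, both $j_1'(t)+c$ and $j_2'(t)+c$ are equal to $Ke^{\nu_3 j_1(t)}$, where $K=\max\{K_1,K_2\}$. Plugging this into the two displays above and invoking the ratio bounds of Lemmas \ref{H1-estimate} and \ref{H2-estimate} yields
\begin{equation*}
\mathcal{F}_i(\overline{U},\overline{V})\ge (K-K_i)\,e^{\nu_3 j_1(t)}\bigl[(1-W_2)W_{1,z}+(1-W_1)W_{2,z}\bigr]\ge 0\quad(i=1,2),
\end{equation*}
where $(W_1,W_2)=(P_1,P_2)$ for $i=1$ and $(W_1,W_2)=(Q_1,Q_2)$ for $i=2$. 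The main obstacle I foresee is the bookkeeping inside the expansion of $\mathcal{F}_1$: the nonlinear dependence of $\overline{U}$ and $\overline{V}$ on $P_i,Q_i$, combined with the need to match the specific combination of cross terms with the tailor-made expression $H$ of Lemma \ref{H1-estimate}, leaves essentially no slack, so the identity above must be verified exactly rather than up to inequalities. Once that reorganization is carried out, the shape of the ODE \eqref{j} and the definition $K=\max\{K_1,K_2\}$ appear as the unique choice that closes the argument, and the whole proof reduces to the two pointwise ratio estimates \eqref{H1} and \eqref{H2}.
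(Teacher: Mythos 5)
Your proposal is correct and follows essentially the same route as the paper: assume $\omega_2\le\omega_1$, expand $\mathcal{F}_1,\mathcal{F}_2$ using the traveling wave equations and $j_i'(t)+c=Ke^{\nu_3 j_1(t)}$, and bound the resulting cross terms by $H$ and $\tilde H$ via Lemmas \ref{H1-estimate} and \ref{H2-estimate}. Your explicit displays for $\mathcal{F}_1$ and $\mathcal{F}_2$ (with the non-negative residuals $a_1p\,P_1P_2(1-P_1)(1-P_2)+b_1q\,P_1P_2(1-Q_1)(1-Q_2)$ and $a_2p\,P_1P_2(1-Q_1)(1-Q_2)$) are exactly equivalent to the paper's statement that $H_1\le H$ and $H_2\le\tilde H$, so the argument closes as in the paper.
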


\begin{proof}
Without loss of generality, we suppose that $\omega_2\leq\omega_1,$
then $j_2(t)\leq j_1(t)\leq 0$ and
$j_i^\prime(t)=-c+Ke^{\nu_3j_1(t)}~(i=1,2)$ for $t\leq 0.$
Denote
$$
P_1=P_1(t,z)=P(t,x+j_1(t)) \text{ and } P_2=P_2(t,z)=P(t,-x+j_2(t)),
$$
then a direct calculation gives
\begin{equation*}
\begin{aligned}
\mathcal{F}_1(t,\overline U,\overline V)
=&(P_{1,t}-cP_{1,z}-P_{1,zz})(1-P_2)+(P_{2,t}-cP_{2,z}-P_{2,zz})(1-P_1)\\
&+Ke^{\nu_3 j_1(t)}\left[(1-P_2)P_{1,z}+(1-P_1)P_{2,z}\right]-2P_{1,z}P_{2,z}\\
&-f(t,P_1+P_2-P_1P_2,Q_1+Q_2-Q_1Q_2)\\
=&Ke^{\nu_3 j_1(t)}\left[(1-P_2)P_{1,z}+(1-P_1)P_{2,z}\right]-2P_{1,z}P_{2,z}\\
&+(1-P_2)f(t,P_1,Q_1)+(1-P_1)f(t,P_2,Q_2)\\
&-f(t,P_1+P_2-P_1P_2,Q_1+Q_2-Q_1Q_2)\\
:=&A(t,x)Ke^{\nu_3 j_1(t)}-H_1(t,x),
\end{aligned}
\end{equation*}
where
$A(t,x)=(1-P_2)P_{1,z}+(1-P_1)P_{2,z}>0$
and
\begin{equation*}
\begin{aligned}
H_1(t,x)=&2P_{1,z}P_{2,z}+f(t,P_1+P_2-P_1P_2,Q_1+Q_2-Q_1Q_2)\\
&-(1-P_2)f(t,P_1,Q_1)-(1-P_1)f(t,P_2,Q_2).
\end{aligned}
\end{equation*}
Note that
\begin{equation*}
\begin{aligned}
&f(t,P_1+P_2-P_1P_2,Q_1+Q_2-Q_1Q_2)
-(1-P_2)f(t,P_1,Q_1)-(1-P_1)f(t,P_2,Q_2)\\
&=a_1p(P_1+P_2-P_1P_2)\left[(1-P_1)(1-P_2)-N_1(1-Q_1)(1-Q_2)\right]\\
&\quad-a_1p\left[P_1(1-P_2)(1-P_1-N_1(1-Q_1))+P_2(1-P_1)(1-P_2-N_1(1-Q_2))\right]\\
&=a_1p\left[-P_1P_2(1-P_1)(1-P_2)+N_1P_1Q_2(1-P_2)(1-Q_1)\right .\\
&\quad+\left. N_1P_2Q_1(1-P_1)(1-Q_2)-N_1P_1P_2(1-Q_1)(1-Q_2) \right]\\
&\leq b_1q\left[P_1Q_2(1-P_2)(1-Q_1)+P_2Q_1(1-P_1)(1-Q_2)\right],
\end{aligned}
\end{equation*}
it follows that $H_1(t,x)\leq H(t,x).$
By Lemma \ref{H1-estimate}, there hold
$$
\frac{H_1(t,x)}{A(t,x)}\leq \frac{H(t,x)}{A(t,x)}\leq Ke^{\nu_3 j_1(t)}
\text{ for any } (t,x)\in(-\infty,0]\times\mathbb{R},
$$
and hence
$\mathcal{F}_1(t,\overline U,\overline V)\geq 0$
for any $(t,x)\in (-\infty,0]\times \mathbb{R}.$

We now prove $\mathcal{F}_2(t,\overline U,\overline V)\geq 0.$ Note that
\begin{equation*}
\begin{aligned}
\mathcal{F}_2(t,\overline U,\overline V)
=&(Q_{1,t}-cQ_{1,z}-dQ_{1,zz})(1-Q_2)+(Q_{2,t}-cQ_{2,z}-dQ_{2,zz})(1-Q_1)\\
&-2dQ_{1,z}Q_{2,z}+Ke^{\nu_3 j_1(t)}\left[(1-Q_2)Q_{1,z}+(1-Q_1)Q_{2,z}\right]\\
&-l(t,P_1+P_2-P_1P_2,Q_1+Q_2-Q_1Q_2)\\
=&Ke^{\nu_3 j_1(t)}[(1-Q_2)Q_{1,z}+(1-Q_1)Q_{2,z}]-2dQ_{1,z}Q_{2,z}+(1-Q_1)l(t,P_2,Q_2)\\
&+(1-Q_2)l(t,P_1,Q_1)-l(t,P_1+P_2-P_1P_2,Q_1+Q_2-Q_1Q_2)\\
:=&B(t,x)Ke^{\nu_3 j_1(t)}-H_2(t,x),
\end{aligned}
\end{equation*}
where $B(t,x)=(1-Q_2)Q_{1,z}+(1-Q_1)Q_{2,z}>0$ and
\begin{equation*}
\begin{aligned}
H_2(t,x)=&2dQ_{1,z}Q_{2,z}+l(t,P_1+P_2-P_1P_2,Q_1+Q_2-Q_1Q_2)\\
&-(1-Q_1)l(t,P_2,Q_2)-(1-Q_2)l(t,P_1,Q_1).
\end{aligned}
\end{equation*}
Since
\begin{equation*}
\begin{aligned}
&l(t,P_1+P_2-P_1P_2,Q_1+Q_2-Q_1Q_2)-(1-Q_1)l(t,P_2,Q_2)-(1-Q_2)l(t,P_1,Q_1)\\
&=b_2q\left[(1-Q_1)(1-Q_2)\left[N_2(P_1+P_2-P_1P_2)-(Q_1+Q_2-Q_1Q_2)\right]\right]\\
&\quad-b_2q\left[(1-Q_1)(1-Q_2)(N_2P_1-Q_1)+(1-Q_1)(1-Q_2)(N_2P_2-Q_2)\right]\\
&=b_2q\left[(1-Q_1)(1-Q_2)(Q_1Q_2-N_2P_1P_2)\right]\\
&\leq b_2qQ_1Q_2(1-Q_1)(1-Q_2),
\end{aligned}
\end{equation*}
it follows that $H_2(t,x)\leq \tilde{H}(t,x).$
By Lemma \ref{H2-estimate}, there hold
$$
\frac{H_2(t,x)}{B(t,x)}\leq \frac{\tilde{H}(t,x)}{B(t,x)}\leq Ke^{\nu_3 j_1(t)}
\text{ for any }(t,x)\in (-\infty,0]\times \mathbb{R}.
$$
Therefore,
$\mathcal{F}_2(t,\overline U,\overline V)\geq 0$
for any $(t,x)\in (-\infty,0]\times\mathbb{R}.$ This ends the proof.
\end{proof}

A subsolution of \eqref{F1F2} can be easily obtained as follows.

\begin{lemma}\label{subsolution}
Let $\bm\Phi(t,z)=(P(t,z),Q(t,z))$ be a traveling wave solution of \eqref{uv-trans-eq}
satisfying (C1) with $c<0.$
Then for any $(\omega_1,\omega_2)\in(-\infty,\varpi]^2,$
the pair $\bm{\underline{w}}=(\underline{u},\underline{v})$ defined by
\begin{equation*}
\bm{\underline{w}}(t,x)=\bm{\underline{w}}(t,x;\omega_1,\omega_2)
=\mathop{\max}\left\{\bm\Phi(t,x-ct+\omega_1),\bm\Phi(t,-x-ct+\omega_2)\right\}
\end{equation*}
is a subsolution of \eqref{F1F2} in $(t,x)\in\mathbb R\times\mathbb R$ in the sense of distribution.
\end{lemma}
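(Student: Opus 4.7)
The plan is to combine two facts: each shifted wave is itself an exact solution of \eqref{F1F2}, and the componentwise maximum of two classical solutions of a cooperative parabolic system is a distributional subsolution.

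First I would verify that both $\bm\Phi_1(t,x):=\bm\Phi(t,x-ct+\omega_1)$ and $\bm\Phi_2(t,x):=\bm\Phi(t,-x-ct+\omega_2)$ are classical solutions of \eqref{F1F2} on $\mathbb R\times\mathbb R$. For $\bm\Phi_1$ this is the definition of a traveling wave from \eqref{PQ}. For $\bm\Phi_2$, the chain rule applied to the reflected ansatz $\zeta=-x-ct+\omega_2$ (so $\partial_t\zeta=-c$, $\partial_x\zeta=-1$, $\partial_x^2\zeta=0$) exactly reproduces the traveling wave equation once one notes that $\partial_{xx}$ is invariant under $x\mapsto -x$; the sign-flip of $\partial_x$ cancels after squaring, and the $\partial_t$ picks up the same $-c\,\partial_z$ as before.

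Next, I decompose space-time by the open sets $\Omega_1:=\{P_1>P_2\}$ and $\Omega_2:=\{P_2>P_1\}$. On $\Omega_1$ the maximum is smooth, with $\underline u=P_1$ and $\underline v=\max\{Q_1,Q_2\}\ge Q_1$. The crucial point is that \eqref{F1F2} is cooperative on $[0,1]^2$: $f_v=a_1pN_1u\ge 0$ and $l_u=b_2qN_2(1-v)\ge 0$. Hence
\[
\mathcal F_1(t,\underline u,\underline v)=(P_1)_t-(P_1)_{xx}-f(t,P_1,\underline v)=f(t,P_1,Q_1)-f(t,P_1,\underline v)\le 0,
\]
using $\underline v\ge Q_1$ and monotonicity of $f$ in $v$. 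A symmetric computation on $\Omega_2$, and an analogous argument for $\mathcal F_2$ using $l_u\ge 0$, yield the classical subsolution inequality off the crossing sets $\{P_1=P_2\}\cup\{Q_1=Q_2\}$, which have measure zero. Note that the strict inequalities $0<P,Q<1$ along the wave keep all arguments of $f,l$ in the cooperative regime $[0,1)^2$.

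Finally, I would promote these pointwise inequalities to a distributional inequality on all of $\mathbb R\times\mathbb R$ via Kato's identity: $\max\{a,b\}=\tfrac12(a+b+|a-b|)$ for smooth $a,b$ has distributional Laplacian dominating the pointwise maximum of the classical Laplacians, the excess being a nonnegative measure supported on $\{a=b\}$. Testing $\mathcal F_i(t,\underline u,\underline v)$ against a nonneg $\phi\in C_c^\infty(\mathbb R\times\mathbb R)$, integrating by parts, and combining the classical inequality on $\Omega_1\cup\Omega_2$ with the favorable sign of the singular term gives $\iint\phi\,\mathcal F_i(t,\underline u,\underline v)\,dx\,dt\le 0$ for $i=1,2$. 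I expect the main obstacle to be this last distributional step: one must check that the kink contributions along $\{P_1=P_2\}$ do not interact badly with the coupling to $\underline v$ (and similarly for $\mathcal F_2$). Cooperativity is exactly what rescues the argument, as in the scalar and system constructions of \cite{morita2009,wu2010,LZZ2015}.
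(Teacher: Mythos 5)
Your argument is correct and is essentially the argument the paper has in mind: the paper states Lemma \ref{subsolution} without proof (``can be easily obtained''), the intended justification being exactly that each shifted and reflected front is an exact solution of \eqref{F1F2} and that, by cooperativity ($f_v=b_1qu\ge 0$, $l_u=a_2p(1-v)\ge 0$ on the relevant range), the componentwise maximum is a subsolution in the distributional sense, the kink contributing a nonnegative measure to $\underline{u}_{xx}$ and $\underline{v}_{xx}$, hence with the favorable sign. One minor imprecision: the distributional Laplacian of $\max\{a,b\}$ dominates the a.e.\ Laplacian of the \emph{active} branch (not the pointwise maximum of the two Laplacians), which is in fact what your $\Omega_1/\Omega_2$ computation uses; note also that since $P_z,Q_z>0$ both components switch along the single line $x=\frac{\omega_2-\omega_1}{2}$, so $\bm{\underline{w}}$ coincides with one exact solution on each side, which simplifies the bookkeeping.
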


\begin{remark}\label{su-su}\rm
We can easily see that $\bm{0}<\bm{\underline{w}}(t,x)
\leq\bm{\overline{W}}(t,x)<\bm{1}$ for any $(t,x)\in\mathbb R\times\mathbb R.$
\end{remark}

\section{\bf Entire solutions }

In this section, we establish the existence and some qualitative properties of
entire solutions using the comparison argument coupled with super- and subsolutions method.

\begin{theorem}\label{existence}
Let $\bm\Phi(t,z)=(P(t,z),Q(t,z))$ be a traveling wave solution of \eqref{uv-trans-eq}
satisfying (C1) with $c<0.$
Then for any $(\omega_1,\omega_2)\in(-\infty,\varpi]^2,$ system \eqref{uv-trans-eq}
admits an entire solution
$\bm{W}_{\omega_1,\omega_2}(t,x)
=(U_{\omega_1,\omega_2}(t,x),V_{\omega_1,\omega_2}(t,x))$
satisfying $\bm{0}<\bm{W}_{\omega_1,\omega_2}(t,x)<\bm{1}$. Moreover, the following statements are valid:
\begin{description}
\item[(i)]
$\bm{W}_{\omega_1,\omega_2}(t+T,x)=\bm{W}_{\omega_1,\omega_2}(t,x)$
or
$\bm{W}_{\omega_1,\omega_2}(t+T,x)>\bm{W}_{\omega_1,\omega_2}(t,x)$
for any $(t,x)\in\mathbb R^2.$

\item[(ii)]
\begin{equation*}
\begin{aligned}
\lim_{t\to-\infty}
&\left\{\sup_{x\geq 0}
{\left|\bm{W}_{\omega_1,\omega_2}(t,x)-\bm\Phi(t,x-ct+\omega_1)\right|}\right .\\
&+\left .\sup_{x\leq0}
{\left|\bm{W}_{\omega_1,\omega_2}(t,x)-\bm\Phi(t,-x-ct+\omega_2)\right|}
\right\}
=0.
\end{aligned}
\end{equation*}

\item[(iii)]
$\mathop{\lim}\limits_{k\to+\infty}\mathop{\sup}\limits_{(t,x)\in[0,T]\times\mathbb R}
\left|\bm{W}_{\omega_1,\omega_2}(t+kT,x)-\bm{1}
\right|=0.$

\item[(iv)]
$\mathop{\lim}\limits_{k\to-\infty}\mathop{\sup}\limits_{(t,x)\in[0,T]\times[a,b]}
\left|\bm{W}_{\omega_1,\omega_2}(t+kT,x)\right|=0$ for any $a,b\in\mathbb{R}$ with $a<b$.

\item[(v)] $\mathop{\lim}\limits_{|x|\to+\infty}\mathop{\sup}\limits_{t\in[t_0,+\infty)}
\left|\bm{W}_{\omega_1,\omega_2}(t,x)-\bm{1}\right|=0$ for any $t_0\in\mathbb R.$

\item[(vi)]
$\bm{W}_{\omega_1,\omega_2}(t,x)$ is monotone increasing w.r.t.
$\omega_1$ and $\omega_2$ for any $(t,x)\in\mathbb R\times\mathbb R.$

\item[(vii)]
$\bm{W}_{\omega,\omega}(t,x)=\bm{W}_{\omega,\omega}(t,-x)$
for any $\omega\in(-\infty,\varpi]$ and $(t,x)\in\mathbb R\times\mathbb R.$

\item[(viii)]
For any $(\omega_1,\omega_2)$, $(\omega_1^*,\omega_2^*)\in(-\infty,\varpi]^2$, if $\frac{\omega_1^*-\omega_1+\omega_2^*-\omega_2}{-2cT}\in\mathbb Z,$
then there exists $(t_0,x_0)\in\mathbb R\times\mathbb R$ such that
$\bm{W}_{\omega_1^*,\omega_2^*}(\cdot,\cdot)=\bm{W}_{\omega_1,\omega_2}(\cdot+t_0,\cdot+x_0)$
on $\mathbb R\times\mathbb R.$

\end{description}
\end{theorem}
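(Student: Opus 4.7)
My plan is to obtain the entire solution as a monotone limit of solutions to a sequence of Cauchy problems, and then read off the nine qualitative properties directly from the sub/supersolution sandwich. For each integer $n \geq 1$, let $\bm{W}_n(t,x)$ denote the unique classical solution of \eqref{F1F2} on $[-n,\infty)\times\mathbb{R}$ with initial data $\bm{W}_n(-n,\cdot) = \bm{\underline{w}}(-n,\cdot;\omega_1,\omega_2)$. By Lemma \ref{sub-super method} together with Lemmas \ref{super-solu} and \ref{subsolution} and Remark \ref{su-su}, one has $\bm{\underline{w}}(t,x)\leq \bm{W}_n(t,x)\leq \bm{\overline{W}}(t,x)$ on $[-n,0]\times\mathbb{R}$ and $\bm{0}<\bm{W}_n<\bm{1}$ on $[-n,\infty)\times\mathbb{R}$. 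Interior parabolic estimates (Schauder) give uniform $C^{1+\theta/2,2+\theta}_{\rm loc}$ bounds, so a diagonal extraction yields an entire classical solution $\bm{W}_{\omega_1,\omega_2}(t,x)$ satisfying $\bm{\underline{w}}\leq \bm{W}_{\omega_1,\omega_2}\leq \bm{\overline{W}}$ on $(-\infty,0]\times\mathbb{R}$ and $\bm{0}<\bm{W}_{\omega_1,\omega_2}<\bm{1}$ everywhere.

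For (ii), which is the heart of the matter, I use the estimate \eqref{100} that $0\leq j_i(t)-(-ct+\omega_i)\leq R_0 e^{-c\nu_3 t}$ for $t\leq 0$ (recall $c<0$). Together with the uniform continuity of $\bm\Phi$ in $z$ and the exponential decay of $P(t,z),Q(t,z)$ as $z\to-\infty$ (Theorem \ref{-infinity}), this forces the product term in $\bm{\overline{W}}$ to vanish and $\bm{\overline{W}}-\bm{\underline{w}}\to \bm 0$ uniformly in the stated half-planes as $t\to-\infty$; squeezing gives (ii). For (i), I compare $\bm{W}_{\omega_1,\omega_2}(t,x)$ and $\bm{W}_{\omega_1,\omega_2}(t+T,x)$: both solve \eqref{F1F2} (which is $T$-periodic), and thanks to (ii) the difference tends to $\bm 0$ uniformly as $t\to-\infty$, so the strong maximum principle applied to their difference yields either identity or strict ordering.

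Properties (iii)--(v) are obtained from the sandwich inequality combined with the bistable structure. Since the subsolution $\bm{\underline{w}}(t,x)$ already dominates $\bm\Phi(t,x-ct+\omega_1)\to\bm 1$ and $\bm\Phi(t,-x-ct+\omega_2)\to\bm 1$ as $t\to+\infty$ (each front invades; $c<0$ makes them propagate outward on the right/left respectively), (iii) and (v) follow from $\bm 0<\bm{W}_{\omega_1,\omega_2}<\bm 1$ and monotonicity of $\bm{W}_{\omega_1,\omega_2}(t+kT,x)$ in $k$ from (i); the limit must be $\bm 1$ since $(1,1)$ is the only $T$-periodic solution above $\bm{\underline{w}}$. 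Symmetrically, (iv) follows from $\bm{W}_{\omega_1,\omega_2}\leq\bm{\overline{W}}$ and $\bm{\overline{W}}(t+kT,x)\to\bm 0$ locally in $x$ as $k\to-\infty$ (because $j_1(t+kT)\to-\infty$).

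Finally, (vi)--(viii) are comparison or symmetry arguments. For (vi), if $\omega_i\leq\omega_i^*$ then $\bm{\underline{w}}(t,x;\omega_1,\omega_2)\leq\bm{\underline{w}}(t,x;\omega_1^*,\omega_2^*)$, so at the approximating level the corresponding $\bm{W}_n$'s are ordered; passing to the limit and invoking the strong maximum principle gives strict monotonicity. For (vii), when $\omega_1=\omega_2=\omega$, the initial data $\bm{\underline{w}}(-n,x;\omega,\omega)$ is invariant under $x\to-x$, hence so is $\bm{W}_n$, and thus so is $\bm{W}_{\omega,\omega}$. For (viii), write $\omega_1^*-\omega_1+\omega_2^*-\omega_2=-2cTm$ for some $m\in\mathbb Z$; I would show that both sides of the claimed identity coincide at $t=-\infty$ after the shift $(t,x)\mapsto (t+mT, x+x_0)$ with a suitable $x_0=\tfrac{1}{2}(\omega_1^*-\omega_1-\omega_2^*+\omega_2)$ using (ii), and then invoke the uniqueness argument from (i) to propagate the equality to all $(t,x)$. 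The main obstacle I anticipate is verifying (ii) with the right uniformity, since the error $j_i(t)-(-ct+\omega_i)$ is exponentially small only as $t\to-\infty$ while the fronts are evaluated at arguments that grow large; this is exactly where the a priori exponential decay rates from Theorem \ref{-infinity} and the estimates in Lemma \ref{basic-estimate} become indispensable.
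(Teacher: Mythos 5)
Your construction of the entire solution and your treatment of (ii)--(vii) follow essentially the paper's route: Cauchy problems started from the subsolution of Lemma \ref{subsolution}, the sandwich $\bm{\underline{w}}\leq\bm{W}_n\leq\bm{\overline{W}}$ for $t\leq 0$ from Lemmas \ref{sub-super method} and \ref{super-solu}, parabolic estimates plus diagonal extraction, and for (ii) precisely the estimate $0\leq U-P(t,x-ct+\omega_1)\leq M_1e^{\nu_3 j_2(t)}+\sup|P_z|\cdot|j_1(t)-(-ct+\omega_1)|$ coming from \eqref{P-} and \eqref{100}. The genuine gap is in your (i), and it propagates into your (viii). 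You want to compare the entire solutions $\bm{W}(t,\cdot)$ and $\bm{W}(t+T,\cdot)$ by noting that their difference tends to $\bm{0}$ as $t\to-\infty$ and then applying the strong maximum principle; but the strong maximum principle needs a sign of the difference (an ordering on some time slice or globally), and ``difference $\to\bm 0$ at $t=-\infty$'' supplies no sign. It also cannot be upgraded to one directly, because the gap between the shifted fronts $\bm\Phi(t,x-ct-cT+\omega_1)-\bm\Phi(t,x-ct+\omega_1)$ is not bounded below in $x$, so the $o(1)$ errors in (ii) cannot be absorbed without an additional stability/squeezing argument (of the kind used in Theorem \ref{convegence} with the correctors $\delta\bm{p}\,e^{-\beta_1 t}$), which you do not provide. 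The paper avoids this entirely: since $c<0$ and $\bm\Phi_z>\bm 0$, one has $\bm{\underline{w}}(t+T,x)>\bm{\underline{w}}(t,x)$, hence at the approximating level $\bm{w}^n(t+T,x)\geq\bm{w}^n(t,x)$ by the comparison principle and the semigroup property (here it matters that the initial times are $-nT$, multiples of the period, so the time-$T$ shift is governed by the same solution map); the ordering passes to the limit, and only then does the strong maximum principle give the dichotomy ``identical or strictly ordered.'' Your (viii) suffers the same defect: ``both sides agree at $t=-\infty$'' plus a maximum principle is not a uniqueness argument for entire solutions. The paper instead checks the exact identity $\bm{\underline{w}}(-nT,x;\omega_1^*,\omega_2^*)=\bm{\underline{w}}(-n'T,x+x_0;\omega_1,\omega_2)$ with $n'=n-k^*$, so the approximating solutions coincide exactly and equality survives the limit --- another place where starting at $-nT$ rather than $-n$ is what makes the argument go through.

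Two smaller points. First, strict inequality $\bm{W}_{\omega_1,\omega_2}<\bm{1}$ for $t>0$ does not follow from $\bm{W}_n<\bm{1}$ after passing to the limit; the paper proves it by a strong maximum principle contradiction (if $U=1$ or $V=1$ at some $(\hat t,\hat x)$ with $\hat t>0$, then $1-U$ or $1-V$ vanishes identically for $t\leq\hat t$, contradicting $\bm{W}\leq\bm{\overline{W}}<\bm{1}$ on $(-\infty,0]\times\mathbb{R}$), and you should include this step. Second, in (iii) and (v) you do not need (and have not justified) the Liouville-type claim that $(1,1)$ is the only $T$-periodic state above $\bm{\underline{w}}$: since $c<0$, $\bm{\underline{w}}(t+kT,x)\to\bm{1}$ uniformly in the relevant sets, so the sandwich $\bm{\underline{w}}\leq\bm{W}\leq\bm{1}$ alone suffices, exactly as in the paper.
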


\begin{proof}
For any $(\omega_1,\omega_2)\in(-\infty,\varpi]^2$ and $n\in\mathbb N^+,$
consider the following initial value problem
\begin{equation}\label{un-vn}
\begin{cases}
u_t^n=u_{xx}^n+f(t,u^n,v^n), \quad (t,x)\in (-nT,+\infty)\times \mathbb R,\\
v_t^n=dv_{xx}^n+l(t,u^n,v^n), \quad (t,x)\in (-nT,+\infty)\times \mathbb R,\\
u^n(-nT,x)=\underline{u}(-nT,x), \quad x\in \mathbb R,\\
v^n(-nT,x)=\underline{v}(-nT,x), \quad x\in \mathbb R,
\end{cases}
\end{equation}
where $\bm{\underline{w}}(t,x)=\bm{\underline{w}}(t,x;\omega_1,\omega_2)$
is defined in Lemma \ref{subsolution} for any $(t,x)\in\mathbb R\times\mathbb R.$
It then follows from \cite{lunardi1995} that problem \eqref{un-vn}
is well posed and the maximum principle holds.
For any $\bm{0}\leq\bm{w}_0(\cdot)\leq\bm{1},$
let $\bm{w}(t,x;\bm{w}_0)=(u(t,x;\bm{w}_0),v(t,x;\bm{w}_0))$
be the unique classical solution of \eqref{uv-trans-eq} defined in
$(t,x)\in[0,+\infty)\times\mathbb R$
with initial value $\bm{w}(0,x;\bm{w}_0)=\bm{w}_0(x).$
Then the unique classical solution
$\bm{w}^n(t,x)=\bm{w}^n(t,x;\omega_1,\omega_2)=(u^n(t,x),v^n(t,x))$ of \eqref{un-vn}
can be written as
$$\bm{w}^n(t,x)=\bm{w}(t+nT,x;\bm{\underline{w}}(-nT,\cdot))
\text{~~for any }(t,x)\in[-nT,+\infty)\times\mathbb R.$$
It then follows from the comparison principle that
\begin{equation*}
\bm{w}^{n+1}(-nT,x)
=\bm{w}(T,x;\bm{\underline{w}}(-(n+1)T,\cdot))
\geq\bm{\underline{w}}(-nT,x)=\bm{w}^n(-nT,x)
\text{~~for any } x\in\mathbb R.
\end{equation*}
In view of Lemmas \ref{sub-super method} and \ref{super-solu},
we see that
\begin{equation*}
\begin{cases}
\bm{\underline{w}}(t,x)\leq\bm{w}^n(t,x)\leq\bm{w}^{n+1}(t,x)
\leq\bm{1},~~t\geq-nT,~~x\in\mathbb R,\\
\bm{\underline{w}}(t,x)
\leq\bm{w}^n(t,x)
\leq\bm{\overline{W}}(t,x),~~t\in(-nT,0],~~x\in\mathbb R,
\end{cases}
\end{equation*}
where
$\bm{\overline{W}}(t,x)$
is defined in Lemma \ref{super-solu}.
By using the standard parabolic estimates and a diagonal extraction process,
there exists a subsequence $\{\bm{w}^{n_k}(t,x)\}_{k\in N}$
and a function $\bm{w}(t,x)$
such that
$\bm{w}^{n_k}(t,x),~\frac{\partial}{\partial t}\bm{w}^{n_k}(t,x)$ and
$\frac{\partial^2}{\partial {x^2}}\bm{w}^{n_k}(t,x)$ converge to
$\bm{w}(t,x),~\frac{\partial}{\partial t}\bm{w}(t,x)$ and
$\frac{\partial^2}{\partial {x^2}}\bm{w}(t,x)$ uniformly in any compact subset
$D\subset\mathbb R\times\mathbb R$ as $k\to+\infty~(n_k\to+\infty),$ respectively. Clearly, $\bm{w}(t,x)$ is well-defined in $(t,x)\in\mathbb{R}^2$ and from the equations satisfied by $\bm{w}^{n_k}(t,x),$ we conclude that
$\bm{W}_{\omega_1,\omega_2}(t,x):=\bm{w}(t,x)$
is an entire solution of \eqref{uv-trans-eq}.
Furthermore, there hold
\begin{equation}\label{uv-estimate}
\begin{cases}
\bm{\underline{w}}(t,x;\omega_1,\omega_2)\leq\bm{W}_{\omega_1,\omega_2}(t,x)\leq\bm{1},
~~x\in\mathbb R,~~t\in\mathbb R,\\
\bm{\underline{w}}(t,x;\omega_1,\omega_2)\leq\bm{W}_{\omega_1,\omega_2}(t,x)
\leq\bm{\overline{W}}(t,x),~~x\in\mathbb R,~~t\in(-\infty,0],
\end{cases}
\end{equation}
which together with Remark \ref{su-su} shows that
$\bm{W}_{\omega_1,\omega_2}(t,x)>\bm{0}$ for any $(t,x)\in\mathbb R\times\mathbb R$
and $\bm{W}_{\omega_1,\omega_2}(t,x)<\bm{1}$ for any $(t,x)\in(-\infty,0]\times\mathbb R.$
Now we prove $\bm{W}_{\omega_1,\omega_2}(t,x)<\bm{1}$
for any $(t,x)\in(0,+\infty)\times\mathbb R.$

Suppose that there exists some $(\hat{t},\hat{x})\in(0,+\infty)\times\mathbb R$
such that $U_{\omega_1,\omega_2}(\hat{t},\hat{x})=1$ or $V_{\omega_1,\omega_2}(\hat{t},\hat{x})=1.$
Let $(u(t,x),v(t,x))=(1-U_{\omega_1,\omega_2}(t,x),1-V_{\omega_1,\omega_2}(t,x)),$
then $u(\hat{t},\hat{x})=0$ or $v(\hat{t},\hat{x})=0.$
Note that $f_v,l_u\geq0,$ there hold
$$
\left[\int_0^1{f_u(t,\tau+(1-\tau)U_{\omega_1,\omega_2},
\tau+(1-\tau)V_{\omega_1,\omega_2})d\tau}\right]u
+u_{xx}-u_t\leq0
$$
and
$$
\left[\int_0^1{l_v(t,\tau+(1-\tau)U_{\omega_1,\omega_2},
\tau+(1-\tau)V_{\omega_1,\omega_2})d\tau}\right]v
+dv_{xx}-v_t\leq0,
$$
it then follows from the (strong) maximum principle that
$u(t,x)=0$ or $v(t,x)=0$
for any $(t,x)\in(-\infty,\hat{t}]\times\mathbb R,$ which contradicts to
$(u(t,x),v(t,x))>(0,0)$ for any $(t,x)\in(-\infty,0]\times\mathbb R.$
Therefore, $\bm{0}<\bm{W}_{\omega_1,\omega_2}(t,x)<\bm{1}$ for any
$(t,x)\in\mathbb R\times\mathbb R.$

\rm\textbf{(i)}
Since for any $(t,x)\in\mathbb R\times\mathbb R,$ there is
\begin{align*}
\bm{\underline{w}}(t+T,x)
&=\mathop{\max}\left\{\bm\Phi(t+T,x-ct-cT+\omega_1),\bm\Phi(t+T,-x-ct-cT+\omega_2)\right\}\\
&=\mathop{\max}\left\{\bm\Phi(t,x-ct-cT+\omega_1),\bm\Phi(t,-x-ct-cT+\omega_2)\right\}\\
&>\mathop{\max}\left\{\bm\Phi(t,x-ct+\omega_1),\bm\Phi(t,-x-ct+\omega_2)\right\}\\
&=\bm{\underline{w}}(t,x),
\end{align*}
it follows from the uniqueness of solutions and the comparison principle that
\begin{equation}\label{T-increasing}
\begin{aligned}
\bm{w}^n(t+T,x)
&=\bm{w}(t+nT+T,x;\bm{\underline{w}}(-nT,\cdot))
=\bm{w}(t+nT,x;\bm{w}(T,x;\bm{\underline{w}}(-nT,\cdot)))\\
&\geq\bm{w}(t+nT,x;\bm{\underline{w}}(T-nT,\cdot))
\geq\bm{w}(t+nT,x;\bm{\underline{w}}(-nT,\cdot))
=\bm{w}^n(t,x)
\end{aligned}
\end{equation}
for any $(t,x)\in[-nT,+\infty)\times\mathbb R.$
By taking the subsequence $\{\bm{w}^{n_k}(t,x)\}_{k\in N}$ in \eqref{T-increasing}
and letting $k\to+\infty,$ we have
$\bm{W}_{\omega_1,\omega_2}(t+T,x)\geq \bm{W}_{\omega_1,\omega_2}(t,x)$
for any $(t,x)\in\mathbb R\times\mathbb R.$
Let $\bm{W}^T_{\omega_1,\omega_2}(t,x):=\bm{W}_{\omega_1,\omega_2}(t+T,x),$
we further claim that there holds $$ \bm{W}^T_{\omega_1,\omega_2}>\bm{W}_{\omega_1,\omega_2} \text{ or } \bm{W}^T_{\omega_1,\omega_2}\equiv\bm{W}_{\omega_1,\omega_2} \text{ for all } (t,x)\in\mathbb {R}^2.$$
If the former is not true, i.e., there exists $(\tilde{t},\tilde{x})\in\mathbb R^2$ such that
$$ U^T_{\omega_1,\omega_2}(\tilde{t},\tilde{x})=U_{\omega_1,\omega_2}(\tilde{t},\tilde{x}) \text{ or }
V^T_{\omega_1,\omega_2}(\tilde{t},\tilde{x})=V_{\omega_1,\omega_2}(\tilde{t},\tilde{x}),$$
the (strong) maximum principle then implies that
$$U^T_{\omega_1,\omega_2}\equiv U_{\omega_1,\omega_2} \text{ or }
V^T_{\omega_1,\omega_2}\equiv V_{\omega_1,\omega_2} \text{ for any }
(t,x)\in(-\infty,\tilde{t}]\times\mathbb R,$$
 which together with the fact that
$f_v, ~l_u>0$ for any $(t,u,v)\in\mathbb R\times(0,1)\times(0,1)$ shows that
$(U^T_{\omega_1,\omega_2}(t,x),V^T_{\omega_1,\omega_2}(t,x))
=(U_{\omega_1,\omega_2}(t,x),V_{\omega_1,\omega_2}(t,x))$ for any
$(t,x)\in(-\infty,\tilde{t}]\times\mathbb R.$
It then follows from the comparison principle that $\bm{W}^T_{\omega_1,\omega_2}(t,x)=\bm{W}_{\omega_1,\omega_2}(t,x)$
for any $(t,x)\in\mathbb R\times\mathbb R.$
This leads to the claim.

\rm\textbf{(ii)}
For any $x\geq 0,$ by \eqref{P-} and \eqref{100}, there exists some $R_1>0$ such that
\begin{equation*}
\begin{aligned}
 0&\leq U_{\omega_1,\omega_2}(t,x)-P(t,x-ct+\omega_1)\\
  &\leq P(t,-x+j_2(t))+P(t,x+j_1(t))-P(t,x-ct+\omega_1)\\
  &\leq M_1e^{\nu_3(-x+j_2(t))}+\mathop{\sup}\limits_{(t,z)\in[0,T]\times\mathbb R}
  \left|{P_z(t,z)}\right|\cdot\left|{j_1(t)-(-ct+\omega_1)}\right|\\
  &\leq M_1e^{\nu_3j_2(t)}+R_1e^{-c\nu_3 t}
  \text{ for any } t<0,
\end{aligned}
\end{equation*}
and note that $j_2(t)\rightarrow-\infty$ as $t\rightarrow-\infty$, then
$$\mathop{\lim}\limits_{t\to-\infty}\mathop{\sup}\limits_{x\geq0}
{|U_{\omega_1,\omega_2}(t,x)-P(t,x-ct+\omega_1)|}=0.$$
The remaining parts can be treated in a similar way.

\rm\textbf{(iii)}-\rm\textbf{(v)} can be easily verified using \eqref{uv-estimate},
and \rm\textbf{(vi)} follows from
$\frac{\partial}{\partial z}\bm\Phi(t,z)>\bm{0}$ and $\bm{0}<\bm\Phi(t,z)<\bm{1}$
for any $(t,x)\in\mathbb R\times\mathbb R.$

\rm\textbf{(vii)}
Noting that
$\bm{\underline{w}}(t,x;\omega_1,\omega_2)=\bm{\underline{w}}(t,-x;\omega_1,\omega_2)$
for all $\omega_1=\omega_2=\omega\in(-\infty,\varpi]$ and $(t,x)\in\mathbb R\times\mathbb R,$
which together with the comparison principle shows that
$\bm{w}^n(t,x)=\bm{w}^n(t,-x)$ for any $n\in\mathbb N^+$ and
$(t,x)\in[-nT,+\infty)\times\mathbb R.$
It then follows that
$\bm{W}_{\omega,\omega}(t,x)=\bm{W}_{\omega,\omega}(t,-x)$
for any $\omega\in(-\infty,\varpi]$ and $(t,x)\in\mathbb R\times\mathbb R.$

\rm\textbf{(viii)}
Denote
$k^*:=\frac{\omega_1^*-\omega_1+\omega_2^*-\omega_2}{-2cT}$. If $k^*\in\mathbb Z,$ let
$t_0=k^*T$ and $x_0=\frac{\omega_1^*-\omega_1+\omega_2-\omega_2^*}{2},$
then
$$
\bm{\underline{w}}(t,x;\omega_1^*,\omega_2^*)=\bm{\underline{w}}(t+t_0,x+x_0;\omega_1,\omega_2)
\text{~~for any } (t,x)\in\mathbb R\times\mathbb R.
$$
For any $x\in\mathbb R$ and $n\in\mathbb N^+$ with $n^{\prime}:=n-k^*>0,$
we further have
$$\bm{\underline{w}}(-nT,x;\omega_1^*,\omega_2^*)
=\bm{\underline{w}}(-nT+t_0,x+x_0;\omega_1,\omega_2)
=\bm{\underline{w}}(-n^{\prime}T,x+x_0;\omega_1,\omega_2),
$$
and hence $\bm{w}^n(t,x;\omega_1^*,\omega_2^*)=\bm{w}^{n^{\prime}}(t+t_0,x+x_0;\omega_1,\omega_2)$
for any $(t,x)\in[-nT,+\infty)\times\mathbb R.$
Note that $n\to+\infty$ if and only if $n^{\prime}\to+\infty,$
then we get that
$\bm{W}_{\omega_1^*,\omega_2^*}(\cdot,\cdot)=\bm{W}_{\omega_1,\omega_2}(\cdot+t_0,\cdot+x_0)$
on $\mathbb R\times\mathbb R.$ The proof is complete.
\end{proof}

In order to study the convergence of the entire solution
$\bm{W}_{\omega_1,\omega_2}(t,x)$ as $\omega_1,\omega_2\to-\infty,$
we first introduce a pair of sub- and supersolutions constructed in \cite[Lemma 3.4]{bao2013}.

Let
$\zeta$ be a smooth function satisfying
$\zeta(x)=0$ for $x\leq-2,$ $\zeta(x)=1$ for $x\geq2,$
$0\leq\zeta^\prime(x)\leq1$ and $\left|\zeta^{\prime\prime}(x)\right|\leq1.$
Define $\bm{p}(x,t)=(p_1(x,t),p_2(x,t))$ as
\begin{equation*}
\begin{aligned}
p_1(x,t)=\zeta(x)\phi_1(t)+(1-\zeta(x))\phi_0(t),\\
p_2(x,t)=\zeta(x)\psi_1(t)+(1-\zeta(x))\psi_0(t),
\end{aligned}
\end{equation*}
where $(\phi_i(t),\psi_i(t))~(i=0,1)$ satisfy
\begin{equation}\label{phi-psi-0}
\begin{cases}
\phi_0^\prime(t)=f_u(t,0,0)\phi_0(t)+\lambda_0\phi_0(t),\\
\psi_0^\prime(t)=l_u(t,0,0)\phi_0(t)+l_v(t,0,0)\psi_0(t)+\lambda_0\psi_0(t),\\
\phi_0(t+T)=\phi_0(t),~\psi_0(t+T)=\psi_0(t)
\end{cases}
\end{equation}
and
\begin{equation}\label{phi-psi-1}
\begin{cases}
\phi_1^\prime(t)=f_u(t,1,1)\phi_1(t)+f_v(t,1,1)\psi_1(t)+\lambda_1\phi_1(t),\\
\psi_1^\prime(t)=l_v(t,1,1)\psi_1(t)+\lambda_1\psi_1(t),\\
\phi_1(t+T)=\phi_1(t),~\psi_1(t+T)=\psi_1(t),
\end{cases}
\end{equation}
respectively. Note that, if we take
$\lambda_0=-\overline{f_u(t,0,0)}>0$ and $\lambda_1=-\overline{l_v(t,1,1)}>0,$
then (A3) ensures that
$(\phi_i(t),\psi_i(t))~(i=0,1)$ exist and are strictly positive and T-periodic functions (see \cite[section 3]{bao2013}).
According to Bao and Wang \cite[Lemma 3.4]{bao2013},
there exist some positive constants $\beta_1,~\sigma_0$ and $\delta_0$ such that
for any $\delta\in(0,\delta_0),~\sigma_1\geq\sigma_0$ and $\xi^\pm\in\mathbb R$
the functions $\bm{w}^\pm(t,x)=(u^\pm(t,x),v^\pm(t,x))$ defined on $t>0$ by
\begin{equation*}
\bm{w}^{\pm}(t,x)=\bm\Phi(t,x-ct+\xi^{\pm}\pm\sigma_1\delta(1-e^{-\beta_1t}))\pm
\delta\bm{p}(x-ct+\xi^{\pm}\pm\sigma_1\delta(1-e^{-\beta_1t}),t)e^{-\beta_1t}
\end{equation*}
are super- and subsolutions of the following auxiliary
cooperative system, respectively:
\begin{equation}\label{auxi-sys}
\begin{cases}
u_t=u_{xx}+F(t,u,v),~~t\in\mathbb R^+,~x\in\mathbb R,\\
v_t=dv_{xx}+L(t,u,v),~~t\in\mathbb R^+,~x\in\mathbb R,\\
(u(x,0),v(x,0))=(u_0(x),v_0(x))\in\mathcal C,~x\in\mathbb R,
\end{cases}
\end{equation}
where
$\mathcal C:=\{\bm{w}\in C(\mathbb R,\mathbb R\times\mathbb R)
:-\bm{1}\leq\bm{w}(x)\leq\bm{2}\},$ and
$$F(t,u,v)=f(t,u,v)+L_1\{u\}^-v,~L(t,u,v)=l(t,u,v)+L_2\{1-v\}^-(u-1)$$
with
$L_1=\mathop{\max}\limits_{t\in[0,T]}\{b_1(t)q(t)\},
~L_2=\mathop{\max}\limits_{t\in[0,T]}\{a_2(t)p(t)\}$
and $\{w\}^-:=\max\left\{-w,0\right\}.$

\begin{theorem}\label{convegence}
For any $(\omega_1,\omega_2)\in(-\infty,\varpi]^2,$ the following convergence hold
in the sense of locally in $(t,x)\in\mathbb R\times\mathbb R:$
\begin{equation*}
\bm{W}_{\omega_1,\omega_2}(t,x)
\text{ converges to }
\left\{{\begin{array}{*{20}{c}}
\begin{aligned}
&\bm\Phi(t,x-ct+\omega_1) ~\text{ as } \omega_2\to-\infty,\\
&\bm\Phi(t,-x-ct+\omega_2) ~\text{ as } \omega_1\to-\infty, \\
&\bm{0} ~\text{ as } \omega_1\to-\infty \text{ and }\omega_2\to-\infty.
\end{aligned}
\end{array}} \right.
\end{equation*}
\end{theorem}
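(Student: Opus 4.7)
The plan is to prove each of the three convergences by a monotone limit argument, combined with the translation identity in Theorem \ref{existence}(viii) and the uniqueness of periodic bistable traveling fronts established in \cite{bao2013}.

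For the first convergence, as $\omega_2\to-\infty$ with $\omega_1$ fixed, I would define
$$
\bm{W}^{\ast}(t,x):=\lim_{\omega_2\to-\infty}\bm{W}_{\omega_1,\omega_2}(t,x),
$$
which exists pointwise since $\bm{W}_{\omega_1,\omega_2}$ is monotone increasing in $\omega_2$ by Theorem \ref{existence}(vi). Interior parabolic estimates show that $\bm{W}^{\ast}$ is an entire classical solution of \eqref{uv-trans-eq}, and Dini's theorem promotes the convergence to be locally uniform. The crucial step is to recognize $\bm{W}^{\ast}$ as a time-periodic traveling front of speed $c$: applying Theorem \ref{existence}(viii) with $\omega_2':=\omega_2-2cT$ (admissible once $\omega_2$ is sufficiently negative since $-2cT>0$), the prescription in that proof yields $(t_0,x_0)=(T,cT)$ and hence $\bm{W}_{\omega_1,\omega_2-2cT}(t,x)=\bm{W}_{\omega_1,\omega_2}(t+T,x+cT)$; passing $\omega_2\to-\infty$ gives $\bm{W}^{\ast}(t+T,x+cT)=\bm{W}^{\ast}(t,x)$, so $\bm{W}^{\ast}(t,x)=\bm{\Psi}^{\ast}(t,x-ct)$ for some $T$-periodic $\bm{\Psi}^{\ast}$.

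I would then identify the spatial asymptotics of $\bm{W}^{\ast}$. The lower bound $\bm{W}_{\omega_1,\omega_2}\geq\bm{\underline{w}}\geq\bm\Phi(t,x-ct+\omega_1)$ survives in the limit and, combined with $\bm{W}^{\ast}\leq\bm{1}$, gives $\bm{W}^{\ast}(t,x)\to\bm{1}$ as $x\to+\infty$; on the other side, Theorem \ref{existence}(iv) yields $\bm{W}^{\ast}(t+kT,x)\to\bm{0}$ as $k\to-\infty$ uniformly for $x$ in compact intervals, and the periodicity $\bm{W}^{\ast}(t+kT,x)=\bm{W}^{\ast}(t,x-kcT)$ with $-kcT\to-\infty$ (since $c<0$) then forces $\bm{W}^{\ast}(t,y)\to\bm{0}$ as $y\to-\infty$. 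Hence $\bm{W}^{\ast}$ is a $T$-periodic bistable traveling front with speed $c$, and the uniqueness theorem of \cite{bao2013} implies $\bm{W}^{\ast}(t,x)=\bm\Phi(t,x-ct+\omega_1^{\ast})$ for some $\omega_1^{\ast}\in\mathbb{R}$. The lower bound already forces $\omega_1^{\ast}\geq\omega_1$ by strict monotonicity of $\bm\Phi$ in its third slot. To secure $\omega_1^{\ast}\leq\omega_1$, I would invoke the supersolution from Lemma \ref{super-solu}: for $t\leq 0$,
$$
\bm\Phi(t,x-ct+\omega_1^{\ast})=\bm{W}^{\ast}(t,x)\leq\bm{\overline{W}}(t,x;\omega_1,\omega_2),
$$
and since $j_2(t)\to-\infty$ while $\tilde{j}_1(t;\omega_1)$ is independent of $\omega_2$, the right-hand side converges locally uniformly to $\bm\Phi(t,x+\tilde{j}_1(t;\omega_1))$ as $\omega_2\to-\infty$. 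Strict monotonicity then yields $\omega_1^{\ast}\leq ct+\tilde{j}_1(t;\omega_1)\leq\omega_1+R_0 e^{-c\nu_3 t}$ by \eqref{100}, and sending $t\to-\infty$ eliminates the error term and proves $\omega_1^{\ast}\leq\omega_1$.

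For the second convergence, as $\omega_1\to-\infty$, I would use the built-in reflection symmetry $\bm{W}_{\omega_1,\omega_2}(t,x)=\bm{W}_{\omega_2,\omega_1}(t,-x)$, which descends from the obvious symmetry of $\bm{\underline{w}}$ and $\bm{\overline{W}}$ under $(x,\omega_1,\omega_2)\mapsto(-x,\omega_2,\omega_1)$ through uniqueness of the Cauchy problem; the result then reduces to the first case. For the third convergence, as $\omega_1,\omega_2\to-\infty$ simultaneously, the monotone limit $\bm{W}^{\ast\ast}$ is sandwiched between $\bm{\underline{w}}\to\bm{0}$ and $\bm{\overline{W}}(t,x;\omega_1,\omega_2)\to\bm{0}$ on $(-\infty,0]\times\mathbb{R}$, since both $\tilde{j}_1(t;\omega_1)$ and $j_2(t)$ diverge to $-\infty$; hence $\bm{W}^{\ast\ast}\equiv\bm{0}$ for $t\leq 0$, and uniqueness of the forward Cauchy problem with zero initial datum propagates this to all of $\mathbb{R}\times\mathbb{R}$. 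The main obstacle will be the sharpness step above---showing $\omega_1^{\ast}=\omega_1$---which must extract a tight identification from comparison with the supersolution $\bm{\overline{W}}$ that is not itself sharp; the argument hinges on the explicit rate \eqref{100} describing how $\tilde{j}_1(t;\omega_1)$ approaches $-ct+\omega_1$ as $t\to-\infty$.
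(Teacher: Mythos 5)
Your proposal is correct, but it takes a genuinely different route from the paper. The paper never identifies the limit as a traveling front: after extracting the monotone limit and sandwiching it between $\bm\Phi(t,x-ct+\omega_1)$ and $\bm\Phi(t,x+\tilde{j_1}(t;\omega_1))$ for $t\leq 0$, it fixes $t_1<0$, uses $\tilde{j_1}(t;\omega_1)-(-ct+\omega_1)\to 0$ as $t\to-\infty$ to trap the limit within $\delta p_*$ of the shifted front at some $t_2<t_1$, and then evolves forward with the Fife--McLeod-type super/subsolutions $\bm{w}^{\pm}$ of the auxiliary system \eqref{auxi-sys} (Bao--Wang's Lemma 3.4, built from \eqref{phi-psi-0}--\eqref{phi-psi-1}), obtaining at $t_1$ an error of size $\delta\left(\sigma_1\sup|P_z|+p^*\right)$ and letting $\delta\to 0$. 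You instead argue by rigidity: the translation identity of Theorem \ref{existence}(viii) with $(t_0,x_0)=(T,cT)$ shows the monotone limit is itself a $T$-periodic traveling front of speed $c$, the surviving two-sided bounds give its limits $\bm 0$ and $\bm 1$, the uniqueness theorem of \cite{bao2013} identifies it with $\bm\Phi(t,x-ct+\omega_1^{\ast})$, and \eqref{100} pins $\omega_1^{\ast}=\omega_1$; both proofs thus rest on the same trapping input \eqref{100}, and you trade the stability machinery for the front-uniqueness theorem, which the paper's direct squeezing avoids invoking. Two small points you should make explicit: first, the limit $\bm{W}^{\ast}$ inherits property (iv) of Theorem \ref{existence} not by passing to the limit in that statement (the limits $k\to-\infty$ and $\omega_2\to-\infty$ do not commute for free) but via the sandwich $\bm 0\leq\bm{W}^{\ast}\leq\bm{W}_{\omega_1,\omega_2}$ for one fixed admissible $\omega_2$, or more directly via the surviving bound $\bm{W}^{\ast}(t,x)\leq\bm\Phi(t,x+\tilde{j_1}(t;\omega_1))$ for $t\leq 0$, which gives the profile estimate $\bm\Psi^{\ast}(t,z)\leq\bm\Phi(t,z+\omega_1+R_0)$ and hence the limit $\bm 0$ as $z\to-\infty$ uniformly in $t$; second, the uniqueness you cite from \cite{bao2013} must apply to a front that is not known a priori to be monotone in $z$ and requires the limits at $\pm\infty$ to be uniform in $t$ (which the bounds above supply), so you should state the precise version being used. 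Your reductions for the other two limits (the reflection identity $\bm{W}_{\omega_1,\omega_2}(t,x)=\bm{W}_{\omega_2,\omega_1}(t,-x)$ via uniqueness of the Cauchy problem, and the sandwich to $\bm 0$ on $t\leq 0$ propagated forward by uniqueness) are fine and match what the paper dismisses as ``similar''.
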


\begin{proof}
Only the assertion that
$\bm{W}_{\omega_1,\omega_2}(t,x)$ converges to
$\bm\Phi(t,x-ct+\omega_1)$ locally in $(t,x)\in\mathbb R\times\mathbb R$
as $\omega_2\to-\infty$ will be proved here,
since the others can be discussed similarly.
Taking a sequence $\left\{(\omega_1,\omega_2^k)\right\}_{k\in\mathbb Z}$ with
$(\omega_1,\omega_2^k)\in(-\infty,\varpi]^2$ such that
$\omega_2^{k+1}<\omega_2^k<\omega_1$ for any $k\in\mathbb Z$ and
$\omega_2^k\to-\infty$ as $k\to+\infty.$
According to Theorem \ref{existence},
there exist entire solutions
$\bm{W}_{\omega_1,\omega_2^k}(t,x)$ of \eqref{uv-trans-eq}
such that for any $(t,x)\in(-\infty,0]\times\mathbb R$ and $k\in\mathbb Z,$
\begin{align*}
\bm\Phi(t,x-ct+\omega_1)
&\leq\max\left\{\bm\Phi(t,x-ct+\omega_1),\bm\Phi(t,-x-ct+\omega_2^{k+1})\right\}\\
&\leq\bm{W}_{\omega_1,\omega_2^{k+1}}(t,x)
\leq\bm{W}_{\omega_1,\omega_2^k}(t,x)\\
&\leq\bm\Phi(t,x+\tilde{j_1}(t;\omega_1))+\bm\Phi(t,-x+\tilde{j_2}(t;\omega_1,\omega_2^k)).
\end{align*}
Then there exists a function $\bm{W}(t,x)=(U(t,x),V(t,x))$ such that
$\bm{W}_{\omega_1,\omega_2^k}(t,x)$ converges to $\bm{W}(t,x)$
locally in $(t,x)\in\mathbb R\times\mathbb R$ as $k\to+\infty.$
Moreover,
\begin{equation*}
\bm\Phi(t,x-ct+\omega_1)\leq\bm{W}(t,x)\leq\bm\Phi(t,x+\tilde{j_1}(t;\omega_1))
\text{ for any } (t,x)\in(-\infty,0]\times\mathbb R.
\end{equation*}
Fix any $t_1<0,$ let
\begin{equation*}
\eta:=\max\left\{\sup\limits_{x\in\mathbb R}
\left|U(t_1,x)-P(t_1,x-ct_1+\omega_1)\right|,
\sup\limits_{x\in\mathbb R}
\left|V(t_1,x)-Q(t_1,x-ct_1+\omega_1)\right|\right\},
\end{equation*}
and denote
\begin{align*}
&p_*=\min\left\{\mathop{\inf}\limits_{x\in\mathbb R,t\in(0,T)}p_1(x,t),
\mathop{\inf}\limits_{x\in\mathbb R,t\in(0,T)}p_2(x,t)\right\}>0,\\
&p^*=\max\left\{\mathop{\sup}\limits_{x\in\mathbb R,t\in(0,T)}p_1(x,t),
\mathop{\sup}\limits_{x\in\mathbb R,t\in(0,T)}p_2(x,t)\right\}>0.
\end{align*}
Recall that $\tilde{j_1}(t;\omega_1)-(-ct+\omega_1)\to0$ as $t\to-\infty,$
then there is some $t_2<t_1$ such that
for any $\delta\in(0,\delta_0],$ we have
\begin{equation*}
\bm\Phi(t_2,x-ct_2+\omega_1)
\leq\bm{W}(t_2,x)
\leq\bm\Phi(t_2,x-ct_2+\omega_1)+\delta p_* \quad\text{for any } x\in\mathbb R.
\end{equation*}
By comparison principle,
we see that for any $(t,x)\in[t_2,+\infty)\times\mathbb R,$
\begin{align*}
&\bm\Phi(t,x-ct+\omega_1)\\
&\leq\bm{W}(t,x)\\
&\leq\bm\Phi(t,x-ct+\omega_1+\xi(t-t_2))+
\delta\bm{p}(x-ct+\omega_1+\xi(t-t_2),t)e^{-\beta_1(t-t_2)},
\end{align*}
where $\xi(t)=\sigma_1\delta(1-e^{-\beta_1t})$
with $\sigma_1$ large enough.
Then it follows that for any $x\in\mathbb R,$
\begin{equation*}
\begin{aligned}
&\bm\Phi(t_1,x-ct_1+\omega_1)\\
&\leq\bm{W}(t_1,x)\\
&\leq\bm\Phi(t_1,x-ct_1+\omega_1+\xi(t_1-t_2))+
\delta\bm{p}(x-ct_1+\omega_1+\xi(t_1-t_2),t)e^{-\beta_1(t_1-t_2)}\\
&\leq\bm\Phi(t_1,x-ct_1+\omega_1+\sigma_1\delta)+\delta p^*,
\end{aligned}
\end{equation*}
which further implies that
\begin{equation*}
0\leq\bm{W}(t_1,x)-\bm\Phi(t_1,x-ct_1+\omega_1)
\leq\delta\left(\sigma_1\cdot\mathop{\sup}\limits_{(t,z)\in[0,T]\times\mathbb R}
  \left|{P_z(t,z)}\right|+p^*\right)
\text{ for any } x\in\mathbb R.
\end{equation*}
Noting that $\delta\in(0,\delta_0]$ is arbitrary, it then follows that
$\eta=0,$ which further implies that $\bm{W}_{\omega_1,\omega_2}(t,x)$ converges to $\bm\Phi(t,x-ct+\omega_1)$ locally in $(t,x)\in\mathbb R\times\mathbb R$ as $\omega_2\to-\infty.$
\end{proof}

\begin{proof}[\textbf{Proof of Theorem \ref{ES}}]
For any given contants $\theta_1,\theta_2\in\mathbb R$, there exists $n^*\in \mathbb{Z}$ such that
$$\omega_1:=\theta_1+cn^*T\in(-\infty,\varpi] \text{ and }
\omega_2:=\theta_2+cn^*T\in(-\infty,\varpi].$$
Then by Theorem \ref{existence}, system \eqref{uv-trans-eq} admits an entire solution $\bm{W}_{\omega_1,\omega_2}(t,x)$ defined on $\mathbb{R}^2$.
Let
$
\bm{W}_{\theta_1,\theta_2}(\cdot,\cdot)
=(U_{\theta_1,\theta_2}(\cdot,\cdot),V_{\theta_1,\theta_2}(\cdot,\cdot))
:=\bm{W}_{\omega_1,\omega_2}(\cdot+n^*T,\cdot).
$
Moreover, we have
\begin{align*}
\bm{W}_{\theta_1,\theta_2}(t,x)&=\bm{W}_{\omega_1,\omega_2}(t+n^*T,x)\\
&\geq\bm{\underline{w}}(t+n^*T,x;\omega_1,\omega_2)\\
&=\mathop{\max}\left\{\bm\Phi(t+n^*T,x-c(t+n^*T)+\omega_1),\bm\Phi(t+n^*T,-x-c(t+n^*T)+\omega_2)\right\}\\
&=\mathop{\max}\left\{\bm\Phi(t,x-ct+\theta_1),\bm\Phi(t,-x-ct+\theta_2)\right\},
\end{align*}
which implies the last convergence of \rm\textbf{(vi)} in Theorem \ref{ES}.
Other statements in Theorem \ref{ES} can be easily verified by virtue of Theorems
\ref{existence} and \ref{convegence}.
\end{proof}

\begin{proof}[\textbf{Proof of Theorem \ref{ES-c>0}}]
Consider the case $c>0.$
Assume that $\bm\Psi(t,z)=\bm\Psi(t,x-ct)$
is an increasing traveling wave solution
of \eqref{uv-trans-eq} satisfying
$\bm\Psi(t,-\infty)=\bm{0} \text{ and } \bm\Psi(t,+\infty)=\bm{1}.$
Let $\tilde{c}=-c<0$ and define
$$\bm{\widetilde{\Psi}}(t,z)=\bm{\widetilde{\Psi}}(t,x-\tilde{c}t)
=\bm{1}-\bm\Psi(t,-(x+ct)),$$
then
$\bm{\widetilde{\Psi}}(t,-\infty)=\bm{0},$
$\bm{\widetilde{\Psi}}(t,+\infty)=\bm{1}$ and
$\frac{\partial}{\partial z}\bm{\widetilde{\Psi}}(t,z)>\bm{0}.$
It is not difficult to verify that
$\bm{\widetilde{\Psi}}(t,z)=(\widetilde{P}(t,z),\widetilde{Q}(t,z))$ is a
traveling wave solution of the following system
\begin{equation}\label{uv-c>0}
\begin{cases}
\tilde{u}_t=\tilde{u}_{xx}+(1-\tilde{u})[b_1q\tilde{v}-a_1p\tilde{u}],\\
\tilde{v}_t=d\tilde{v}_{xx}+\tilde{v}[b_2q(1-\tilde{v})-a_2p(1-\tilde{u})],
\end{cases}
\end{equation}
and we know from (C2) that
$\frac{\widetilde{Q}(t,z)}{\widetilde{P}(t,z)}\geq\eta_1$
for any $(t,z)\in\mathbb R\times(-\infty,0].$
Similar to the argument for system \eqref{uv-trans-eq},
we know that for any $\theta_1,\theta_2\in\mathbb R,$
system \eqref{uv-c>0} admits an entire solution
$\bm{0}<\bm{W}(t,x)<\bm{1}$ satisfying
$\bm{W}(t+T,x)=\bm{W}(t,x)$ or $\bm{W}(t+T,x)>\bm{W}(t,x)$
for any $(t,x)\in\mathbb R\times\mathbb R,$
and
\begin{equation*}
\begin{aligned}
\lim_{t\to-\infty}
&\left\{\sup_{x\geq 0}
{\left|\bm{W}(t,x)-\bm{\widetilde\Psi}(t,x-\tilde{c}t-\theta_1)\right|}\right .\\
&+\left .\sup_{x\leq 0}
{\left|\bm{W}(t,x)-\bm{\widetilde\Psi}(t,-x-\tilde{c}t-\theta_2)\right|}
\right\}
=0.
\end{aligned}
\end{equation*}
Define
$\bm{\widetilde{W}}_{\theta_1,\theta_2}(t,x)=\bm{1}-\bm{W}(t,x),$
then $\bm{\widetilde{W}}_{\theta_1,\theta_2}(t,x)$
is an entire solution of system \eqref{uv-trans-eq} which satisfies
\begin{align*}
\lim_{t\to-\infty}
&\left\{\sup_{x\geq 0}
{\left|\bm{\widetilde{W}}_{\theta_1,\theta_2}(t,x)-\bm\Psi(t,-x-ct+\theta_1)\right|}\right .\\
&+\left .\sup_{x\leq 0}
{\left|\bm{\widetilde{W}}_{\theta_1,\theta_2}(t,x)-\bm\Psi(t,x-ct+\theta_2)\right|}
\right\}
=0.
\end{align*}
Furthermore, we can see that the other assertions in Theorem \ref{ES-c>0} are valid.
\end{proof}

\section*{Acknowledgments}

W.T. Li was partially supported by NSF of China (11671180) and FRFCU (lzujbky-2016-ct12).
J.B. Wang was partially supported by FRFCU (lzujbky-2016-226).

\end{document}